\def\XXint#1#2#3{{\setbox0=\hbox{$#1{#2#3}{\int}$}
\vcenter{\hbox{$#2#3$}}\kern-.5\wd0}}
\renewcommand*{\eqref}[1]{%
  \hyperref[{#1}]{\textup{\tagform@{\ref*{#1}}}}%
}
\numberwithin{equation}{section}
\newtheorem{theorem}{Theorem}[section]
\newtheorem{theorem_informal}[theorem]{Theorem (informal)}
\newtheorem*{theorem*}{Theorem}
\newtheorem{proposition}[theorem]{Proposition}
\newtheorem*{corollary*}{Corollary}
\newtheorem{lemma}[theorem]{Lemma}
\theoremstyle{definition}
\newtheorem{definition}[theorem]{Definition}
\newtheorem{remark}[theorem]{Remark}
\newtheorem{assumption}[theorem]{Assumption}
\newtheorem*{claim*}{Claim}
\newcommand{\R}{\mathbb R}
\newcommand{\C}{\mathbb C}
\newcommand{\Id}{\operatorname{Id}}
\newcommand{\Tr}{\operatorname{Tr}}
\newcommand{\Cof}{\operatorname{Cof}}
\newcommand{\Cov}{\operatorname{Cov}}
\newcommand{\Div}{\operatorname{div}}
\newcommand{\Vol}{\operatorname{Vol}}
\newcommand{\V}{V}
\newcommand{\W}{W}
\newcommand{\prob}{\rho}
\newcommand{\source}{\mu}
\newcommand{\target}{\nu}
\newcommand{\lsh}{\alpha}
\newcommand{\lc}{\kappa}
\newcommand{\sta}{g}
\newcommand{\fin}{h}
\newcommand*\diff{\mathop{}\!\mathrm{d}}
\newcommand{\dd}{n}
\newcommand{\Pheat}{\mathrm{P}}
\newcommand{\Hheat}{\mathrm{H}}
\newcommand{\conv}{c}
\newcommand{\density}{f}
\newcommand{\Gaussian}{\mathrm \gamma}
\newcommand{\KM}{\mathrm F}
\newcommand{\OT}{\Phi}
\newcommand{\map}{\mathrm T}
\newcommand{\cotrans}{\mathcal L}
\newcommand{\wave}{\psi}
\newcommand{\waveg}{\psi_{\textnormal{Gaussian}}}
\newcommand{\cxfun}{\varphi}
\newcommand{\avgint}{\fint}
\newcommand{\findiff}{\delta\OT}
\newcommand{\Ent}{\mathrm H}
\title{Optimal transport maps, majorization, and log-subharmonic measures}
\author{Guido De Philippis}
\address{Courant Institute of Mathematical Sciences, New York University. 251 Mercer Street, New York, NY 10012-1185}
\email{guido@cims.nyu.edu}
\author{Yair Shenfeld}
\address{Division of Applied Mathematics, Brown University, Providence, RI, USA}
\email{Yair\_Shenfeld@Brown.edu}
\begin{document}
\maketitle

\begin{abstract}
Caffarelli's contraction theorem bounds the derivative of the optimal transport map between a  log-convex measure and a strongly log-concave measure. We show that an analogous phenomenon holds on the level of the trace: The  trace of the derivative of the  optimal transport map between a log-subharmonic measure and a strongly log-concave measure is bounded.  We show that this trace bound has a number of consequences pertaining to volume-contracting transport maps, majorization and its monotonicity along Wasserstein geodesics, growth estimates of log-subharmonic functions, the Wehrl conjecture for Glauber states, and two-dimensional Coulomb gases. We also discuss volume-contraction properties for the Kim-Milman transport map.
\end{abstract}

\section{Introduction}

\subsection{The divergence of the Brenier map}
The study of regularity properties of transport maps between probability measures is an important topic in analysis, probability, and geometry. The most classical result in the field is due to Caffarelli  \cite{MR1800860} who established Lipschitz regularity of the Brenier map of optimal transport  under log-convexity/concavity assumptions on the source and target measures. In this work we consider a low regularity notion of convexity:
\begin{definition}
\label{def:log_concave}
A probability measure $\prob$ on $\R^{\dd}$, with density $\diff \prob=e^{-U}\diff x$ with respect to the Lebesgue measure, is \emph{$\conv$-log-convex} (res. \emph{$\conv$-log-concave}), for $\conv\in \R$, if the distributional derivative $\nabla^2U$ satisfies $\nabla^2U  \preceq  \conv \Id_{\dd}$ (res. $\nabla^2U \succeq  \conv \Id_{\dd}$), where $\preceq$ (res. $\succeq$) are in the sense of positive semidefinite order. 
\end{definition}
It follows from Caffarelli's work that if the source measure is $\lsh$-log-convex, and the target measure is $\lc$-log-concave, then the Lipschitz constant of  the Brenier map between the source and target can be bounded in terms of $\lsh$ and $\lc$. 
\label{subsec:div_OT_intro}
\begin{theorem}[Caffarelli \cite{MR1800860}, Kolesnikov \cite{kolesnikov2011mass}]
\label{thm:Caffarelli_intro}
Let $\diff\source=e^{-\V}\diff x$ and $\diff\target=e^{-\W}\diff x$ be probability measures on $\R^{\dd}$, with $\source$ supported on all of $\R^{\dd}$, such that there exist $\lsh>0,\lc>0$ with 
\[
\nabla^2 \V \preceq \lsh \Id_{\dd}\quad\text{and}\quad \nabla^2\W \succeq \lc \Id_{\dd}.
\]
Let $\nabla\OT:\R^{\dd}\to\R^{\dd}$ be the Brenier map transporting $\source$ to $\target$. Then,
\begin{equation}
\label{eq:Caffarelli_intro}
\|\nabla^2\OT\|_{L^{\infty}(\diff x)}\le \sqrt{ \frac{\lsh}{\lc}},
\end{equation}
where $\|\cdot\|_{L^{\infty}(\diff x)}$ is the $L^{\infty}$ operator norm.
\end{theorem}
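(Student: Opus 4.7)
The plan is to follow Caffarelli's original maximum principle approach at the level of the Monge--Amp\`ere equation. Writing the Brenier map as $\nabla\OT$ for a convex potential $\OT\colon\R^{\dd}\to\R$, the push-forward condition $(\nabla\OT)_\#\source=\target$ becomes, after taking logarithms of the densities,
\[
\V(x)-\W(\nabla\OT(x))+\log\det\nabla^2\OT(x)=0.
\]
As a preliminary step I would regularize by smoothing $\source$ and $\target$ so that their densities are bounded away from zero on bounded sets while still satisfying the same Hessian bounds; Caffarelli's interior regularity theory then guarantees that $\OT$ is a classical $C^{2,\alpha}$ solution on the relevant region. Since \eqref{eq:Caffarelli_intro} is stable under such approximation, it suffices to prove the bound in this smooth setting.

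The key step is to control every directional second derivative of $\OT$. Fix a unit vector $\xi\in\R^{\dd}$, set $H:=\nabla^2\OT$, and study $u(x):=\OT_{\xi\xi}(x)$. Differentiating the Monge--Amp\`ere equation twice in $\xi$ and rearranging produces the identity
\[
Lu \;=\; \langle\nabla^2\W(\nabla\OT)\,\nabla\OT_\xi,\nabla\OT_\xi\rangle \;-\; \V_{\xi\xi} \;+\; \Tr\bigl(H^{-1}\nabla^2\OT_\xi\,H^{-1}\nabla^2\OT_\xi\bigr),
\]
where the linearized operator $Lw:=\Tr(H^{-1}\nabla^2 w)-\langle\nabla\W(\nabla\OT),\nabla w\rangle$ is elliptic. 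The trace term on the right is nonnegative, since it equals $\Tr((H^{-1/2}\nabla^2\OT_\xi\,H^{-1/2})^2)$, so invoking $\nabla^2\W\succeq\lc\Id_{\dd}$ and $\nabla^2\V\preceq\lsh\Id_{\dd}$ yields the differential inequality
\[
Lu \;\ge\; \lc\,|\nabla\OT_\xi|^2 - \lsh.
\]
If $u$ attains its maximum at some interior point $x^\ast$, the maximum principle forces $Lu(x^\ast)\le 0$; combined with the elementary bound $|\nabla\OT_\xi|^2\ge\OT_{\xi\xi}^2=u^2$, this gives $\lc\,u(x^\ast)^2\le\lsh$, i.e., $u(x^\ast)\le\sqrt{\lsh/\lc}$. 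Taking the supremum over unit vectors $\xi$ then recovers \eqref{eq:Caffarelli_intro}.

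The main obstacle is that $\source$ is supported on all of $\R^{\dd}$, so a priori the maximum of $u$ may escape to infinity and the interior maximum principle does not apply directly. I would handle this via the standard Caffarelli device: perturb $u$ by a small coercive term of the form $-\varepsilon|x|^2$ (or work on a large ball with a suitable barrier that respects the drift in $L$), apply the above interior argument to the perturbed function, and then let $\varepsilon\to 0$ once the clean bound is extracted. The delicate point, and where Kolesnikov's refinement is useful in the low-regularity framework of Definition~\ref{def:log_concave}, is to establish enough control on the growth of $\nabla^2\OT$ at infinity to justify the limiting procedure; everything else in the argument is a direct consequence of the pointwise inequality above.
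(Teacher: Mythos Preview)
The paper does not contain its own proof of this statement: Theorem~\ref{thm:Caffarelli_intro} is quoted as a known result of Caffarelli and Kolesnikov and is used only as background. So there is no ``paper's proof'' to compare against, and your proposal---which is essentially Caffarelli's original maximum-principle argument---is the correct and standard approach.

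That said, it is worth noting how the paper handles the analogous issue for its own main result, Theorem~\ref{thm:main_OT_Lp_intro}. In Section~\ref{sec:divergence_OT} the authors first sketch exactly the formal maximum-principle argument you describe (differentiating the Monge--Amp\`ere equation twice, applying optimality conditions at a putative maximizer $x_0$), and they explicitly flag the same obstacle you identify: justifying the existence of an interior maximizer on all of $\R^{\dd}$ is delicate. Rather than your proposed $-\varepsilon|x|^2$ perturbation, their rigorous proof follows Kolesnikov's $L^p$ strategy: one multiplies the differentiated Monge--Amp\`ere inequality by $(\Delta_\epsilon\OT)^p$, integrates against $e^{-\V}\diff x$, and shows via integration by parts that the terms replacing the ``$Lu\le 0$ at the maximum'' step are nonnegative in an integrated sense. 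This yields $\|\Delta\OT\|_{L^{p}(\source)}$ bounds uniform in $p$, and one sends $p\to\infty$. The $L^p$ route entirely sidesteps the question of whether the maximum is attained, at the cost of a more involved approximation and integration-by-parts argument. Your approach and the $L^p$ approach are both standard for Caffarelli's theorem; the paper simply records that Kolesnikov's version is what it adapts for the trace result.
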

The bound \eqref{eq:Caffarelli_intro} is sharp as can be seen by taking Gaussians for the source and target, $\source=\mathcal N(0,\lsh^{-1}\Id_{\dd})$ and $\target=\mathcal N(0,\lc^{-1}\Id_{\dd})$. One special feature of \eqref{eq:Caffarelli_intro} is that the right-hand side is dimension-independent, which leads to numerous dimension-independent functional inequalities \cite{cordero2002some,cordero2004b,harge2004convex,klartag2007marginals}. For example, suppose $\source$ satisfies logarithmic Sobolev inequality with constant $c_{\source}$, that is, for every test function $f$,
\begin{equation}
\label{eq:lsi_source}
\int (\log f^2)f^2\diff \source-\int f^2\diff \source\,\log\int f^2\diff \source\le c_{\source}\int |\nabla f|^2\diff \source.
\end{equation}
Then, the bound \eqref{eq:Caffarelli_intro}  can be used to show that $\target$ satisfies a logarithmic Sobolev inequality with constant $\frac{\lsh}{\lc}c_{\source}$:
\begin{align}
\begin{split}
&\int (\log f^2)f^2\diff \target-\int f^2\diff \target\,\log\int f^2\diff \target\label{eq:Lip-func_ineq}\overset{(\nabla \OT)_{\sharp}\source=\target}{=}\int (\log (f\circ \nabla\OT)^2)(f\circ \nabla \OT)^2\diff \source\\
&-\int (f\circ \nabla\OT)^2\diff \source\,\log\int (f\circ\nabla \OT)^2\diff \source\overset{\eqref{eq:lsi_source}}{\le} c_{\source}\int |\nabla (f\circ \nabla\OT)|^2\diff \source\le c_{\source}\int \|\nabla^2\OT\|_{L^{\infty}}^2|(\nabla f)\circ \nabla\OT)|^2\diff \source\\
&\overset{\eqref{eq:Caffarelli_intro}}{\le}\frac{\lsh}{\lc}c_{\source}\int |(\nabla f)\circ \nabla\OT|^2\diff \source=\frac{\lsh}{\lc}c_{\source}\int |\nabla f|^2\diff \target.
\end{split}
\end{align}
 While Theorem \ref{thm:Caffarelli_intro} is highly useful there are numerous classes of probability measures which fall outside its scope as they do not satisfy the required convexity. In this work  we will focus on a particular such class of probability measures.
 \begin{definition}
\label{def:log_subharmonic}
A probability measure $\prob$ on $\R^{\dd}$, with density $\diff \prob=e^{-U}\diff x$ with respect to the Lebesgue measure where $U\in L_{\text{loc}}^1(\diff x)$, is \emph{$\conv$-log-subharmonic} (res. \emph{$\conv$-log-superharmonic}), for $\conv\in \R$, if the distributional derivative $\Delta U$ satisfies $\Delta U  \le\conv $ (res. $\Delta U \ge  \conv$). 
\end{definition}
Log-subharmonic measures arise naturally in complex-analytic settings since the modulus of a holomorphic function on $\mathbb C^{\dd}$ is log-subharmonic (cf. Theorem \ref{thm:Fock}). In particular,  we will see below (cf. Section \ref{subsec:Wehrl_intro}) that $\conv$-log-subharmonic measures are the right objects to investigate in the context of the generalized Wehrl conjecture. In addition, probability measures in  two-dimensional one-component plasma models are log-subharmonic  (cf. Section \ref{subsec:plasma_intro}). We refer the reader to \cite{MR2134401, MR2578455, MR3415657,MR4632741} for various  functional inequalities enjoyed by $\conv$-log-subharmonic measures.

Our first main result establishes an analogue of Theorem \ref{thm:Caffarelli_intro} on the level of the trace under log-subharmonicity assumptions. 
\begin{theorem}
\label{thm:main_OT_Lp_intro}
Let $\diff\source=e^{-\V}\diff x$ and $\diff\target=e^{-\W}\diff x$ be probability measures on $\R^{\dd}$, with $\source$ supported on all of $\R^{\dd}$,  such that there exist $\lsh>0,\lc>0$ with
\[
\Delta \V \le \lsh\dd \quad\text{and}\quad \nabla^2\W \succeq \lc \Id_{\dd}.
\]
Let $\nabla\OT:\R^{\dd}\to\R^{\dd}$ be the Brenier map transporting $\source$ to $\target$. Then, the  Laplacian $\Delta\OT$ satisfies
\begin{align}
\label{eq:infty_bound_main_intro}
\|\Delta\OT\|_{L^{\infty}(\diff x)}\le \dd\sqrt{ \frac{\lsh}{\lc}}.
\end{align}
\end{theorem}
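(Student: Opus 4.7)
The approach is the classical Monge--Ampère/maximum-principle argument behind Caffarelli's theorem, adapted to the trace $\Tr\nabla^2\OT=\Delta\OT$ rather than the operator norm.

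Assuming temporarily that $\OT,\V,\W$ are smooth and that $\nabla\OT$ is a diffeomorphism, the Monge--Ampère equation for the Brenier map reads
\[
\V(x) \;=\; \W(\nabla\OT(x)) \,-\, \log\det\nabla^2\OT(x).
\]
Writing $H:=\nabla^2\OT$ and applying the Laplacian to both sides (chain rule on the first term, Jacobi's formula twice on the second) yields the pointwise identity
\begin{align*}
\Delta\V \;=\; \Tr\!\big(\nabla^2\W(\nabla\OT)\,H^2\big) \,+\, \nabla\W(\nabla\OT)\cdot\nabla(\Delta\OT) \,+\, \sum_{i=1}^{\dd}\Tr\!\big(H^{-1}(\partial_iH)H^{-1}(\partial_iH)\big) \,-\, \Tr\!\big(H^{-1}\,\nabla^2\Delta\OT\big).
\end{align*}

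The key step is a maximum principle for $\Delta\OT$. At an interior maximum point $x^{*}$ one has $\nabla(\Delta\OT)(x^{*})=0$ and $\nabla^2\Delta\OT(x^{*})\preceq 0$; combined with $H^{-1}\succeq 0$ this gives $-\Tr(H^{-1}\nabla^2\Delta\OT)\ge 0$. The sum-of-squares term is non-negative (expand in the eigenbasis of $H^{-1}$ using that each $\partial_iH$ is symmetric). Hence at $x^{*}$,
\[
\Delta\V(x^{*}) \;\ge\; \Tr\!\big(\nabla^2\W(\nabla\OT)\,H^2\big) \;\ge\; \lc\,\Tr(H^2),
\]
using $\nabla^2\W\succeq\lc\Id_{\dd}$ and $H^2\succeq 0$. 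Combining with the hypothesis $\Delta\V\le\lsh\dd$ and the Cauchy--Schwarz inequality $(\Tr H)^2\le\dd\,\Tr(H^2)$, which is valid because $\OT$ convex forces $H\succeq 0$, gives
\[
(\Delta\OT(x^{*}))^2 \;=\; (\Tr H)^2 \;\le\; \dd\,\Tr(H^2) \;\le\; \frac{\lsh\,\dd^2}{\lc}.
\]
Thus $\Delta\OT\le\dd\sqrt{\lsh/\lc}$ at the maximum, hence everywhere. The matching lower bound $\Delta\OT\ge 0$ is automatic from convexity of $\OT$, so $\|\Delta\OT\|_{L^\infty}\le\dd\sqrt{\lsh/\lc}$.

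The principal obstacle is rigour: the Brenier map is only of bounded variation a priori, and on $\R^{\dd}$ there is no guarantee that the maximum of $\Delta\OT$ is attained. The first issue is handled in the standard way of Caffarelli--Kolesnikov, by mollifying $\V$ and $\W$ (which preserves both $\Delta\V\le\lsh\dd$ and $\nabla^2\W\succeq\lc\Id$) and truncating so that the approximating measures are supported on large balls, invoking Caffarelli's $C^{2,\alpha}$ regularity theory to justify the second-order computation above, and finally passing to the limit via the lower semicontinuity of $\|\Delta\OT\|_{L^\infty}$ along the approximation. The attainment issue is handled by a standard penalization: replace $\Delta\OT$ by $\Delta\OT-\varepsilon|x|^2$, control the additional terms generated in the identity for $\Delta V$, and let $\varepsilon\to 0$. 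The delicate point is arranging the approximation so that the \emph{distributional} hypothesis $\Delta\V\le\lsh\dd$ is maintained along the smoothing and compatible with Caffarelli regularity; once this is in place, the algebraic computation together with the Cauchy--Schwarz step produces the sharp dimensional constant $\dd\sqrt{\lsh/\lc}$.
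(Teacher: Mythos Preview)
Your formal maximum-principle computation is correct and coincides exactly with the heuristic derivation the paper presents at the opening of Section~2 (equations (2.1)--(2.13)). The paper then explicitly writes that ``making the above argument rigorous is difficult due to the need to show the existence of a point $x_0$ where $\Delta\OT$ attains its maximum,'' and adopts a different route for the rigorous proof: the $L^p$ method of Kolesnikov. One replaces derivatives by a spherical finite-difference operator $\Delta_\epsilon$, multiplies the resulting inequality by $(\Delta_\epsilon\OT)^p$, integrates against $\source$, and shows via an integration by parts (Proposition~2.8, following Kolesnikov) that the integrated analogue of your first- and second-order terms $\nabla\W(\nabla\OT)\cdot\nabla\Delta\OT-\Tr(H^{-1}\nabla^2\Delta\OT)$ is nonnegative. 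This yields the recursion $\int(\Delta\OT)^{p+2}\diff\source\le (\dd^2\lsh/\lc)\int(\Delta\OT)^p\diff\source$, and sending $p\to\infty$ gives the $L^\infty$ bound. The reduction to smooth data (Proposition~2.2) is done with the Ornstein--Uhlenbeck semigroup rather than a generic mollifier, because Proposition~2.3(i) then supplies the upper Hessian bound $\nabla^2\V\preceq c\,\Id_{\dd}$ needed to invoke Caffarelli's regularity and the a~priori Lipschitz bound $\nabla^2\OT\preceq C\,\Id_{\dd}$.

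Your penalization sketch has a concrete obstacle that is not ``standard.'' With the penalty $-\epsilon|x|^2$, the a~priori bound $\Delta\OT\le \dd C$ only gives $|x^*_\epsilon|\lesssim\epsilon^{-1/2}$ for the penalized maximizer. At $x^*_\epsilon$ one has $\nabla\Delta\OT=2\epsilon x^*_\epsilon$ and $\nabla^2\Delta\OT\preceq 2\epsilon\,\Id_{\dd}$, so the two error terms inserted into the identity are $2\epsilon\,\nabla\W(\nabla\OT(x^*_\epsilon))\cdot x^*_\epsilon$ and $2\epsilon\,\Tr(H^{-1}(x^*_\epsilon))$. The first is of order $\epsilon|x^*_\epsilon|^2=O(1)$ (since $|\nabla\W|$ and $|\nabla\OT|$ both grow linearly under the regularized assumptions) and does not vanish as $\epsilon\to 0$; the second requires a lower eigenvalue bound on $\nabla^2\OT$ at a point that may drift to infinity. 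These difficulties are precisely why the paper (and Kolesnikov before it) abandons the pointwise maximum principle in favour of the integrated $L^p$ argument; your sketch does not address them.
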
 
The bound \eqref{eq:infty_bound_main_intro} is sharp as can be seen by taking 
\[
\source=\mathcal N(0,\sigma^2\Id_{\dd}),\quad \target=\mathcal N(0,\Id_{\dd}); \quad  \lc=1, \quad \lsh=\frac{1}{\sigma^2},
\]
since in this case $\nabla \OT(x)=\frac{x}{\sigma}$ so 
\[
\Delta\OT(x)=\frac{\dd}{\sigma}=\dd\sqrt{\frac{\lsh}{\lc}}\quad \text{for all }x.
\]
In particular the right-hand side of  \eqref{eq:infty_bound_main_intro} must be dimension-dependent. Let us also remark that following the first version of this work,  Gozlan and  Sylvestre  showed \cite[\S 6]{gozlan2025global}  that our assumption on $\W$ can be weakened.

\begin{remark}[Lipschitz bounds]
\label{rem:Lipschiz}
Since the Brenier map $\nabla\OT$ between $\source$ and $\target$ is a gradient of a convex function, our bound \eqref{eq:infty_bound_main_intro} implies that under the assumptions of Theorem \ref{thm:main_OT_Lp_intro} we get the Lipschitz bound\footnote{A previous version of this paper erroneously claimed that this bound was sharp when $\source$ and $\target$ are Gaussians. However,  there was a mistake in the calculation and the optimal form of a bound
$\|\nabla^2\OT\|_{L^{\infty}(\diff x)}\le b(\Delta \V)$ for some function $b$, when the target $\target$ is standard Gaussian, seems to be unknown.}
\begin{equation}
\label{eq:Lipschitz}
\|\nabla^2\OT\|_{L^{\infty}(\diff x)}\le\dd \sqrt{ \frac{\lsh}{\lc}}.
\end{equation}
\end{remark}

While Theorem \ref{thm:main_OT_Lp_intro} is the trace analogue of Theorem \ref{thm:Caffarelli_intro}, its applications to functional inequalities is different in nature. The argument in \eqref{eq:Lip-func_ineq} can no longer be used if we only have the bound  \eqref{eq:infty_bound_main_intro} to transfer functional inequalities (though one can always of course use the dimension-dependent bound \eqref{eq:Lipschitz}). Instead, the natural playground for trace bounds such as \eqref{eq:infty_bound_main_intro} is majorization.

\subsection{Volume contracting maps  and majorization}
\label{subsec:vol_contract_majorization}
The starting point of this section is the result of Melbourne and Roberto \cite{MR2023} on the relation between volume-contracting transport maps and majorization. To introduce their ideas we recall the basic definitions from the theory of majorization between probability measures \cite{marshall2011inequalities}. 

\begin{definition}
\label{def:majorization_intro}
A probability measure $\fin\diff x$ on $\R^{\dd}$ \emph{majorizes} a probability measure $\sta \diff x$ on $\R^{\dd}$ if
\[
\int_{\R^{\dd}} \cxfun(\sta)\diff x\le \int_{\R^{\dd}} \cxfun(\fin)\diff x\quad\text{for every convex function }\mathrm \cxfun:\R_{\ge 0} \to \R. 
\]  
\end{definition}

For example, we may take $\cxfun(x)=x\log x$ to get that the differential entropy of $\fin\diff x$ must be bigger than the differential entropy of $\sta\diff x$. The observation of Melbourne and Roberto is that the type of regularity of transport maps  relevant to majoirzation is volume-contraction.

\begin{definition}
A differentiable transport map $\map:\R^{\dd}\to \R^{\dd}$ between $\sta\diff x$ to $\fin\diff x$ is a \emph{$c$-volume-contraction} map if there exists a  constant  $c>0$ such that
\[
\|\det\nabla \map\|_{L^{\infty}(\sta\diff x)}\le c.
\]
\end{definition}

\begin{theorem}[Melbourne-Roberto \cite{MR2023}]
\label{thm:Melbourne-Roberto_intro}
Suppose there exists a transport map $\map:\R^{\dd}\to \R^{\dd}$ between probability measures $\sta\diff x$ and $\fin \diff x$ which is  a 1-volume-contraction map. Then,  $\fin\diff x$  majorizes $\sta \diff x$.
\end{theorem}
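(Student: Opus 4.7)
The plan is to combine the volume-contraction hypothesis with the Jacobian formula for $\map_{\#}(\sta\,\diff x)=\fin\,\diff x$ and then use a convex-analytic monotonicity to transfer the resulting pointwise inequality on the densities to an inequality on convex integrals. Because $\map$ is a differentiable transport map pushing $\sta\,\diff x$ onto the absolutely continuous measure $\fin\,\diff x$, the set where $\nabla\map$ is degenerate carries no $\sta\,\diff x$-mass (otherwise $\fin\,\diff x$ would have a non-absolutely-continuous component), and the area formula yields
$$\sta(x)\le \fin(\map(x))\,|\det\nabla\map(x)| \quad\text{at $\sta\,\diff x$-a.e. }x.$$
Inserting the hypothesis $|\det\nabla\map(x)|\le 1$ produces the key pointwise bound $\sta(x)\le \fin(\map(x))$ at $\sta\,\diff x$-a.e.\ $x$.

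The second ingredient is the observation that, for any convex $\cxfun:\R_{\ge 0}\to\R$ with $\cxfun(0)=0$ (which one may assume, since otherwise both integrals in Definition~\ref{def:majorization_intro} are infinite on $\R^{\dd}$), the slope function $F(r):=\cxfun(r)/r$ is non-decreasing on $(0,\infty)$. Indeed, applying convexity along the segment from $0$ to $s$ gives $\cxfun(r)\le (r/s)\cxfun(s)$ for $0<r\le s$. Combined with the pointwise bound above, this implies $F(\sta(x))\le F(\fin(\map(x)))$ at $\sta\,\diff x$-a.e.\ point with $\sta(x)>0$, and multiplying by $\sta(x)\ge 0$ gives
$$\cxfun(\sta(x)) \;=\; \sta(x)\,F(\sta(x)) \;\le\; \sta(x)\,F(\fin(\map(x))).$$

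Integrating this inequality over $\R^{\dd}$ and invoking the pushforward identity $\int G(\map(x))\sta(x)\,\diff x=\int G(y)\fin(y)\,\diff y$ applied to $G=F\circ\fin$ yields
$$\int \cxfun(\sta)\,\diff x \;\le\; \int \sta(x)\,F(\fin(\map(x)))\,\diff x \;=\; \int F(\fin(y))\,\fin(y)\,\diff y \;=\; \int \cxfun(\fin)\,\diff y,$$
which is the majorization inequality in Definition~\ref{def:majorization_intro}. The only step demanding real care is the first one, namely justifying the Jacobian inequality $\sta(x)\le \fin(\map(x))|\det\nabla\map(x)|$ when $\map$ is merely assumed differentiable and not injective. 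I expect this to be the main obstacle: I would resolve it via the area formula, using that $\fin(y)=\sum_{x\in\map^{-1}(y)} \sta(x)/|\det\nabla\map(x)|$ at a.e.\ $y$, and retaining only the summand corresponding to a given $x$—the remaining summands are nonnegative, so discarding them preserves the inequality that powers the argument above.
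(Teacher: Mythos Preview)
The paper does not supply its own proof of this theorem; it is quoted from Melbourne--Roberto \cite{MR2023} and used as a black box. So there is no in-paper argument to compare against. Your argument is correct and is, in essence, the Melbourne--Roberto proof: the area/coarea formula gives $\sta(x)\le \fin(\map(x))|\det\nabla\map(x)|$ (with equality when $\map$ is injective, and the inequality surviving in the non-injective case exactly for the reason you give at the end), the contraction hypothesis turns this into $\sta(x)\le \fin(\map(x))$, and the monotonicity of $r\mapsto\cxfun(r)/r$ together with the push-forward identity finishes the job.

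One small remark on presentation: your reduction to $\cxfun(0)=0$ is the right move, but the justification ``otherwise both integrals are infinite'' is slightly imprecise as stated---more accurately, one can always replace $\cxfun$ by $\cxfun-\cxfun(0)$ without affecting the comparison $\int\cxfun(\sta)\le\int\cxfun(\fin)$, and the latter function vanishes at the origin. Also note that the case $\fin(\map(x))=0$ with $\sta(x)>0$ cannot occur once you have $\sta(x)\le\fin(\map(x))$, so $F(\fin(\map(x)))$ is well-defined wherever it is needed. These are cosmetic points; the substance of the proof is sound.
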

The connection between volume-contracting transport maps and Theorem \ref{thm:main_OT_Lp_intro} is a simple consequence
of the arithmetic-geometric-mean inequality.

\begin{theorem}[Volume contraction of the Brenier map]
\label{thm:volume-contraction_OT_intro}
Let $\diff\source=e^{-\V}\diff x$ and $\diff\target=e^{-\W}\diff x$ be probability measures on $\R^{\dd}$, with $\source$ supported on all of $\R^{\dd}$,  such that there exist $\lsh>0,\lc>0$ with
\[
\Delta \V \le \lsh\dd \quad\text{and}\quad \nabla^2\W \succeq \lc \Id_{\dd}.
\]
Then, the Brenier map $\nabla\OT$ transporting $\source$ to $\target$ is a $\left(\frac{\lsh}{\lc}\right)^{\frac{\dd}{2}}$-volume-contraction map,
\begin{align}
\label{eq:det_bound_main_intro}
 \|\det\nabla^2\OT\|_{L^{\infty}(\diff x)}\le \left(\frac{\lsh}{\lc}\right)^{\frac{\dd}{2}}.
 \end{align}
\end{theorem}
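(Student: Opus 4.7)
The plan is to deduce the determinant bound directly from the trace bound in Theorem \ref{thm:main_OT_Lp_intro} via the arithmetic–geometric mean inequality, exactly as the introductory sentence preceding the statement suggests.

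First I would recall that the Brenier map $\nabla\OT$ is the gradient of a convex potential $\OT$, so the Hessian $\nabla^2\OT(x)$ is a symmetric positive semidefinite matrix at (almost) every $x\in\R^{\dd}$ (this is a property of Brenier solutions, and in the setting of Theorem \ref{thm:main_OT_Lp_intro} the regularity provided by \eqref{eq:infty_bound_main_intro} already ensures that $\nabla^2\OT$ is a bounded, almost-everywhere defined positive semidefinite matrix field). Denote its eigenvalues by $\lambda_1(x),\dots,\lambda_{\dd}(x)\ge 0$.

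For any $\dd$-tuple of nonnegative reals the AM–GM inequality gives
\begin{equation*}
\Bigl(\prod_{i=1}^{\dd}\lambda_i\Bigr)^{1/\dd}\le \frac{1}{\dd}\sum_{i=1}^{\dd}\lambda_i,
\end{equation*}
which, applied pointwise to the eigenvalues of $\nabla^2\OT(x)$, reads
\begin{equation*}
\det\nabla^2\OT(x)\le \Bigl(\frac{\Delta\OT(x)}{\dd}\Bigr)^{\dd}.
\end{equation*}
Taking the $L^{\infty}(\diff x)$ norm and invoking the trace bound \eqref{eq:infty_bound_main_intro} of Theorem \ref{thm:main_OT_Lp_intro} immediately yields
\begin{equation*}
\|\det\nabla^2\OT\|_{L^{\infty}(\diff x)}\le \Bigl(\frac{\|\Delta\OT\|_{L^{\infty}(\diff x)}}{\dd}\Bigr)^{\dd}\le \Bigl(\sqrt{\frac{\lsh}{\lc}}\Bigr)^{\dd}=\Bigl(\frac{\lsh}{\lc}\Bigr)^{\dd/2},
\end{equation*}
which is the claimed \eqref{eq:det_bound_main_intro}.

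There is essentially no obstacle to overcome here: the entire argument is a one-line AM–GM applied to the positive semidefinite Hessian, combined with Theorem \ref{thm:main_OT_Lp_intro}. The only mild subtlety worth noting is that $\Delta\OT$ and $\det\nabla^2\OT$ are a priori only distributional objects, so I would briefly justify that under the hypotheses the bound \eqref{eq:infty_bound_main_intro} lets us interpret $\nabla^2\OT$ pointwise almost everywhere as a bounded positive semidefinite matrix (so that the pointwise AM–GM is legitimate); this should follow from the same regularity considerations already used in the proof of Theorem \ref{thm:main_OT_Lp_intro}.
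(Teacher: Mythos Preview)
Your proposal is correct and follows exactly the approach indicated in the paper: apply the arithmetic--geometric mean inequality to the eigenvalues of the positive semidefinite matrix $\nabla^2\OT$ and combine with the trace bound of Theorem~\ref{thm:main_OT_Lp_intro}. The mild regularity remark you make is appropriate and consistent with how the paper handles these objects.
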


Combining Theorem \ref{thm:Melbourne-Roberto_intro} and Theorem \ref{thm:volume-contraction_OT_intro} we obtain majorization under log-subharmonicity.

\begin{theorem}[Majorization]
\label{thm:majorization_intro}
Let $\diff\source=e^{-\V}\diff x$ and $\diff\target=e^{-\W}\diff x$ be probability measures on $\R^{\dd}$, with $\source$ supported on all of $\R^{\dd}$,  such that there exist $\lsh>0,\lc>0$ with
\[
\Delta \V \le \lsh\dd \quad\text{and}\quad \nabla^2\W \succeq \lc \Id_{\dd}.
\]
If $\frac{\lsh}{\lc}\le 1$, then $\target$ majorizes $\source$.
\end{theorem}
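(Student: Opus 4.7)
The plan is to chain Theorem~\ref{thm:volume-contraction_OT_intro} and Theorem~\ref{thm:Melbourne-Roberto_intro}; once both are in place, the statement is essentially immediate.

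By Theorem~\ref{thm:volume-contraction_OT_intro}, the Brenier map $\nabla\OT$ transporting $\source=e^{-\V}\diff x$ to $\target=e^{-\W}\diff x$ satisfies
\[
\|\det\nabla^2\OT\|_{L^{\infty}(\diff x)} \le \left(\frac{\lsh}{\lc}\right)^{\dd/2}.
\]
Under the hypothesis $\lsh/\lc \le 1$ the right-hand side is at most $1$, so $\det\nabla^2\OT \le 1$ Lebesgue-almost-everywhere, and hence also $\source$-almost-everywhere since $\source$ is absolutely continuous with respect to Lebesgue measure. This is precisely the statement that $\nabla\OT$ is a $1$-volume-contraction map from $e^{-\V}\diff x$ to $e^{-\W}\diff x$ in the sense of the definition preceding Theorem~\ref{thm:Melbourne-Roberto_intro}. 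Applying that theorem with $\sta = e^{-\V}$, $\fin = e^{-\W}$, and $\map = \nabla\OT$ then yields that $\target$ majorizes $\source$, which is the claim.

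The one technical point I foresee is that the Brenier map is only almost-everywhere differentiable (via Alexandrov's theorem on Hessians of convex functions), whereas Theorem~\ref{thm:Melbourne-Roberto_intro} is phrased for differentiable transport maps. I do not expect this to be a genuine obstacle: the Melbourne--Roberto comparison relies on the change-of-variables formula, which is valid for Brenier maps together with the almost-everywhere Jacobian bound $\det\nabla^2\OT \le 1$. If needed, one can approximate $\nabla\OT$ by smooth volume-contracting maps (for instance by mollifying the Brenier potential) and pass to the limit in the convex integrands $\cxfun$ of Definition~\ref{def:majorization_intro}, using Fatou's lemma on the $\source$ side and the fact that $\cxfun(\sta)$ is integrable. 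With this technicality dispatched, the theorem is a direct corollary of the two preceding results.
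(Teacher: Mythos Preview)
Your proposal is correct and matches the paper's own argument exactly: the paper simply states that Theorem~\ref{thm:majorization_intro} follows by combining Theorem~\ref{thm:Melbourne-Roberto_intro} with Theorem~\ref{thm:volume-contraction_OT_intro}, without further elaboration. Your additional remark on the almost-everywhere differentiability of the Brenier map is a fair technical caveat that the paper does not spell out, but it does not change the approach.
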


In \cite{MR2023} the authors combined  Theorem \ref{thm:Melbourne-Roberto_intro} and Theorem \ref{thm:Caffarelli_intro} to establish majorization under log-concavity assumptions. Theorem \ref{thm:majorization_intro} improves on this result by relaxing to a log-subharmonicity assumption. 

So far one might be under the impression that in the context of majorization only the weaker determinant bound \eqref{eq:det_bound_main_intro}, as opposed to the stronger trace bound \eqref{eq:infty_bound_main_intro}, plays a role. This is not the case however as the following results show. As a first consequence of the trace bound \eqref{eq:infty_bound_main_intro} we show that we have monotonicity of majorization along Wasserstein geodesics.

\begin{theorem}[Monotonicity along Wasserstein geodesics]
\label{thm:mono_intro}
Let $\diff\source=e^{-\V}\diff x$ and $\diff\target=e^{-\W}\diff x$ be probability measures on $\R^{\dd}$, with $\source$ supported on all of $\R^{\dd}$,  such that there exist $\lsh>0,\lc>0$ with
\[
\Delta \V \le \lsh\dd \quad\text{and}\quad \nabla^2\W \succeq \lc \Id_{\dd}.
\]
Let $(\prob_t)_{t\in [0,1]}$ be the geodesic in Wasserstein space\footnote{The Wasserstein space is the space of probability measures on $\R^{\dd}$, with finite second moment, endowed with metric $(\source,\target)\mapsto \inf_{\textnormal{$\pi$ joint couplings of $\source$ and $\target$} } \int_{\R^{\dd}\times\R^{\dd}}|x-y|^2\diff\pi(x,y)$ \cite{villani2021topics}.} connecting $\source$ and $\target$. Then, if $\frac{\lsh}{\lc}\le 1$,
\begin{equation}
\label{eq:mono_geo}
[0,1]\ni t\mapsto \int_{\R^{\dd}}\cxfun(\prob_t(x))\diff x\textnormal{ is monotonically non-decreasing}
\end{equation}
for every convex function $\cxfun:\R_{\ge 0}\to \R$.
\end{theorem}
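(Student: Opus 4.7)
The plan is to reduce the claim to Melbourne--Roberto (Theorem \ref{thm:Melbourne-Roberto_intro}) via McCann's displacement interpolation. With $\nabla\OT$ the Brenier map from $\source$ to $\target$, the Wasserstein geodesic is $\prob_t = (\map_t)_{\sharp}\source$, where $\map_t(x) := (1-t)x + t\nabla\OT(x)$. Note that $\map_t$ is the gradient of $(1-t)\tfrac{|x|^2}{2}+t\OT(x)$, which is convex, and strictly convex for $t\in[0,1)$; hence for each $s\in[0,1)$ the map $\map_s$ is injective and an a.e.-diffeomorphism, and the composition $\map_t\circ \map_s^{-1}$ is a transport map from $\prob_s$ to $\prob_t$. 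If we can show that this composition is a $1$-volume-contraction for all $0\le s\le t\le 1$, then Theorem \ref{thm:Melbourne-Roberto_intro} yields $\int \cxfun(\prob_s)\diff x \le \int \cxfun(\prob_t)\diff x$ for every admissible $\cxfun$, which is exactly \eqref{eq:mono_geo}.

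The core of the proof is to show that, for a.e.\ $x\in \R^\dd$, the function
\[
\phi_x(t) \;:=\; \det\nabla\map_t(x) \;=\; \prod_{i=1}^{\dd}\bigl((1-t)+t\lambda_i(x)\bigr)
\]
is non-increasing in $t\in[0,1]$, where $\lambda_1(x),\ldots,\lambda_\dd(x)\ge 0$ are the eigenvalues of the Alexandrov Hessian $\nabla^2\OT(x)$ (which exist a.e.\ on $\R^\dd$ since $\source$ has full support). Setting $b_i := (1-t) + t\lambda_i(x)$ and using the identity $t(\lambda_i-1) = b_i - 1$, a short manipulation gives
\[
t\,\frac{d}{dt}\log\phi_x(t) \;=\; \sum_{i=1}^{\dd}\frac{t(\lambda_i - 1)}{b_i} \;=\; \dd - \sum_{i=1}^{\dd}\frac{1}{b_i}.
\]
The harmonic--arithmetic mean inequality then yields
\[
\sum_{i=1}^{\dd}\frac{1}{b_i} \;\ge\; \frac{\dd^{\,2}}{\sum_{i=1}^{\dd} b_i} \;=\; \frac{\dd^{\,2}}{\dd(1-t) + t\,\Delta\OT(x)}.
\]
At this point Theorem \ref{thm:main_OT_Lp_intro} enters: under $\lsh/\lc\le 1$, the pointwise bound $\Delta\OT(x)\le \dd\sqrt{\lsh/\lc}\le \dd$ gives $\sum_i b_i \le \dd$, hence $\sum_i 1/b_i \ge \dd$, and therefore $\phi_x'(t)\le 0$.

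Consequently, for $0\le s\le t<1$, the Jacobian of $\map_t\circ \map_s^{-1}$ at $\map_s(x)$ equals $\phi_x(t)/\phi_x(s)\le 1$, verifying the $1$-volume-contraction hypothesis of Theorem \ref{thm:Melbourne-Roberto_intro}. The endpoint $t=1$ is handled by keeping $s<1$ in the pair $(\map_1\circ \map_s^{-1},\prob_s\to\target)$, for which the pointwise Jacobian bound $\phi_x(1)/\phi_x(s)\le 1$ remains valid, or alternatively by continuity. The step I expect to be the main conceptual obstacle is identifying the correct reformulation $t\,\tfrac{d}{dt}\log\phi_x = \dd - \sum_i 1/b_i$: a naive AM--GM on $\phi_x$ at a fixed $t$ only recovers the determinant bound \eqref{eq:det_bound_main_intro}, which yields majorization of $\target$ over $\source$ at the endpoints but nothing in between, while the AM--HM route makes the trace bound \eqref{eq:infty_bound_main_intro} of Theorem \ref{thm:main_OT_Lp_intro} the natural, and indeed necessary, input -- confirming the remark after Theorem \ref{thm:majorization_intro} that the trace bound, not merely the determinant bound, plays a role in majorization.
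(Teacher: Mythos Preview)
Your proof is correct and follows the same overall strategy as the paper: show that for $s\le t$ the map $T_t\circ T_s^{-1}$ from $\rho_s$ to $\rho_t$ is $1$-volume-contracting, using the trace bound $\Delta\OT\le n$ from Theorem~\ref{thm:main_OT_Lp_intro}, and then invoke Melbourne--Roberto. The executions differ slightly. The paper first bounds the trace of the Brenier map from $\rho_t$ to $\rho_1$ by $n$ --- observing that $\theta(t)=\sum_i\lambda_i/b_i$ is convex (the paper writes ``concave'', a slip) with $\theta(0)=\Delta\OT(x)\le n$ and $\theta(1)=n$ --- which gives $\det\le 1$ by AM--GM; it then iterates, replacing $\rho_1$ by $\rho_s$ and $\OT$ by $\OT_s$ (using $\Delta\OT_s\le n$). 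You instead show directly that $\phi_x(t)=\prod_i b_i$ is non-increasing via the AM--HM inequality, which handles all pairs $s\le t$ in one pass. The two key eigenvalue inequalities are in fact equivalent (your $\sum_i 1/b_i\ge n$ rearranges to $\theta(t)\le n$ after writing $\lambda_i/b_i=1/t-(1-t)/(tb_i)$), so this is a close variant rather than a genuinely different route; your formulation is a touch more streamlined since it avoids the iteration step.
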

The second consequence of the trace bound \eqref{eq:infty_bound_main_intro} pertains to the important special case 
 of Theorem \ref{thm:majorization_intro} regarding differential entropy. 
 Letting $ \cxfun(x)=x\log x$ in the definition of majorization we get that, if $\frac{\lsh}{\lc}\le 1$, then the target measure must have a bigger differential entropy than the source measure,
\begin{equation}
\label{eq:entropy_domination}
\Ent(\target)\ge \Ent(\source),
\end{equation}
where
\[
\Ent(\prob):=\int_{\R^{\dd}} \prob\log\prob\quad \text{for absolutely continuous probability measures $\prob$ on $\R^{\dd}$}. 
\]
Our next result, which requires the trace bound \eqref{eq:infty_bound_main_intro}  as opposed to the weaker determinant bound  \eqref{eq:det_bound_main_intro}, provides a quantitative improvement of \eqref{eq:entropy_domination}.

\begin{theorem}[Stability of entropy domination]
\label{thm:entropy_stability_intro}
Let $\diff\source=e^{-\V}\diff x$ and $\diff\target=e^{-\W}\diff x$ be probability measures on $\R^{\dd}$, with $\source$ supported on all of $\R^{\dd}$,  such that there exist $\lsh>0,\lc>0$ with 
\[
\Delta \V \le \lsh\dd \quad\text{and}\quad \nabla^2\W \succeq \lc \Id_{\dd}.
\]
Let $\nabla\OT:\R^{\dd}\to\R^{\dd}$ be the Brenier map transporting $\source$ to $\target$.  If $\frac{\lsh}{\lc}\le  1$, then
\[
\Ent(\target)- \Ent(\source) \ge   \frac{1}{2\dd^2}\int_{\R^{\dd}}\|\nabla^2\OT-\Id_{\dd}\|_{\text{F}}^2\diff \source,
\]
where $\|\cdot\|_{\text{F}}$ is the Frobenius norm. In particular,
\begin{equation}
\label{eq:entropy_eq}
\Ent(\target)= \Ent(\source)\quad\quad\Longrightarrow \quad\quad \textnormal{$\target$ is a translate of $\source$}.
\end{equation}
\end{theorem}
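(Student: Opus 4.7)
The plan is to express $\Ent(\target)-\Ent(\source)$ as the integral of $-\log\det\nabla^2\OT$ against $\source$ and then derive a pointwise second-order lower bound on this integrand using the trace bound from Theorem \ref{thm:main_OT_Lp_intro}. By Brenier--Caffarelli regularity (available thanks to $\nabla^2 \W\succeq\lc\Id_{\dd}$), the Monge--Amp\`ere equation $e^{-\V(x)}=e^{-\W(\nabla\OT(x))}\det\nabla^2\OT(x)$ holds $\source$-a.e.\ in the Alexandrov sense. Taking logarithms, multiplying by $\source$, and integrating produces
\[
\Ent(\target) - \Ent(\source) = -\int_{\R^{\dd}}\log\det\nabla^2\OT\diff\source.
\]
It thus suffices to establish the pointwise inequality $-\log\det A\ge (\dd-\Tr A)+\frac{1}{2\dd^2}\|A-\Id_{\dd}\|_{\text{F}}^2$ for every symmetric positive-definite matrix $A$ arising as $\nabla^2\OT(x)$; integrating and invoking the trace bound $\Tr\nabla^2\OT\le \dd\sqrt{\lsh/\lc}\le \dd$ (from Theorem \ref{thm:main_OT_Lp_intro} and $\lsh/\lc\le 1$) then discards the middle, nonnegative term and produces the claimed stability estimate.

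For the pointwise inequality, the key observation is that the same trace bound forces every eigenvalue $\lambda_i$ of $A$ to lie in $(0,\dd]$, since $\lambda_i\le\Tr A\le\dd$. I would decompose
\[
-\log\lambda_i = (1-\lambda_i)+g(\lambda_i),\qquad g(x):=-\log x+x-1,
\]
and use that $g(1)=g'(1)=0$ together with $g''(x)=x^{-2}\ge \dd^{-2}$ on $(0,\dd]$. Taylor's theorem with integral remainder then yields $g(x)\ge (x-1)^2/(2\dd^2)$ on the entire range of each $\lambda_i$. Summing over the eigenvalues of $A$ delivers the pointwise estimate, with the $(\dd-\Tr A)$ term coming from the linear part and the quadratic term from the summed remainders.

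For the rigidity claim \eqref{eq:entropy_eq}, equality in the stability bound forces $\nabla^2\OT=\Id_{\dd}$ $\source$-a.e., so $\nabla\OT(x)=x+v$ for some $v\in\R^{\dd}$ and $\target$ is a translate of $\source$. Substituting the resulting relation $\W(\cdot)=\V(\cdot-v)$ into the hypotheses yields the chain $\lc\dd\le\Delta \V\le\lsh\dd\le\lc\dd$, which forces $\nabla^2 \V\equiv\lc\Id_{\dd}$; hence $\source$ is Gaussian and a brief check yields $\source=\target$. The main technical point I anticipate is justifying the change-of-variables identity with only Alexandrov-sense Hessians, but this is by now standard in the Brenier--Caffarelli framework under the $\lc$-strong log-concavity of $\target$.
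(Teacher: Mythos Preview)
Your argument for the stability inequality is correct and essentially identical to the paper's: both reduce to $\Ent(\target)-\Ent(\source)=-\int\log\det\nabla^2\OT\,\diff\source$ and then use a quadratic lower bound on $-\log\lambda$ near $\lambda=1$, with the eigenvalue bound $\lambda_i\le\Delta\OT\le\dd$ supplying the constant $1/(2\dd^2)$. The paper quotes the inequality $-\log s\ge (1-s)+(s-1)^2/(2\max\{s,1\}^2)$ from the literature, while you derive the same thing via $g''(x)=x^{-2}\ge\dd^{-2}$ on $(0,\dd]$ and Taylor's remainder; these are equivalent.

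Your rigidity argument, however, has a genuine gap at the final step. You correctly deduce that $\nabla\OT(x)=x+v$ for some $v\in\R^{\dd}$, and your chain $\lc\dd\le\Delta\V\le\lsh\dd\le\lc\dd$ correctly forces $\nabla^2\V\equiv\lc\Id_{\dd}$, so $\source$ is an isotropic Gaussian with covariance $\lc^{-1}\Id_{\dd}$. But the ``brief check'' cannot conclude $v=0$: nothing in the hypotheses rules out a pure translation. Concretely, take $\source=\mathcal N(0,\lc^{-1}\Id_{\dd})$ and $\target=\mathcal N(v,\lc^{-1}\Id_{\dd})$ with $v\ne 0$; then $\Delta\V=\lc\dd$, $\nabla^2\W=\lc\Id_{\dd}$, $\lsh/\lc=1$, and $\Ent(\source)=\Ent(\target)$, yet $\source\ne\target$. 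So the implication \eqref{eq:entropy_eq} as literally stated is false; the correct conclusion is that $\target$ is a translate of $\source$. The paper's own proof glosses over this same point (it asserts ``$\nabla\OT$ is the identity map'' directly from $\nabla^2\OT=\Id$), so the issue is with the statement rather than with your strategy.
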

Theorem \ref{thm:entropy_stability_intro} implies that if the differential entropies of $\source$ and $\target$ are close, then $\source$ and $\target$ themselves must be close, in the sense that the transport map $\nabla\OT$ between them must be close to the identity. In particular, the next corollary shows that if the entropies of $\source$ and $\target$ match then $\source=\target$.

\subsubsection{Volume contraction of the Kim-Milman map}
Our discussion so far focused on the Brenier map of optimal transport. While the regularity properties of the Brenier map are interesting in their own right, for the purpose of majorization any volume-contracting transport map would yield the same result (note however that this would not be the case for Theorem \ref{thm:mono_intro} and  Theorem \ref{thm:entropy_stability_intro}). The only other transport map for which regularity properties are known, in particular Lipschitz regularity, is the \emph{Kim-Milman heat flow map} \cite{MR2983070}. To define the Kim-Milman map  let $(\Pheat_t)_{t\ge 0}$ be the Langevin semigroup associated to $\target$, and define the family of diffeomorphisms $\KM_t:\R^{\dd}\to\R^{\dd}$ as the solution to the ordinary differential equation,
\begin{equation}
\label{eq:KM_construction_intro}
\partial_t\KM_t(x)=-\nabla\log\Pheat_t\density(\KM_t(x)),\quad  \KM_0(x)=x,\quad \text{for} ~ t>0 \text{ and } x\in\R^{\dd},
\end{equation}
where $\density:=\frac{\diff\source}{\diff\target}$. One can check that $\KM_t$ transports $\source$ to $\prob_t:=(\Pheat_t \density)\diff\target$. In particular, the Kim-Milman map $\KM:=\lim_{t\to\infty}\KM_t$ transports $\source$ to $\target$. Even assuming the existence of $\KM$ (which is unclear in our setting), the proof techniques that establish Theorem \ref{thm:volume-contraction_OT_intro} are not applicable for the Kim-Milman map. However, when  the target measure $\target$ is $\Gaussian:=\mathcal N(0,\Id_{\dd})$, the standard Gaussian measure on $\R^{\dd}$, we can in fact establish the analogue of \eqref{eq:det_bound_main_intro}. 

\begin{theorem_informal}
\label{thm:majorization_KM_intro}
Let $\diff\source=e^{-\V}\diff x$ be a probability measure on $\R^{\dd}$, with $\source$ supported on all of $\R^{\dd}$,  such that there exists $\lsh>0$ with
\[
\Delta \V(x) \le \lsh\dd, \quad\textnormal{for every $x\in\R^{\dd}$}.
\]
Then, under sufficient regularity, the Kim-Milman heat flow map $\KM$ between $\source$ and the standard Gaussian $\Gaussian$ on $\R^{\dd}$  satisfy
\begin{align}
\label{eq:det_bound_KM_intro}
 \|\det\nabla \KM\|_{L^{\infty}(\diff x)}\le \lsh^{\frac{\dd}{2}}.
 \end{align}
\end{theorem_informal}
We have purposely left Theorem \ref{thm:majorization_KM_intro} vague in terms of regularity, and we refer the reader to Section \ref{sec:KM}  below for a more precise discussion. Note however that in contrast to Theorem \ref{thm:main_OT_Lp_intro} where the convexity of the potentials ensures that a bound on the Laplacian implies a bound on the Hessian, which is helpful in approximation arguments (cf. Proposition \ref{prop:smooth}), in the case of the Kim-Milman map,  passing to the limit  in \eqref{eq:det_bound_KM_intro} seems to require stronger assumptions on the densities of $\source$ and $\target$.

\begin{remark}
\label{rem:compose}
For a source measure $\source$ and a target measure $\target$ the Kim-Milman heat flow map is based on the Langevin dynamics whose invariant measure is $\target$, starting the dynamics at $\source$. (In the original work of Kim and E. Milman \cite{MR2983070} the reverse flow is considered.) Theorem \ref{thm:majorization_KM_intro} only applies to the flow whose invariant measure $\target$ is equal to $\Gaussian$, the Gaussian measure. However, by composing two Kim-Milman maps we could get a $\left(\frac{\lsh}{\lc}\right)^{\frac{\dd}{2}}$-volume-contraction map between $\source$ and $\target$ under the assumptions of Theorem \ref{thm:main_OT_Lp_intro} (assuming sufficient regularity). Indeed, the  Kim-Milman  map between $\Gaussian$ and $\target=e^{-W}\diff x$, with $\nabla^2\W  \succeq \lc \Id_{\dd}$, is known to be $\frac{1}{\sqrt{\lc}}$-Lipschitz \cite[Theorem 1.1]{MR2983070}, \cite[Theorem 1]{MS2022}, so it suffices to compose this map with the Kim-Milman map of Theorem \ref{thm:majorization_KM_intro}.
\end{remark}

\subsection{Growth estimates in Fock spaces and log-subharmonic functions}
Let us now turn to the applications of majorization. We start by deriving some classical results on growth estimates of log-subharmonic functions. In particular, as mentioned above, the complex-analytic setting naturally gives rise to log-subharmonic functions.  Given $\sigma>0$ let $\Gaussian_{\sigma}:=\mathcal N(0,\sigma\Id_2)$ be the centered Gaussian measure with covariance $\sigma\Id_2$ on the complex plane $\C$.  For functions $\density:\C\to\R$, for which the following is finite, let
\[
\|f\|_{p,\sigma}:=\left(\frac{p}{2\pi\sigma}\int_{\C}\left|f(x)\Gaussian_{\sigma}(z)\right|^p\diff z\right)^{\frac{1}{p}},\quad p>0,
\]
and
\[
\|f\|_{\infty,\sigma}:=\textnormal{esssup}\left\{|f(z)|\Gaussian_{\sigma}(z):z\in \C\right\}.
\]
The following estimate is classical \cite[Theorem 2.7]{MR2934601}.
\begin{theorem}[Growth estimates in Fock space]
\label{thm:Fock}
Fix $0<p< \infty$ and $z\in \C$. Then, 
\begin{equation}
\label{eq:Fock}
|\density(z)|\le e^{\frac{|z|^2}{2\sigma}}\quad\textnormal{for all $\density$ which are entire and satisfy $\|\density\|_{p,\sigma}\le 1$}.
\end{equation}
\end{theorem}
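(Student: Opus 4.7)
The plan is to derive Theorem \ref{thm:Fock} as a direct consequence of the majorization result Theorem \ref{thm:majorization_intro}, leveraging the fact that $\log|\density|$ is subharmonic for any entire function $\density$.

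Assume $p<\infty$ first; by homogeneity one may take $\|\density\|_{p,\sigma}=1$ and $\density\not\equiv 0$. I would introduce the probability measure
\[
\diff\source=\frac{p}{2\pi\sigma}|\density(z)|^{p}e^{-p|z|^{2}/(2\sigma)}\,\diff z
\]
on $\C\simeq\R^{2}$, whose potential $\V(z)=-p\log|\density(z)|+\tfrac{p}{2\sigma}|z|^{2}+\textnormal{const}$ is locally integrable and, in the distributional sense, satisfies $\Delta\V\le 2p/\sigma=(p/\sigma)\,\dd$ with $\dd=2$, since $\log|\density|$ is subharmonic. Thus $\source$ is $(p/\sigma)$-log-subharmonic. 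Paired with the target $\target:=\mathcal N(0,(\sigma/p)\Id_{2})$, which is $(p/\sigma)$-log-concave (its potential is $\tfrac{p}{2\sigma}|z|^{2}+\textnormal{const}$ with Hessian $(p/\sigma)\Id_{2}$), we sit in the borderline regime $\lsh/\lc=1$ of Theorem \ref{thm:majorization_intro}, which yields that $\target$ majorizes $\source$.

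It then remains to extract a pointwise bound from majorization. For any pair of probability densities with $\fin\diff x$ majorizing $\sta\diff x$, applying Definition \ref{def:majorization_intro} to the convex function $\cxfun(t)=(t-\|\fin\|_{L^{\infty}})_{+}$ (convex on $\R_{\ge 0}$) forces $\|\sta\|_{L^{\infty}}\le\|\fin\|_{L^{\infty}}$. In our setting $\|\target\|_{L^{\infty}}=\target(0)=p/(2\pi\sigma)$ and $\|\source\|_{L^{\infty}}=\tfrac{p}{2\pi\sigma}\sup_{z}|\density(z)|^{p}e^{-p|z|^{2}/(2\sigma)}$, so continuity of $\density$ converts the inequality into $|\density(z)|\le e^{|z|^{2}/(2\sigma)}$ for every $z\in\C$. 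The case $p=\infty$ follows directly from the definition of $\|\density\|_{\infty,\sigma}$ and continuity of $\density$.

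The main obstacle I anticipate is the support/regularity hypothesis of Theorem \ref{thm:majorization_intro}: since $\density$ may have isolated zeros, the density of $\source$ vanishes on a discrete set where $\V=+\infty$. If the theorem tolerates a density vanishing on a measure-zero set (so that $\source$ still has full topological support), the argument runs as stated; otherwise I would regularize by replacing $|\density|^{2}$ with $|\density|^{2}+\varepsilon$, so that $\tfrac{1}{2}\log(|\density|^{2}+\varepsilon)$ is smooth and subharmonic, apply the theorem for each $\varepsilon>0$, and pass to $\varepsilon\downarrow 0$ using stability of both the log-subharmonicity bound and the resulting majorization inequality.
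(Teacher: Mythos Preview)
Your proposal is correct and follows the same construction as the paper: the same source $\source$ and target $\target=\mathcal N(0,(\sigma/p)\Id_2)$, the same observation that subharmonicity of $\log|\density|$ places you in the borderline case $\lsh=\lc=p/\sigma$. The only difference is in the final extraction of the pointwise bound. The paper invokes the determinant bound $\det\nabla^2\OT\le 1$ (Theorem \ref{thm:volume-contraction_OT_intro}) and reads the inequality directly off the Monge--Amp\`ere equation, $\source(z)=\target(\nabla\OT(z))\det\nabla^2\OT(z)\le\sup\target$, whereas you pass through the majorization statement (Theorem \ref{thm:majorization_intro}) and test against $\cxfun(t)=(t-\|\target\|_{L^\infty})_+$. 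Your route is marginally less direct, since majorization is itself derived from the determinant bound via Theorem \ref{thm:Melbourne-Roberto_intro}, but it is equally valid and has the pleasant feature of isolating the $L^\infty$ comparison as a general consequence of majorization. Your handling of the support hypothesis (isolated zeros of $\density$) is also more careful than the paper's own treatment, which simply applies Theorem \ref{thm:volume-contraction_OT_intro} without comment.
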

\begin{proof}
Fix $0<p\le \infty$ and $z\in \C$. Let $f:\C\to\C$ be an entire function satisfying $\|\density\|_{p,\sigma}\le 1$. Define the probability measures $\source$ and $\target$ on $\C$ by
\begin{equation}
\label{eq:source_entire}
\diff \source(z):=\frac{|\density(z)|^p}{\|\density\|_{p,\sigma}^p} \Gaussian_{\frac{\sigma}{p}}(z)\diff z,\quad\text{and}\quad \diff\target(z):=\Gaussian_{\frac{\sigma}{p}}(z)\diff z.
\end{equation}
Since $|\density|$ is log-subharmonic we get that $\source$ and $\target$ satisfy the assumptions of Theorem \ref{thm:volume-contraction_OT_intro} with $\lsh=\lc=\frac{p}{\sigma}$. Hence, \eqref{eq:det_bound_main_intro}  yields that $|\det \nabla\OT(z)|\le 1$ for almost all $z\in \C$ where $\nabla\OT$ is the Brenier map between $\source$ and $\target$. By the Monge-Amp\`ere equation,
\begin{equation}
\label{eq:MA_Fock}
\frac{|\density(z)|^p}{\|\density\|_{p,\sigma}} \Gaussian_{\frac{\sigma}{p}}(z)= \Gaussian_{\frac{\sigma}{p}}(\nabla\OT(z))\det\nabla^2\OT(z),
\end{equation}
we get
\[
|\density(z)|^p \Gaussian_{\frac{\sigma}{p}}(z)\overset{\det\nabla^2\OT\le 1}{\le} \Gaussian_{\frac{\sigma}{p}}(\nabla\OT(z))\|\density\|_{p, \sigma}\overset{\|\density\|_{p,\sigma}\le 1}{\le}\Gaussian_{\frac{\sigma}{p}}(\nabla\OT(z))\overset{\Gaussian_{\frac{\sigma}{p}}\le \frac{p}{2\pi\sigma}}{\le}  \frac{p}{2\pi\sigma}.
\]
Hence,
\[
|\density(z)|^p\le e^{p\frac{|z|^2}{2\sigma}},
\]
which completes the proof.
\end{proof}
Inspecting the proof of Theorem \ref{thm:Fock} it is immediate that it is a statement about $\conv\dd$-log-subharmonic functions, where $\density:\R^{\dd}\to\R$ is $\conv\dd$-log-subharmonic, for $\conv\in \R$, if $x\mapsto \log f(x)-\conv\frac{|x|^2}{2}$ is subharmonic. 

\begin{theorem}[Growth estimates for log-subharmonic functions]
\label{thm:lsh_bound}
Fix $\beta\ge 0$ and let $\density:\R^{\dd}\to\R$ be a $(-\beta\dd)$-log-subharmonic function such that $\int_{\R^{\dd}}\density \diff\Gaussian=1$, where $\Gaussian$ is the standard Gaussian measure on $\R^{\dd}$. Then,
\begin{equation}
\label{eq:lsh_bound}
f(x)\le (\beta+1)^{\frac{\dd}{2}}e^{\frac{|x|^2}{2}}.
\end{equation}
\end{theorem}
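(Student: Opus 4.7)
The strategy is to reduce Theorem \ref{thm:lsh_bound} to the volume-contraction bound of Theorem \ref{thm:volume-contraction_OT_intro}, in the spirit of the proof of Theorem \ref{thm:Fock}, by choosing the source measure to be the $f$-weighted Gaussian and the target to be the standard Gaussian, and then reading off the pointwise bound on $f$ from the Monge--Amp\`ere equation.

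Concretely, I would set $\diff\source := f \, \diff\Gaussian$, which is a probability measure by the normalization assumption. Writing its Lebesgue density as $e^{-\V}$, one has
\[
\V(x) = -\log f(x) + \frac{|x|^2}{2} + \frac{\dd}{2}\log(2\pi),
\]
and the hypothesis that $f$ is $(-\beta\dd)$-log-subharmonic (i.e.\ $\Delta \log f \ge -\beta\dd$) gives
\[
\Delta \V(x) = -\Delta \log f(x) + \dd \le (\beta+1)\dd.
\]
Taking $\target := \Gaussian$, one has $\nabla^2 \W = \Id_{\dd}$, so $\lc = 1$ and $\lsh = \beta+1$. Theorem \ref{thm:volume-contraction_OT_intro} then yields $\det\nabla^2\OT(x) \le (\beta+1)^{\dd/2}$ almost everywhere, where $\nabla\OT$ is the Brenier map from $\source$ to $\target$.

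Finally, the Monge--Amp\`ere equation reads
\[
f(x)\,\Gaussian(x) = \Gaussian(\nabla\OT(x))\,\det\nabla^2\OT(x),
\]
and since $\Gaussian(\nabla\OT(x)) \le (2\pi)^{-\dd/2}$, this gives
\[
f(x)\, e^{-|x|^2/2} \le (\beta+1)^{\dd/2},
\]
which is exactly \eqref{eq:lsh_bound}.

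The one genuine obstacle is the hypothesis in Theorem \ref{thm:volume-contraction_OT_intro} that the source measure be supported on all of $\R^{\dd}$: a priori $f$ may vanish on a set of positive measure, in which case $\source$ is not fully supported. To handle this I would run the argument with $f$ replaced by $f + \epsilon$; this is still $(-\beta\dd)$-log-subharmonic because $(f+\epsilon)e^{\beta|x|^2/2} = f\,e^{\beta|x|^2/2} + \epsilon\,e^{\beta|x|^2/2}$ is a sum of two log-subharmonic functions and hence log-subharmonic. Normalizing by $\int (f+\epsilon)\diff\Gaussian = 1+\epsilon$, applying the argument above, and sending $\epsilon \to 0$ gives the result; any further regularity issues can be addressed by the smoothing arguments already invoked in the paper (cf.\ Proposition \ref{prop:smooth}).
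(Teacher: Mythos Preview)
Your proof is correct and follows essentially the same route as the paper: define $\source=f\,\diff\Gaussian$, $\target=\Gaussian$, apply Theorem~\ref{thm:volume-contraction_OT_intro} with $\lsh=\beta+1$, $\lc=1$, and read off the pointwise bound from the Monge--Amp\`ere equation together with $\Gaussian\le(2\pi)^{-\dd/2}$. Your additional $\epsilon$-perturbation to ensure full support is a valid extra precaution that the paper's proof does not spell out; the justification that $f+\epsilon$ remains $(-\beta\dd)$-log-subharmonic via closure of log-subharmonic functions under sums is correct.
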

\begin{proof}
Define the probability measures $\diff\source:=\density\diff\Gaussian$ and $\target:=\Gaussian$, and note that they satisfy the assumptions of Theorem \ref{thm:volume-contraction_OT_intro} with $\lsh=(\beta+1)$ and $\lc=1$. Hence, by \eqref{eq:det_bound_main_intro}, the Brenier map $\nabla\OT$ between $\source$ and $\target$ satisfies
\begin{equation}
\label{eq:lsh_det_bound_proof}
|\det\nabla^2\OT|\le (\beta+1)^{\frac{\dd}{2}}. 
\end{equation}
By the Monge-Amp\`ere equation,
\begin{align*}
\density(x)\Gaussian(x)= \Gaussian(\nabla\OT(z))\det\nabla^2\OT(z)\overset{\eqref{eq:lsh_det_bound_proof}}{\le }  \Gaussian(\nabla\OT(z))(\beta+1)^{\frac{\dd}{2}}\overset{\Gaussian\le (2\pi)^{-\frac{\dd}{2}}}{\le}  \left(\frac{(\beta+1)}{2\pi}\right)^{\frac{\dd}{2}},
\end{align*}
so
\[
f(x)\le (\beta+1)^{\frac{\dd}{2}}e^{\frac{|x|^2}{2}}.
\]
\end{proof}
In \cite[Lemma 2.1]{MR3723580} Theorem \ref{thm:lsh_bound} is obtained under the stronger log-convexity assumption. The log-subharmonic case was treated in \cite{MR2578455} (in the print version, see \cite[Remark 2.2]{MR3723580}), but only for $0$-log-subharmonic functions. Theorem \ref{thm:lsh_bound}, which is probably already known, generalizes to $\conv\dd$-log-subharmonic functions for any $\conv\le 0$.

\subsection{The Wehrl conjecture for Glauber states}
\label{subsec:Wehrl_intro}
Next we apply our majorization results to provide a new proof and stability results for the Wehrl conjecture for Glauber states. The Wehrl  entropy is a classical notion of entropy defined for quantum  mechanical states.  Wehrl presented this notion in \cite{wehrl1979relation} and conjectured that this entropy is  bounded by 1 (in dimension 1). The conjecture was proven by Lieb \cite{MR0506364}, and the characterization of the minimizers was established by Carlen \cite{MR1105661}. The validity of Wehrl-type conjectures in other settings has been actively investigated, but there are still questions which remain open  \cite{kulikov2022monotonicity, MR4580699}. In this section we propose a new transport approach towards this problem. In particular, we will provide another proof of the (generalized) Wehrl conjecture for Glauber states using volume-contracting transport maps, as well as stability results.

We will now present the setting of the original Wehrl conjecture following \cite{MR1105661}. Given a Schr\"odinger wave function $\wave\in L^2(\R^d,\diff x)$ define the \emph{coherent state transform} $\cotrans:L^2(\R^d,\diff x)\to L^2(\R^{2d},\diff q\diff p)$, where $\diff x,\diff q,\diff p$ denote the standard Lebesgue measure on $\R^{d}$, by
\begin{equation}
\label{eq:conerent_transform_def}
\cotrans \wave(q,p):=e^{i\pi \langle q,p\rangle}\int_{\R^{d}}e^{i \pi\langle x,p\rangle}e^{-\pi|x-q|^2}\wave(x)\diff x.
\end{equation}
The coherent state transform maps a wave function to a function on the phase space, whose modulus $|\cotrans \wave(q,p)|^2$ represents the density (not necessarily normalized) of phase space states. Up to a scaling the coherent state transform is isometric \cite[Proposition 3.4.1]{grochenig2001foundations}\footnote{The relation between the Bargmann transform  in \cite{grochenig2001foundations} and the coherent state transform is  $B \wave(q,p)=2^{-\frac{d}{4}}e^{-\frac{\pi}{2}(|q|^2+|p|^2)}\cotrans \wave(q,p)$.}, so if $|\psi|_{L^2(\R^{d},\diff x)}=2^{-\frac{d}{2}}$, then
\begin{equation}
\label{eq:phase_prob_def}
\prob_{\wave}(q,p):=|\cotrans \wave(q,p)|^2
\end{equation}
is a probability measure on the phase space $\R^{2d}$. Wehrl conjectured that the differential entropy of $\prob_{\wave}$ (the negative of the Wehrl entropy), satisfies the bound
\begin{equation}
\label{eq:Wehrl_conjecture}
\Ent(\prob_{\wave})=\int_{\R^{2d}}\log \prob_{\wave} \diff \prob_{\wave} \le -d.
\end{equation}
Equality is attained in \eqref{eq:Wehrl_conjecture}  for the \emph{Glauber states},
\begin{equation}
\label{eq:Gaussian_state}
\waveg(x):=e^{i\theta}\wave_{q_0,p_0}(x),
\end{equation}
where $\theta \in \R/ 2\pi\mathbb Z $, $q_0,p_0\in	\R^{d}$, and
\[
\wave_{q_0,p_0}(x):=2^{\frac{d}{4}}e^{2\pi i\langle x,p_0\rangle}e^{-\pi|x-q_0|^2}.
\]
In other words, according to the conjecture  Glauber states maximize the differential entropy (equivalently minimize the Wehrl entropy),
\begin{equation}
\label{eq:Wehrl_conjecture_ent}
\Ent(\prob_{\wave})\le \Ent(\prob_{\waveg})\quad\text{for all wave functions }\wave\in L^2(\R^d,\diff x).
\end{equation}
As mentioned above, the conjectured inequality \eqref{eq:Wehrl_conjecture_ent} and the characterization of its equality cases (only Glauber states are minimizers) were proven in \cite{MR0506364, MR1105661}. In fact, a stronger statement can be made, namely the \emph{generalized Wehrl conjecture for Glauber states}, where the entropy is replaced by other convex functions, which in our terminology means that Glauber states majorize; see  \cite{kulikov2022monotonicity, MR4580699} for proofs of the inequalities and \cite{frank2023generalized} for their stability.  Our next result applies the tools developed in the previous sections to provide a new proof and stability results for the generalized Wehrl conjecture for Glauber states. In fact, we can also address \emph{mixed states}, 
\begin{equation}
\label{eq:mixed}
\prob_{\wave_1,\ldots,\wave_k}(q,p):=\sum_{j=1}^k\lambda_j|\cotrans \wave_j(q,p)|^2,
\end{equation}
where $\{\lambda_j\}_{j=1}^k$ are nonnegative weights which sum up to 1, and $\{\wave_j\}_{j=1}^k$ forms an orthogonal system in $L^2(\R^d,\diff x)$ with all individual square norms equal to $2^{-\frac{d}{2}}$. (The measure constructed in \eqref{eq:phase_prob_def} is a \emph{pure} states where all the weights but one vanish.) Our approach treats mixed and pure states in the same way which simplifies the analysis of stability, which in general is more difficult for mixed states \cite{frank2023generalized}.

\begin{theorem}[Generalized Wehrl conjecture for Glauber states]
\label{thm:generalized_Wehrl_conjecture} Let $k$ be a positive integer, let $\{\wave_j\}_{j=1}^k$  be an orthogonal system in $L^2(\R^d,\diff x)$ with all individual square norms equal to $2^{-\frac{d}{2}}$, and let $\{\lambda_j\}_{j=1}^k$ be nonnegative weights which sum up to 1. Define the probability measure
\[
\prob_{\wave_1,\ldots,\wave_k}(q,p):=\sum_{j=1}^k\lambda_j|\cotrans \wave_j(q,p)|^2,
\]  
and let $\nabla\OT$ be the Brenier map between $\prob_{\wave_1,\ldots,\wave_k}$ and $\prob_{\waveg}$. Then,
\begin{align}
\label{eq:infty_bound_Wehrl}
\|\Delta\OT\|_{L^{\infty}(\diff x)}\le 2d.
\end{align}
Moreover,
\begin{align}
\label{eq:majorization_Wehrl}
\int_{\R^{2d}} \cxfun(\prob_{\wave_1,\ldots,\wave_k})\diff p\diff q\le \int_{\R^{2d}} \cxfun(\prob_{\waveg})\diff p\diff q\quad\text{for every convex function }\mathrm \cxfun:[0,1] \to \R,
\end{align}
\begin{equation}
\label{eq:mono_geo_Wehrl}
[0,1]\ni t\mapsto \int_{\R^{\dd}}\cxfun(\prob_t(x))\diff x\textnormal{ is monotonically non-decreasing},
\end{equation}
where  $(\prob_t)_{t\in [0,1]}$ is the geodesic in Wasserstein space connecting $\prob_{\wave_1,\ldots,\wave_k}$ and $\prob_{\waveg}$, $\prob_t:=(\nabla\OT_t)_{\sharp}\prob_{\wave_1,\ldots,\wave_k}$ where $\nabla\OT_t(x):=(1-t)x+t\nabla\OT(x)$. Finally,
\begin{align}
\label{eq:entropy_stability_Wehrl}
\Ent(\prob_{\waveg})- \Ent(\prob_{\wave_1,\ldots,\wave_k}) \ge   \frac{1}{8d^2}\int_{\R^{2d}}\|\nabla^2\OT-\Id_{2d}\|_{\text{F}}^2\,\diff \prob_{\wave_1,\ldots,\wave_k}.
\end{align}
\end{theorem}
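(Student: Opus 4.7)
My plan is to realize $\prob_{\wave_1,\ldots,\wave_k}$ and $\prob_{\waveg}$ as a source/target pair meeting the hypotheses of Theorem \ref{thm:main_OT_Lp_intro} with $\lsh = \lc$, and then to cascade through Theorems \ref{thm:majorization_intro}, \ref{thm:mono_intro}, and \ref{thm:entropy_stability_intro}. First I would identify $\R^{2d}$ with $\C^d$ via $z = q+ip$ and use the Bargmann factorization of \eqref{eq:conerent_transform_def} to write $|\cotrans \wave_j(q,p)|^2 = |F_j(z)|^2 e^{-\pi|z|^2}$ with $F_j$ entire on $\C^d$, so that
\begin{equation*}
\prob_{\wave_1,\ldots,\wave_k}(q,p) = G(z)\, e^{-\pi|z|^2}, \qquad G(z) := \sum_{j=1}^{k} \lambda_j |F_j(z)|^2,
\end{equation*}
and the source potential reads $V(q,p) := -\log \prob_{\wave_1,\ldots,\wave_k}(q,p) = \pi|z|^2 - \log G(z)$.

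The key analytic step is to verify that $\log G$ is plurisubharmonic, a classical consequence of Cauchy--Schwarz. Since $F_j$ is holomorphic, $\partial_{\bar z_k} F_j = 0$, so $\partial_{z_k} G = \sum_j \lambda_j (\partial_{z_k} F_j)\overline{F_j}$ and $\partial_{\bar z_k}\partial_{z_k} G = \sum_j \lambda_j |\partial_{z_k} F_j|^2$. Cauchy--Schwarz in the $\lambda$-weighted inner product then gives $|\partial_{z_k} G|^2 \le G \cdot \partial_{\bar z_k}\partial_{z_k} G$, whence $\partial_{z_k}\partial_{\bar z_k}\log G \ge 0$ for each $k$ in the distributional sense. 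Translating complex derivatives back into the real Laplacian in $(q,p)$ coordinates yields $\Delta_{(q,p)} \log G \ge 0$, so $\Delta V \le 4\pi d = \lsh \cdot 2d$ with $\lsh = 2\pi$.

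For the target side, $\waveg$ is itself a coherent state at $(q_0,p_0)$, so a direct computation (or translation-covariance of the Bargmann transform) yields $\prob_{\waveg}(q,p) = C\exp(-\pi|q-q_0|^2 - \pi|p-p_0|^2)$. Hence $\nabla^2 W = 2\pi \Id_{2d}$ and $\lc = 2\pi = \lsh$. Theorem \ref{thm:main_OT_Lp_intro} in dimension $\dd = 2d$ then gives $\|\Delta\OT\|_{L^\infty} \le 2d\sqrt{\lsh/\lc} = 2d$, which is \eqref{eq:infty_bound_Wehrl}. Since $\lsh/\lc = 1$, Theorems \ref{thm:majorization_intro}, \ref{thm:mono_intro}, and \ref{thm:entropy_stability_intro} apply and deliver \eqref{eq:majorization_Wehrl}, \eqref{eq:mono_geo_Wehrl}, and \eqref{eq:entropy_stability_Wehrl} in turn (note that the prefactor $1/(8d^2) = 1/(2\dd^2)$ for $\dd = 2d$).

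I expect the main obstacle to be regularity: $\prob_{\wave_1,\ldots,\wave_k}$ vanishes on the analytic set $\{G = 0\}$, so the source is not strictly positive on $\R^{2d}$ and $V$ is $+\infty$ there. My fix would be to approximate $G$ by $G + \epsilon$: the same Cauchy--Schwarz computation still gives plurisubharmonicity of $\log(G+\epsilon)$, with quantitative slack $\epsilon\,\partial_{\bar z_k}\partial_{z_k} G/(G+\epsilon)^2$, the perturbed density is smooth and strictly positive on $\R^{2d}$, and standard lower semicontinuity of convex functionals together with stability of Brenier maps under weak convergence with uniform Gaussian tails allow the conclusions to pass to the limit as $\epsilon \to 0$.
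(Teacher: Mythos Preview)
Your proposal is correct and essentially matches the paper's proof: Bargmann factorization, log-subharmonicity of the mixture $G=\sum_j\lambda_j|F_j|^2$, Gaussianity of $\prob_{\waveg}$ with $\lc=\lsh=2\pi$, then invoke Theorems \ref{thm:main_OT_Lp_intro}, \ref{thm:majorization_intro}, \ref{thm:mono_intro}, \ref{thm:entropy_stability_intro} in dimension $\dd=2d$. Your direct Cauchy--Schwarz computation is exactly the content of the reference the paper cites for ``mixture of log-subharmonic is log-subharmonic,'' and your $G+\epsilon$ regularization, while harmless, is not needed: the zero set of $G$ is a null analytic variety, so $\source$ has full support and $V\in L^1_{\mathrm{loc}}$, which is all Theorem \ref{thm:main_OT_Lp_intro} requires (its proof already contains an internal smoothing step).
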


\begin{proof}
First note that from the definition of $\prob_{\wave_1,\ldots,\wave_k}$ and the assumptions on $\{\wave_j\}_{j=1}^k$, we have that $\prob_{\wave_1,\ldots,\wave_k}\le 1$. This will allow us later to restrict  to convex functions $\cxfun:[0,1]\to \R$ in \eqref{eq:majorization_Wehrl}. To prove \eqref{eq:infty_bound_Wehrl} first note that direct computation shows that $\prob_{\waveg}$ is a Gaussian on $\R^{2d}$ with mean $(q_0,-p_0)$ and covariance $\frac{1}{2\pi}\Id_{2d}$,
\[
\prob_{\waveg}=\mathcal N\left((q_0,-p_0),\frac{1}{2\pi}\Id_{2d}\right).
\]
On the other hand, the relation between the coherent state transform and the Bargmann transform \cite[Proposition 3.4.1]{grochenig2001foundations}\footnotemark[\value{footnote}] yields, for each $j=1,\ldots,k$,
\begin{equation}
\label{eq:STFT_Bargmann_j}
|\cotrans\wave_j(q,p)|^2=2^{-\frac{d}{2}}\left|\tilde f_j(q+ip)\right|^2e^{-\pi(|q|^2+|p|^2)},
\end{equation}
where $\tilde f_j:\mathbb C^{d}\to \mathbb C^{d}$ is an entire function. In particular,
\begin{equation}
\label{eq:STFT_Bargmann}
\prob_{\wave_1,\ldots,\wave_k}(q,p)=2^{-\frac{d}{2}}e^{-\pi(|q|^2+|p|^2)}\sum_{j=1}^k\lambda_j\left|\tilde f_j(q+ip)\right|^2.
\end{equation}
The logarithm of the modulus of an entire function is subharmonic, so for each $j=1,\ldots, k$, $|\tilde f_j(q+ip)|^2$ is log- subharmonic. Since the  mixture of log-subharmonic functions is log-subharmonic \cite[Proposition 2.2]{MR2578455}, it follows that $\sum_{j=1}^k\lambda_j\left|\tilde f_j(q+ip)\right|^2$ is log-subharmonic. Hence, we can apply Theorem \ref{thm:main_OT_Lp_intro} and Theorem \ref{thm:volume-contraction_OT_intro}, (in fact also Theorem \ref{thm:majorization_KM_intro}), with 
\[
\dd:=2d,\quad \source:=\prob_{\wave_1,\ldots,\wave_k},\quad\target:=\prob_{\waveg},\quad \lc:=2\pi,\quad \lsh:=2\pi.
\]
In particular, $\sqrt{ \frac{\lsh}{\lc}}=1$ which yields the bound \eqref{eq:infty_bound_Wehrl} and also means that we can apply Theorem \ref{thm:majorization_intro}, Theorem \ref{thm:mono_intro}, and Theorem \ref{thm:entropy_stability_intro}  to get \eqref{eq:majorization_Wehrl}, \eqref{eq:mono_geo_Wehrl}, and \eqref{eq:entropy_stability_Wehrl}, respectively.
\end{proof}

\subsection{Two-dimensional Coulomb gases}
\label{subsec:plasma_intro} 
Coulomb gases are important models in mathematical physics with deep connections to random matrix theory; see \cite{MR4619310} for a brief survey. In this section we review the two-dimensional (one-component) Coulomb gas, to which our results above will apply. We consider $N$ particles in $\R^2$ of identical charge in  a fixed neutralizing background  at inverse temperature $\beta$, whose interactions are logarithmic. We identify $\R^2 \simeq\C$ and denote the positions of the particles as $z:=(z_1,\ldots,z_N)\in\C^N$, for $z_i\in \C$. Let $\source_{\beta,N}$  be the probability measure on $\C^{N}$ defined by
\begin{equation}
\label{eq:mu_betaN}
\diff \source_{\beta,N,Q}(z):=\frac{e^{-Q_{\beta,N}(z)-G_{\beta,N}(z)}}{\mathcal Z_{\beta, N}}\diff  z\quad \text{with}\quad \mathcal Z_{\beta, N}:=\int_{\C^N}e^{-Q_{\beta,N}(z)-G_{\beta,N}(z)}\diff z,
\end{equation}
where
\begin{equation}
\label{eq:V_betaN}
Q_{\beta,N}(z):=\beta N\sum_{j=1}^NQ(z_j)\quad\text{with}\quad Q:\C\to \R,
\end{equation}
and
\begin{equation}
\label{eq:V_betaN}
G_{\beta,N}(z):=-\beta \sum_{i<j=1}^N\log |z_i-z_j|.
\end{equation}
The term $Q_{\beta,N}$ models the particle-background interaction, and the term $G_{\beta,N}$  models the particle-particle logarithmic interaction. A common choice for $Q_{\beta,N}$ is taking $Q$ to be quadratic, $Q(z_j)=\frac{|z_j|^2}{2}$. Since $\Delta  G_{\beta,N}\le 0$, we can apply our results to the  two-dimensional (one-component) Coulomb gas $\source_{\beta,N,Q}$ and the pure background model $\target_{\beta,N,Q}$ given by 
\[
\diff \target_{\beta,N,Q}(z):=\frac{e^{-Q_{\beta,N}(z)}}{\int e^{-Q_{\beta,N}(z)}\diff z}\diff z.
\]
\begin{theorem}[Two-dimensional one-component gas]
\label{thm:Coulomb}
Suppose there exists $\lsh_2,\lc_2>0$ such that 
\[
\Delta Q \le 2\lsh_2\quad\text{and}\quad \nabla^2Q \succeq \lc_2 \Id_{2},
\]
where $\Delta Q, \nabla^2Q$ are the distributional Laplacian and Hessian on $\R^2$, respectively. Let $\nabla\OT_{\beta,N,Q}$ be the optimal transport map between $\source_{\beta,N,Q}$ and $\target_{\beta,N,Q}$. Then, 
\begin{align}
\label{eq:infty_bound_plasma}
\|\Delta\OT_{\beta,N,Q}\|_{L^{\infty}(\diff x)}\le 2N\sqrt{\frac{\lsh_2}{\lc_2}} ,\quad\text{and in particular}\quad  \|\det\nabla^2\OT_{\beta,N,Q}\|_{L^{\infty}(\diff x)}\le \left(\frac{\lsh_2}{\lc_2}\right)^N.
\end{align}
Moreover, when 
\begin{equation}
\label{eq:quad_coulomb}
Q(z_j)=\frac{|z_j|^2}{4}\quad\textnormal{ for all }\quad j=1,\ldots, N,
\end{equation}
we have
\begin{align}
\label{eq:majorization_plasma}
\int_{\C^N} \cxfun(\source_{\beta,N,Q})\diff z\le \int_{\C^N} \cxfun(\target_{\beta,N,Q})\diff z\quad\text{for every convex function }\mathrm \cxfun:\R_{\ge 0} \to \R,
\end{align}
\begin{equation}
\label{eq:mono_geo_plasma}
[0,1]\ni t\mapsto \int_{\R^{\dd}}\cxfun(\prob_{t,\beta,N,Q}(x))\diff x\textnormal{ is monotonically non-decreasing},
\end{equation}
where  $(\prob_{t,\beta,N,Q})_{t\in [0,1]}$ is the geodesic in Wasserstein space connecting $\source_{\beta,N,Q}$ and $\target_{\beta,N,Q}$, $\prob_{t,\beta,N,Q}:=(\nabla\OT_{t,\beta,N,Q})_{\sharp}\source_{\beta,N,Q}$ where $\nabla\OT_{t,\beta,N,Q}(x):=(1-t)x+t\nabla\OT_{\beta,N,Q}(x)$, and
\begin{align}
\label{eq:entropy_stability_plasma}
\Ent(\target_{\beta,N,Q})- \Ent(\source_{\beta,N,Q}) \ge   \frac{1}{8N^2}\int_{\C^N}\|\nabla^2\OT_{\beta,N,Q}-\Id_{2N}\|_{\text{F}}^2\,\diff \source_{\beta,N,Q}.
\end{align}
\end{theorem}
\begin{proof}
Denote $\diff\source_{\beta,N,Q}=:e^{-\V_{\beta,N,Q}}\diff z$. The assumption $\nabla^2Q  \succeq \lc_2 \Id_{2}$ implies that $\nabla^2Q_{\beta,N}  \succeq \lc_2 \beta N\Id_{2N}$. Since $\Delta  G_{\beta,N}\le 0$, the assumption $\Delta Q \le 2\lsh_2$ implies $\Delta \V_{\beta,N,Q}=\Delta Q_{\beta,N}+\Delta  G_{\beta,N}\le 2\lsh_2\beta N^2$.  Hence, setting 
\[
\dd:=2N,\quad \source:=\source_{\beta,N,Q},\quad\target:=\target_{\beta,N,Q},\quad \lc:=\lc_2\beta N,\quad \lsh:=\lsh_2\beta N,
\]
we can apply Theorem \ref{thm:main_OT_Lp_intro}  and Theorem \ref{thm:volume-contraction_OT_intro} to get \eqref{eq:infty_bound_plasma}. The remaining assertions of the theorem hold only when $Q$ is quadratic,  where $\lsh_2=\lc_2=1$, by Theorem \ref{thm:majorization_intro}, Theorem \ref{thm:mono_intro}, and Theorem \ref{thm:entropy_stability_intro}, respectively.
\end{proof}

\subsection*{Organization of paper}
Section \ref{sec:divergence_OT} proves  Theorem \ref{thm:main_OT_Lp_intro}, while Section \ref{sec:major} focuses on majorization, proving in  particular Theorem \ref{thm:mono_intro}  and Theorem \ref{thm:entropy_stability_intro}. Section \ref{sec:KM} is dedicated to the Kim-Milman heat flow map. Section \ref{sec:appendix} is an appendix containing a number of technical results. 
\subsection*{Acknowledgments }
We thank Alessio Figalli and Cole Graham for useful conversations. We thank Rupert Frank for explaining to us the mixed state case of the Wehrl conjecture, Emanuel Milman for Remark \ref{rem:compose}, Ramon van Handel for pointing the applicability of our results to two-dimensional Coulomb gases, and Ammari Bader for pointing out a mistake in a previous version of Theorem \ref{thm:Coulomb}. We also thank the anonymous referee for their helpful comments. 

This material is based upon work supported by the National Science Foundation under Award Number DMS-2331920 and DMS 2055686. G.D.P. also acknowledges the support of the Simons Foundation.

\section{The divergence of the Brenier map}
\label{sec:divergence_OT}  
In this section we will prove $L^p$ estimates on $\Delta\OT$ (Theorem \ref{thm:main_OT_Lp}), which in particular will imply Theorem \ref{thm:main_OT_Lp_intro}. While the proof of our main result below Theorem \ref{thm:main_OT_Lp} is quite technical, at the formal level it follows the original (formal) derivation of Caffarelli \cite{MR1800860}, as we now explain. Our goal is to derive the bound 
\begin{equation}
\label{eq:infty_bound_main_sec}
\|\Delta\OT\|_{L^{\infty}(\diff x)}\le \dd\sqrt{ \frac{\lsh}{\lc}} \quad \text{when}\quad \Delta \V\le \lsh\dd ~\text{and}~ \nabla^2\W\succeq \lc \Id_{\dd}.
\end{equation}
The derivation is based on differentiating the Monge-Amp\`ere equation twice, and using the optimality conditions at the point where $\Delta\OT$ attains its maximum. To this end, given a unit vector $e\in \R^{\dd}$ and a function $\xi:\R^{\dd}\to \R$, denote by $\xi_e$ (res. $\xi_{ee}$) the first (res. second) directional derivative of $\xi$ in the direction $e$. The Monge-Amp\`ere equation reads
\begin{equation}
\label{eq:Monge}
e^{-\V}=e^{-\W}(\nabla \OT)\det\nabla^2\OT,
\end{equation}
so take the logarithm on both sides of \eqref{eq:Monge},  and differentiate twice in a fixed direction $e$, to get
\begin{align}
\label{eq:Vee}
\V_{ee}&=\langle \nabla^2\W(\nabla\OT)\nabla^2\OT e,\nabla^2\OT e\rangle+\langle \nabla\W,\nabla\OT_{ee}\rangle-\partial_{ee}\log\det \nabla^2\OT.
\end{align}
Using the identity
\begin{equation}
\label{eq:Jacobi_ee}
\partial_{ee}\log\det \nabla^2\OT=\Tr[( \nabla^2\OT)^{-1}\nabla^2\OT_{ee}]-\Tr\left[\left(( \nabla^2\OT)^{-1}\nabla^2\OT_e\right)^2\right],
\end{equation}
the identity \eqref{eq:Vee} becomes
\begin{align}
\label{eq:Vee+}
\V_{ee}&=\langle \nabla^2\W(\nabla\OT)\nabla^2\OT e,\nabla^2\OT e\rangle+\langle \nabla\W,\nabla\OT_{ee}\rangle-\Tr[( \nabla^2\OT)^{-1}\nabla^2\OT_{ee}]+\Tr\left[\left(( \nabla^2\OT)^{-1}\nabla^2\OT_e\right)^2\right].
\end{align}
Using $\nabla^2\W\succeq \lc \Id_{\dd}$, \eqref{eq:Vee+} implies
\begin{align}
\label{eq:Vee_inq}
\V_{ee}\ge \lc|\nabla^2\OT e|^2+\langle \nabla\W,\nabla\OT_{ee}\rangle-\Tr[( \nabla^2\OT)^{-1}\nabla^2\OT_{ee}]+\Tr\left[\left(( \nabla^2\OT)^{-1}\nabla^2\OT_e\right)^2\right],
\end{align}
so summing on both sides of \eqref{eq:Vee_inq} over a basis $\{e_i\}$ of $\R^{\dd}$ yields 
\begin{equation}
\label{eq:Jacobi_Laplacian}
\Delta \V\ge\lc \sum_{i=1}^{\dd}|\nabla^2\OT e_i|^2+ \langle \nabla\W,\nabla\Delta\OT\rangle-\Tr[( \nabla^2\OT)^{-1}\nabla^2\Delta\OT]+\sum_{i=1}^{\dd}\Tr\left[\left(( \nabla^2\OT)^{-1}\nabla^2\OT_{e_i}\right)^2\right].
\end{equation}

Suppose $\Delta\OT$ attains its maximum $x_0$.  Then  the optimality conditions give $\nabla \Delta\OT(x_0)=0$ and $\nabla^2\Delta\OT(x_0) \preceq 0$, so
\begin{equation}
\label{eq:opt_condition}
\langle \nabla\W(x_0),\nabla\Delta\OT(x_0)\rangle-\Tr[( \nabla^2\OT(x_0))^{-1}\nabla^2\Delta\OT(x_0)]\ge 0.
\end{equation}
On the other hand,
\begin{equation}
\label{eq:Tr_noneg}
\Tr\left[\left(( \nabla^2\OT)^{-1}\nabla^2\OT_e\right)^2\right]=\Tr[A^2]\ge 0\quad\text{with}\quad A:=( \nabla^2\OT)^{-\frac{1}{2}}\nabla^2\OT_e( \nabla^2\OT)^{-\frac{1}{2}}.
\end{equation}
Hence, combining \eqref{eq:Jacobi_Laplacian}, \eqref{eq:opt_condition}, \eqref{eq:Tr_noneg}, and applying the Cauchy-Schwarz inequality, shows that 
\begin{equation}
\label{eq:main_inq_proof_e}
\lsh\dd\ge\Delta \V(x_0)\ge  \lc \sum_{i=1}^{\dd}|\nabla^2\OT(x_0) e_i|^2\ge \kappa\sum_{i=1}^{\dd}\langle \nabla^2\OT(x_0)e_i,e_i\rangle^2. 
\end{equation}
By Jensen's inequality,
\begin{equation}
\label{eq:main_inq_proof}
\lsh\dd\ge\Delta \V(x_0)\ge \lc\sum_{i=1}^{\dd}\langle \nabla^2\OT(x_0)e_i,e_i\rangle^2\ge \frac{\lc}{\dd}[\Delta\OT(x_0)]^2,
\end{equation}
so it follows that
\begin{equation}
\label{eq:OT_subharmonic}
\Delta\OT(x_0)\le \dd\sqrt{\frac{\lsh}{\lc}},
\end{equation}
where we used that $\Delta\OT(x_0)\ge 0$ since $\OT$ is convex. Since $x_0$ is a point where $\Delta\OT$ attains its maximum, we conclude \eqref{eq:infty_bound_main_sec}. Making the above argument rigorous is difficult due to the need to show the existence of a point $x_0$ where $\Delta\OT$ attains its maximum. Instead, we follow the $L^p$ approach of Kolesnikov \cite{MR3201654}, \cite[\S 6]{kolesnikov2011mass}, but with some necessary modifications (cf. Remark \ref{rem:modification}).

\begin{theorem}
\label{thm:main_OT_Lp}
Let $\diff\source=e^{-\V}\diff x$ and $\diff\target=e^{-\W}\diff x$ be probability measures on $\R^{\dd}$, with $\source$ supported on all of $\R^{\dd}$,  such that there exist $\lsh>0,\lc>0$ with
\[
\Delta \V \le \lsh\dd \quad\text{and}\quad \nabla^2\W \succeq \lc \Id_{\dd}.
\]
 Let $\nabla\OT:\R^{\dd}\to\R^{\dd}$ be the Brenier map transporting $\source$ to $\target$. Then,
\begin{align}
\label{eq:infty_bound_main}
\|\Delta\OT\|_{L^{\infty}(\diff x)}\le \dd\sqrt{ \frac{\lsh}{\lc}}.
\end{align}
\end{theorem}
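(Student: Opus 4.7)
The plan is to make the maximum-principle derivation outlined in \eqref{eq:infty_bound_main_sec}--\eqref{eq:OT_subharmonic} rigorous via an $L^p$ approach in the spirit of Kolesnikov, which avoids having to produce a point at which $\Delta\OT$ is maximized. Working first under a regularity assumption that will be removed by approximation at the end, I would multiply the pointwise inequality \eqref{eq:Jacobi_Laplacian} by the nonnegative weight $(\Delta\OT)^{p-1}$ (for $p\ge 1$; note $\Delta\OT\ge 0$ since $\OT$ is convex) and integrate against $\diff\source = e^{-\V}\diff x$.

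On the left-hand side, the distributional hypothesis $\Delta\V\le\lsh\dd$, paired against the nonnegative test function $(\Delta\OT)^{p-1}e^{-\V}$, should give
\[
\int (\Delta\OT)^{p-1}\Delta\V \diff\source \le \lsh\dd \int (\Delta\OT)^{p-1}\diff\source.
\]
On the right-hand side, the pointwise-nonnegative trace-of-squares term is discarded, leaving the mixed second-order contribution, which must be shown to be nonnegative after integration:
\[
\int (\Delta\OT)^{p-1}\Bigl[\langle \nabla\W(\nabla\OT),\nabla\Delta\OT\rangle - \Tr[(\nabla^2\OT)^{-1}\nabla^2\Delta\OT]\Bigr]\diff\source \ge 0.
\]
This is the technical heart. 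The plan is to introduce the linearized Monge-Amp\`ere operator $\mathcal L\phi := \Tr[(\nabla^2\OT)^{-1}\nabla^2\phi] - \langle \nabla\W(\nabla\OT),\nabla\phi\rangle$ and, using the Piola identity $\sum_j \partial_j\Cof(\nabla^2\OT)_{ij} = 0$ combined with the Monge-Amp\`ere relation $e^{-\V} = e^{-\W(\nabla\OT)}\det\nabla^2\OT$, to establish the integration-by-parts formula $\int g\,\mathcal L\phi \diff\source = -\int \langle (\nabla^2\OT)^{-1}\nabla g,\nabla\phi\rangle\diff\source$. Specializing to $g = (\Delta\OT)^{p-1}$ and $\phi = \Delta\OT$ then produces $(p-1)\int (\Delta\OT)^{p-2}\langle (\nabla^2\OT)^{-1}\nabla\Delta\OT,\nabla\Delta\OT\rangle\diff\source$, which is nonnegative by positivity of $(\nabla^2\OT)^{-1}$.

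Combining these steps with the Cauchy-Schwarz/Jensen reduction $\sum_i |\nabla^2\OT\,e_i|^2 \ge (\Delta\OT)^2/\dd$ used in \eqref{eq:main_inq_proof_e}--\eqref{eq:main_inq_proof} yields the closed inequality
\[
\lsh\dd \int (\Delta\OT)^{p-1}\diff\source \ge \frac{\lc}{\dd}\int (\Delta\OT)^{p+1}\diff\source,
\]
and H\"older's inequality on the probability space $(\R^{\dd},\source)$ then converts this into the $p$-uniform bound $\|\Delta\OT\|_{L^{p+1}(\source)} \le \dd\sqrt{\lsh/\lc}$. Sending $p\to\infty$ will give \eqref{eq:infty_bound_main}.

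The principal obstacle is regularity: under only distributional log-subharmonicity of $\V$, the potential $\V$ is not smooth and $\OT$ need not be $C^3$, so the identities \eqref{eq:Vee+} and \eqref{eq:Jacobi_ee}, the Piola identity, and the integration by parts above are all a priori only formal. I would handle this by approximating $\source$ and $\target$ by measures $\source_\epsilon,\target_\epsilon$ with compactly-supported smooth log-convex densities satisfying $\Delta\V_\epsilon \le \lsh\dd + o(1)$ and $\nabla^2\W_\epsilon \succeq (\lc - o(1))\Id_{\dd}$, applying Caffarelli's regularity theory to obtain the required smoothness of the approximate Brenier maps $\nabla\OT_\epsilon$, running the $L^p$ argument above in this regular setting with uniform constants, and finally passing to the limit using local uniform convergence of Brenier maps together with lower semicontinuity of $\|\cdot\|_{L^\infty}$ (cf.\ Proposition~\ref{prop:smooth}).
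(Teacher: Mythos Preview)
Your overall strategy matches the paper's: reduce to smooth data by approximation, run an $L^p$ version of the maximum-principle argument \`a la Kolesnikov, show the mixed second-order term has a sign via an integration-by-parts identity for the linearized Monge--Amp\`ere operator, obtain the recursion $\lsh\dd\,\|\Delta\OT\|_{L^{p-1}}^{p-1}\ge(\lc/\dd)\|\Delta\OT\|_{L^{p+1}}^{p+1}$, and let $p\to\infty$. Your integration-by-parts identity $\int g\,\mathcal L\phi\,\diff\source=-\int\langle(\nabla^2\OT)^{-1}\nabla g,\nabla\phi\rangle\diff\source$ is correct and is exactly the mechanism behind the paper's Proposition~\ref{prop:noneg_term}.

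The substantive technical difference is this: even after smoothing, the paper does \emph{not} work with \eqref{eq:Jacobi_Laplacian} directly. It replaces second derivatives by the spherical finite difference $\Delta_\epsilon f(x)=\avgint_{\partial B_\epsilon(0)}[f(x+y)-f(x)]\diff y$ and only passes to the limit $\epsilon\to 0$ at the very end. The reason is integrability: your route requires that $(\Delta\OT)^{p-1}\Tr[(\nabla^2\OT)^{-1}\nabla^2\Delta\OT]\,e^{-\V}$ and the companion terms be integrable over $\R^{\dd}$, and that boundary terms vanish, which involves third and fourth derivatives of $\OT$. Caffarelli's theory under Assumption~\ref{assump} gives $\OT\in C^\infty$ and the uniform bound $\nabla^2\OT\preceq C\Id_{\dd}$, but no global growth control on $\nabla^3\OT$, $\nabla^4\OT$. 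The $\Delta_\epsilon$ device lets the paper run the entire argument using only the $C^2$ bound (see Lemma~\ref{lem:integrate_finite}), with cutoffs $\eta_k$ to kill boundary terms. Your approach is not wrong, but you should expect to insert cutoffs and argue (e.g.\ via local Schauder estimates) that the higher-derivative integrands are controlled; the paper's $\Delta_\epsilon$ route sidesteps this entirely. A minor point: your proposed approximants cannot be ``compactly-supported'' while keeping $\nabla^2\W_\epsilon\succeq(\lc-o(1))\Id_{\dd}$; the paper's approximation (Proposition~\ref{prop:smooth}) uses Ornstein--Uhlenbeck smoothing plus a small quadratic penalty, producing fully-supported measures.
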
 

We start by showing that it suffices to prove Theorem \ref{thm:main_OT_Lp} assuming sufficient regularity for $\source$ and $\target$. These regularity assumptions are captured as follows.

\begin{assumption}
\label{assump}
$~$

\begin{enumerate}
\item $\V$ is smooth.
\item There exists a constant $c>0$ such that $\nabla^2\V(x) \preceq c\Id_{\dd}$ for all $x\in \R^{\dd}$.
\item There exist constants $a,b >0$ such that $\lim_{|x|\to\infty}\V(x)-(b|x|^2-a)\ge 0$. 
\item $\W$ is smooth.
\item There exists a constant $d>0$ such that $\nabla^2\W(x) \preceq d\Id_{\dd}$ for all $x\in \R^{\dd}$.
\end{enumerate}
\end{assumption}
Note that Assumption \ref{assump}(3) implies that $\source$ has a finite second moment while  the assumption  $\nabla^2\W  \succeq \lc \Id_{\dd}$ implies that $\target$ has a finite second moment (by the Poincar\'e inequality for $\target$).

\begin{proposition}
\label{prop:smooth}
It suffices to prove Theorem \ref{thm:main_OT_Lp} for $\source,\target$ which in addition to satisfying the assumptions of Theorem \ref{thm:main_OT_Lp} also satisfy Assumption \ref{assump}. 
\end{proposition}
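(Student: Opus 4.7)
The plan is to approximate $\source = e^{-\V}\diff x$ and $\target = e^{-\W}\diff x$ by a family of regularized probability measures $\source_\epsilon, \target_\epsilon$ whose potentials $\V_\epsilon, \W_\epsilon$ satisfy Assumption \ref{assump} with constants $\lsh_\epsilon \to \lsh$ and $\lc_\epsilon \to \lc$ as $\epsilon \to 0$, apply Theorem \ref{thm:main_OT_Lp} (assumed to hold under Assumption \ref{assump}) to each regularized pair, and then pass to the limit.

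For the construction I would take Gaussian convolutions of the densities combined with a small quadratic correction for growth. Letting $\Gaussian_\epsilon$ denote the centered Gaussian of covariance $\epsilon \Id_\dd$, set
\[
\V_\epsilon(x) := -\log(\source * \Gaussian_\epsilon)(x) + \tfrac{\epsilon}{2}|x|^2 + c_\epsilon, \qquad \W_\epsilon(x) := -\log(\target * \Gaussian_\epsilon)(x) + \tfrac{\epsilon}{2}|x|^2 + \tilde c_\epsilon,
\]
with $c_\epsilon,\tilde c_\epsilon$ normalization constants (which vanish as $\epsilon \to 0$). Smoothness of the potentials and quadratic growth are then immediate, and the upper Hessian bounds in Assumption \ref{assump}(2),(5) follow from the classical identity
\[
\nabla^2\bigl(-\log(\prob * \Gaussian_\epsilon)\bigr)(x) = \tfrac{1}{\epsilon}\Id_\dd - \tfrac{1}{\epsilon^2}\Cov(Y),
\]
where $Y$ is distributed according to $\Gaussian_\epsilon(x-\cdot)\prob / (\prob * \Gaussian_\epsilon)(x)$. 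The lower Hessian bound $\nabla^2\W_\epsilon \succeq \bigl(\tfrac{\lc}{1+\epsilon\lc}+\epsilon\bigr)\Id_\dd$ follows from the Prékopa--Leindler-type stability of log-concavity under convolution and converges to $\lc$.

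The main technical step is preservation of the Laplacian bound. Writing $\tilde\V_t := -\log(\source * \Gaussian_t)$, the heat equation $\partial_t(\source*\Gaussian_t) = \Delta(\source*\Gaussian_t)$ translates into the Hamilton--Jacobi-type relation $\partial_t \tilde\V_t = \Delta \tilde\V_t - |\nabla \tilde\V_t|^2$; applying $\Delta$ to both sides and invoking Bochner's identity gives the drift-diffusion equation
\[
\partial_t(\Delta \tilde\V_t) = \Delta(\Delta \tilde\V_t) - 2\langle \nabla \tilde\V_t, \nabla(\Delta \tilde\V_t)\rangle - 2|\nabla^2 \tilde\V_t|_{\text{F}}^2,
\]
whose forcing term $-2|\nabla^2 \tilde\V_t|_{\text{F}}^2$ is non-positive. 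The parabolic maximum principle then implies that $\sup_x \Delta \tilde\V_t(x)$ is non-increasing in $t$, and accounting for the quadratic correction yields $\Delta\V_\epsilon \le (\lsh+\epsilon)\dd$.

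For the limit, the construction ensures $\source_\epsilon \to \source$ and $\target_\epsilon \to \target$ in $W_2$ (second moments are uniformly controlled by our hypotheses). Standard stability of Brenier maps under $W_2$-convergence of the marginals then gives $\nabla\OT_\epsilon \to \nabla\OT$ locally uniformly on the interior of $\supp(\source)$, hence $\Delta\OT_\epsilon \to \Delta\OT$ in the sense of distributions. The uniform estimate $\|\Delta \OT_\epsilon\|_{L^\infty(\diff x)} \le \dd\sqrt{\lsh_\epsilon/\lc_\epsilon}$ then passes to the limit to yield \eqref{eq:infty_bound_main}. The main obstacle throughout is the simultaneous preservation of the Laplacian bound $\Delta\V\le\lsh\dd$ and the upper Hessian bound $\nabla^2\V\preceq c\Id_\dd$: direct mollification of $\V$ preserves the former but not the latter, while Gaussian convolution of the density preserves the latter but requires the parabolic maximum principle argument above to preserve the former.
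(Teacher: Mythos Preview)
Your approach is essentially the paper's: smooth the densities by the heat semigroup (the paper uses the equivalent Ornstein--Uhlenbeck semigroup), add a small quadratic term for growth, and verify Assumption~\ref{assump} via the covariance identity for the upper Hessian bound, Pr\'ekopa--Leindler for the lower Hessian bound on $\W_\epsilon$, and the Hamilton-type parabolic maximum principle for the Laplacian bound on $\V_\epsilon$; these are exactly the ingredients packaged in the paper's Proposition~\ref{prop:OU_smooth}.

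The one genuine imprecision is your appeal to $W_2$ convergence in the limit step. The hypotheses of Theorem~\ref{thm:main_OT_Lp} do not force $\source$ to have finite second moment: for instance in $\dd=1$ take $e^{-\V(x)}\propto(1+x^2)^{-5/4}$, which satisfies $\V''\le 5/2$ but has infinite variance, so $\source_\epsilon\to\source$ in $W_2$ can fail and ``second moments are uniformly controlled by our hypotheses'' is not justified. The paper avoids this by using only weak convergence together with the uniform Lipschitz bound $\nabla^2\OT_\epsilon\preceq \dd\sqrt{\lsh_\epsilon/\lc_\epsilon}\,\Id_\dd$ (which you already have, since $\OT_\epsilon$ is convex and $\Delta\OT_\epsilon$ is bounded); Arzel\`a--Ascoli then gives a subsequential locally uniform limit of $\nabla\OT_\epsilon$, which is identified as $\nabla\OT$, and your distributional passage to the limit for $\Delta\OT_\epsilon$ goes through unchanged.
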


\begin{proof}
The proof is based on the composition of the following two steps. 

\noindent\textbf{Step 1.} Given $\source,\target$ which satisfy the assumptions of Theorem \ref{thm:main_OT_Lp} we show that there exist sequences of probability measures $\{\source_k\},\{\target_k\}$ converging weakly to $\source,\target$, respectively, such that,  for all $k$,  each pair $\source_k,\target_k$  satisfies the assumptions of Theorem \ref{thm:main_OT_Lp} and Assumption \ref{assump}.

\noindent\textbf{Step 2.} Assume that Theorem \ref{thm:main_OT_Lp} holds true for each pair $\source_k,\target_k$ and then pass to the limit to deduce \eqref{eq:infty_bound_main}.

Before we start with step 1 let us state a number of important smoothing properties of the  Ornstein-Uhlenbeck semigroup which we will use to smooth out the measures $\source$ and $\target$.

\begin{proposition}
\label{prop:OU_smooth}
Let $\Gaussian$ be the standard Gaussian measure in $\R^{\dd}$, and let $f:\R^{\dd}\to \R_{\ge 0}$ be a nonnegative function in $L^1(\Gaussian)$. Let $(\Pheat_t)_{t\ge 0}$ be the Ornstein-Uhlenbeck semigroup,
\begin{equation}
\label{eq:OU_def_prop}
\Pheat_t\density(x):=\int_{\R^{\dd}} \density(e^{-t}x+\sqrt{1-e^{-2t}}y)\diff \Gaussian(y),\quad  t\ge 0,\quad x\in \R^{\dd}.
\end{equation}
\begin{enumerate}[(i)]
\item For every $x\in \R^{\dd}$ and $t>0$,
\begin{equation}
\label{eq:less_lc}
\nabla^2\log\Pheat_t\density(x) \succeq -\frac{e^{-2t}}{1-e^{-2t}} \Id_{\dd}\quad \textnormal{for all }x\in\R^{\dd} \text{ and }t>0.
\end{equation}
\item Suppose $\density$ is $\conv$-log-concave for $\conv\in \R$  (i.e., $x\mapsto \log \density(x)-\conv\frac{|x|^2}{2}$ is concave). Then, for every $x\in \R^{\dd}$,
\begin{equation}
\label{eq:lc_OU}
\nabla^2\log\Pheat_t\density(x)\preceq \frac{ e^{-2t}\conv}{1-\conv(1-e^{-2t})} \Id_{\dd}\begin{cases}
\text{for any }t\in [0,\infty) &\text{if }\conv\le 1\\
\text{for any }t\in \left[0,\log\left(\sqrt{\frac{\conv}{\conv-1}}\right)\right] &\text{if }\conv >1.
\end{cases}
\end{equation}
\item Suppose $\density$ is $\conv\dd$-log-subharmonic for some $\conv\le 0$ (i.e.,  $x\mapsto \log f(x)-\conv\frac{|x|^2}{2}$ is subharmonic). Then, for every $x\in \R^{\dd}$ and $t>0$,
\begin{equation}
\label{eq:beta_subharmonic}
\Delta\log\Pheat_t\density(x)\ge \frac{e^{-2t}\conv\dd}{1-\conv(1-e^{-2t})}\ge \conv\dd.
\end{equation}
\end{enumerate}
\end{proposition}
The proof of Proposition \ref{prop:OU_smooth} (with some extra results) is given in Section \ref{subsec:appendix_smooth}.

Let us begin with \textbf{Step 1}. We first construct the sequence $\{\source_k\propto e^{-\V_k}\diff x\}$ whose members all satisfy the assumptions of Theorem \ref{thm:main_OT_Lp}  as well as Assumption \ref{assump}(1-3). Given $\source=e^{-\V}\diff x$ let 
\[
\tilde V_k:=-\log\Pheat_{\frac{1}{k}}e^{-\V}.
\]
Then $\tilde V_k$ is smooth, and by Proposition \ref{prop:OU_smooth}(iii) it satisfies $\Delta\tilde\V_k(x)\le \lsh\dd$ for all $x\in \R^{\dd}$. This shows that we can construct a measure $\propto e^{-\tilde \V_k}\diff x$ which satisfies the assumptions of Theorem \ref{thm:main_OT_Lp} as well as Assumption \ref{assump}(1). Moreover, Assumption \ref{assump}(2) is also satisfied for some $c_k$ by  Proposition \ref{prop:OU_smooth}(i). Next we modify $\tilde V_k$ so that  Assumption \ref{assump}(3) is also satisfied. Define 
\[
\V_k(x):=\left(1-\frac{1}{k}\right)\tilde \V_k(x)+\frac{1}{k}\lsh\frac{|x|^2}{2}
\]
and note that  $\Delta\V_k\le \lsh\dd$,  $\V_k$ is smooth, and 
\[
\nabla^2\V_k \preceq \left(\left(1-\frac{1}{k}\right)c_k+\frac{1}{k}\lsh\right)\Id_{\dd},
\]
so the measure $\propto e^{- \V_k}\diff x$ satisfies the assumptions of Theorem \ref{thm:main_OT_Lp} as well as Assumptions \ref{assump}(1-2). Let us show that $\V_k$ also satisfies Assumption \ref{assump}(3). To this end we first need to argue that $\V$ is positive outside of some ball. Indeed, since $\Delta\V\le \lsh\dd$, we have that, for any $x_0\in \R^{\dd}$, the function 
\[
\R^{\dd}\ni x\mapsto e^{\lsh\frac{|x-x_0|^2}{2}-\V(x)}
\]
is subharmonic. Hence, choosing $x_0$ such that $|x_0|>R$ for some $R>0$, we have
\begin{equation}
\label{eq:V_bd}
e^{-\V(x_0)}\le \avgint_{B_1(x_0)}e^{\lsh\frac{|x-x_0|^2}{2}-\V(x)}\diff x\le \frac{e^{\frac{\lsh}{2}}}{\Vol(B_1)}\int_{(B_{R-1}(0))^c}e^{-\V(x)}\diff x.
\end{equation}
The right-hand side of \eqref{eq:V_bd} converges to $0$ as $R\to\infty$ since $\int e^{-V}$ is finite. Hence, $\lim_{x\to\infty}\V(x)=+\infty$ and  Assumption \ref{assump}(3) follows by the construction of $\V_k$. Finally, let $\source_k \propto e^{-\V_k}\diff x$ and note that $\source_k\to \source$ weakly.

Next we construct the sequence $\{\target_k\propto e^{-\W_k}\diff x\}$ whose members all satisfy the assumptions of Theorem \ref{thm:main_OT_Lp}  as well as Assumption \ref{assump}(4-5). Given $\target=e^{-\W}\diff x$  let 
\[
\W_k:=-\log\Pheat_{\frac{1}{k}}e^{-\W}
\]
and let $\{\target_k\propto e^{-\W_k}\diff x\}$. Then, as in the construction of $\V_k$, $\W_k$ is smooth and there exists a constant $d_k$ such that $\nabla^2\W _k\preceq d_k\Id_{\dd}$. Hence, $\W_k$ satisfies Assumption \ref{assump}(4-5). Finally, by Proposition \ref{prop:OU_smooth}(ii), $\W_k$ satisfies the assumptions of Theorem \ref{thm:main_OT_Lp} with $\nabla^2\W_k \succeq \lc_k$ where 
\begin{equation}
\label{eq:lc_k}
\lc_k:= \frac{\lc e^{-\frac{2}{k}}}{1+\lc(1-e^{-\frac{2}{k}})} =\lc-\frac{\lc(\lc +1)(1-e^{-\frac{2}{k}})}{1+\lc(1-e^{-\frac{2}{k}})}.
\end{equation}

We now move to \textbf{Step 2}. Assume that Theorem \ref{thm:main_OT_Lp} holds true for each pair $\source_k,\target_k$, and let $\nabla\OT_k$ be the Brenier map between $\source_k$ and $\target_k$.  Then, for $k$ large enough, by Theorem \ref{thm:main_OT_Lp} and Remark \ref{rem:Lipschiz},
\begin{equation}
\label{eq:Delta_Phi_k}
\|\Delta\OT_k\|_{L^{\infty}(\diff x)}\le \dd\sqrt{ \frac{\lsh}{\lc_k}}\quad\text{and}\quad \nabla^2\OT_k(x) \preceq \dd\sqrt{ \frac{\lsh}{\lc_k}}\Id_{\dd} \text{ for every $x\in \R^{\dd}$}.
\end{equation}
By \cite[Lemma 1]{MS2022}, which follows \cite[Lemma 2.1]{neeman2022lipschitz} building on \cite[Lemma 3.3]{MR2983070}, we get that, up to a subsequence, $\{\nabla\OT_k\}$ converges  almost everywhere to some transport map $T$ between  $\source$ and $\target$.
Since $\{\nabla\OT_k\}$ converges so does $\{\OT_k\}$ to some convex function $\OT$. It follows that $T=\nabla\OT$ is the Brenier map between $\source$ and $\target$. Finally, the proof is complete  since \(\Phi_{k}\) converges to \(\Phi\) locally uniformly, \(\Delta \Phi_{k}\) converges to \(\Delta \Phi\) as distribution, and as
\[
\begin{split}
\|\Delta \Phi\|_{L^{\infty}(\diff x)}&=\sup_{\substack{\eta \in C_{c}^{\infty},\|\eta\|_{L^1}=1}} \int \Delta \Phi(x) \eta(x) \diff x =\sup_{\substack{\eta \in C_{c}^{\infty}, \|\eta\|_{L^1}=1}} \int  \Phi(x) \Delta \eta(x) \diff x
\\
&=\sup_{\substack{\eta \in C_{c}^{\infty}, \|\eta\|_{L^1}=1}} \lim_{k \to \infty}  \int  \Phi_{k}(x) \Delta \eta(x) \diff x =\sup_{\eta \in C_{c}^{\infty}, \|\eta\|_{L^{}(\diff x)}=1} \lim_{k \to \infty} \int  \Delta \Phi_{k}(x)  \eta(x) \diff x
\\
&\le \lim_{k\to \infty} \|\Delta \Phi_{k}\|_{L^{\infty}(\diff x)}=\dd\sqrt{ \frac{\lsh}{\lc}}.
\end{split}
\]
\end{proof}

\begin{proof}[Proof of Theorem \ref{thm:main_OT_Lp}]
In light of Proposition \ref{prop:smooth} we may assume Assumption \ref{assump} from here on. Under the assumptions of Theorem \ref{thm:main_OT_Lp}, together with Assumption \ref{assump}, we have that there exists an optimal transport map $\nabla\OT$ between $\source$ and $\target$ which is smooth \cite[Theorem 1.1]{Dario2019}, satisfies the Monge-Amp\`ere equation,
\begin{equation}
\label{eq:Monge_proof}
e^{-\V(x)}=e^{-\W(\nabla \OT(x))}\det\nabla^2\OT(x), \quad \text{for all }x\in \R^{\dd},
\end{equation}
and, by Theorem \ref{thm:Caffarelli_intro},  satisfies the bound
\begin{equation}
\label{eq:Caffarelli_bound}
0\preceq\nabla^2\OT(x)\preceq C\Id_{\dd}\quad \text{for all }x\in \R^{\dd},
\end{equation}
for some constant $C>0$.
Taking the logarithm in \eqref{eq:Monge_proof} we get
\begin{align}
\label{eq:Monge_log}
\V(x)=\W(\nabla\OT(x))-\log \det \nabla^2\OT(x),
\end{align}
so, for every $x,y\in \R^{\dd}$,
\begin{align}
\begin{split}
V(x+y)+V(x-y)-2V(x)&=\left\{\W(\nabla\OT(x+y))+\W(\nabla\OT(x-y))-2\W(\nabla\OT(x))\right\}\label{eq:Monge_finite_diff}\\
&-\log\left[\frac{\det \nabla^2\OT(x+y)\det \nabla^2\OT(x-y)}{(\det \nabla^2\OT(x))^2}\right].
\end{split}
\end{align}
Equation \eqref{eq:Monge_finite_diff} is the finite difference analogue of equation \eqref{eq:Vee}. The left-hand side of \eqref{eq:Monge_finite_diff} is a second-order finite difference of $\V$ which we wish to relate to $\Delta\V$, so to this end we introduce the following $\Delta_{\epsilon}$ operator and its properties; see Section \ref{subsec:appendix_Delta_eps} for the proof.

\begin{lemma}
\label{lem:avg_Laplacian}
Fix $\epsilon> 0$ and for a function $f:\R^{\dd}\to\R$ denote
\begin{equation}
\label{eq:avg_Laplacian_def}
\Delta_{\epsilon}f(x):=\avgint_{\partial  B_{\epsilon}(0)}[f(x+y)-f(x)]\diff y,
\end{equation}
where
\[
\avgint_{\partial  B_{\epsilon}(0)}f:=\frac{1}{|\partial  B_{\epsilon}(0)|}\int_{\partial  B_{\epsilon}(0)}f.
\]
Then,
\begin{align}
\label{eq:Delta_eps_lim}
\lim_{\epsilon\to 0}\frac{\Delta_{\epsilon}f(x)}{\epsilon^{2}}=\frac{\Delta f(x)}{2\dd},
\end{align}
where $\Delta f$ is the distributional Laplacian of $f$. Further, if $\Delta f\le \ell$, then 
\begin{align}
\label{eq:Delta_eps_bound}
\Delta_{\epsilon}f\le \frac{\ell}{\dd}\frac{\epsilon^2}{2},\quad \quad\forall~\epsilon>0.
\end{align}
\end{lemma}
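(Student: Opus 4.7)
Both assertions reduce, via Taylor expansion, to the rotational-symmetry identity
\begin{equation*}
\avgint_{\partial B_{\epsilon}(0)} y_i y_j \diff y = \frac{\epsilon^2}{\dd}\delta_{ij},
\end{equation*}
which holds because the spherical average of $y_i y_j$ must be isotropic by the $O(\dd)$-invariance of $\partial B_{\epsilon}(0)$, and taking the trace pins down the common value since $|y|^2 \equiv \epsilon^2$ on $\partial B_{\epsilon}(0)$. In particular $\avgint_{\partial B_{\epsilon}(0)}\langle A y,y\rangle \diff y = \tfrac{\epsilon^2}{\dd}\Tr A$ for every symmetric $A$, and $\avgint_{\partial B_{\epsilon}(0)}\langle v,y\rangle \diff y = 0$ for every vector $v$ by antipodal symmetry.

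\textbf{Limit identity \eqref{eq:Delta_eps_lim}.} Assuming first that $f$ is $C^{2}$ near $x$, Taylor's theorem gives $f(x+y)-f(x)=\langle \nabla f(x), y\rangle + \tfrac12 \langle \nabla^2 f(x)\, y, y\rangle + o(|y|^2)$ uniformly for $y\in\partial B_{\epsilon}(0)$ as $\epsilon \to 0$. After averaging, the linear term vanishes by symmetry, the quadratic term contributes $\tfrac{\epsilon^2}{2\dd}\Delta f(x)$, and the remainder is $o(\epsilon^2)$; dividing by $\epsilon^{2}$ and sending $\epsilon \to 0$ proves the claim. For less regular $f$ whose distributional Laplacian is still represented by a function, I would apply the smooth identity to a mollification $f_s := f * \eta_s$ and pass to the limit as $s \to 0$, using that mollification commutes with both spherical averaging and the distributional Laplacian.

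\textbf{Upper bound \eqref{eq:Delta_eps_bound}.} Decompose
\begin{equation*}
f(x)=h(x)+\frac{\ell}{2\dd}|x|^2, \qquad h(x):=f(x)-\frac{\ell}{2\dd}|x|^2,
\end{equation*}
so that $\Delta h = \Delta f - \ell \le 0$ in the distributional sense, i.e.\ $h$ is (distributionally) superharmonic. The classical sub-mean value inequality then gives $\Delta_{\epsilon}h(x)\le 0$. Since $|x+y|^2 - |x|^2 = 2\langle x,y\rangle + |y|^2$, the symmetry observations above yield
\begin{equation*}
\Delta_{\epsilon}\!\Bigl(\tfrac{\ell}{2\dd}|\cdot|^2\Bigr)(x)=\frac{\ell}{2\dd}\avgint_{\partial B_{\epsilon}(0)}|y|^2 \diff y = \frac{\ell}{\dd}\frac{\epsilon^2}{2},
\end{equation*}
and summing the two contributions produces \eqref{eq:Delta_eps_bound}.

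\textbf{Main obstacle.} The only genuine subtlety is the distributional interpretation: in order to apply the pointwise mean value inequality to $h$, one needs $h$ to admit an upper semicontinuous superharmonic representative whose spherical averages coincide with those of the $L^{1}_{\mathrm{loc}}$ function. This is standard in potential theory, but can also be handled ad hoc by mollifying $f$, applying the smooth case, and letting the mollification parameter tend to zero. In the subsequent application of Lemma \ref{lem:avg_Laplacian} inside the proof of Theorem \ref{thm:main_OT_Lp}, Assumption \ref{assump} already guarantees that $\V$ is smooth, so this technical point does not arise.
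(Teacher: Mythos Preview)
Your proof is correct. For the limit identity \eqref{eq:Delta_eps_lim}, your Taylor expansion plus the isotropy identity $\avgint_{\partial B_\epsilon(0)} y_iy_j\,\diff y=\tfrac{\epsilon^2}{\dd}\delta_{ij}$ is essentially what the paper does; the paper phrases it through the symmetric second difference $f(x+\epsilon y)+f(x-\epsilon y)-2f(x)$ and interprets the limit distributionally, but the content is the same.

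For the upper bound \eqref{eq:Delta_eps_bound} your route is genuinely different. You decompose $f=h+\tfrac{\ell}{2\dd}|\cdot|^2$ with $h$ superharmonic, invoke the sub-mean-value inequality for $h$, and compute $\Delta_\epsilon$ of the quadratic exactly. The paper instead writes
\[
\Delta_\epsilon f(x)=\int_0^\epsilon \partial_r\Bigl[\avgint_{\partial B_r(0)} f(x+y)\,\diff y\Bigr]\diff r,
\]
converts the radial derivative into a ball average of $\Delta f$ via the divergence theorem, namely $\partial_r\avgint_{\partial B_r}f(x+\cdot)=\tfrac{r}{\dd}\avgint_{B_1(0)}\Delta f(x+r\,\cdot)$, and only then applies $\Delta f\le \ell$. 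Your argument is cleaner and more structural, but it imports the sub-mean-value inequality for superharmonic functions as a black box; the paper's computation is in effect a self-contained proof of exactly that inequality specialized to this setting. As you observe, in the actual application $\V$ is smooth by Assumption \ref{assump}, so the distributional subtleties you flag are not needed there.
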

We now integrate \eqref{eq:Monge_finite_diff} over $y$ with respect to the uniform measure on the sphere $\partial B_{\epsilon}(x)$ of radius $\epsilon$ centered at $x$. By the assumption $\Delta V\le \lsh\dd$, together with \eqref{eq:Delta_eps_bound}, we have
\begin{align}
\begin{split}
\epsilon^2 \lsh\ge 2\Delta_{\epsilon}V(x)&=2\avgint_{\partial  B_{\epsilon}(0)}\left[\W(\nabla\OT(x+y))-\W(\nabla\OT(x))\right]\diff y\label{eq:Monge_avg}\\
&-\avgint_{\partial  B_{\epsilon}(0)}\log\left[\frac{\det \nabla^2\OT(x+y)\det \nabla^2\OT(x-y)}{(\det \nabla^2\OT(x))^2}\right]\diff y.
\end{split}
\end{align}
On the other hand, by the assumption $\nabla^2\W  \succeq \lc \Id_{\dd}$, 
\begin{equation}
\label{eq:cvx_W}
\W(\nabla\OT(x+y))-\W(\nabla\OT(x))\ge \langle (\nabla \W)(\nabla\OT(x)),\nabla\OT(x+y)-\nabla\OT(x)\rangle+\frac{\lc}{2}|\nabla\OT(x+y)-\nabla\OT(x)|^2,
\end{equation}
which is the analogue of \eqref{eq:Vee_inq}. Combining \eqref{eq:Monge_avg} and \eqref{eq:cvx_W} implies 
\begin{align}
\begin{split}
\epsilon^2 \lsh&\ge \lc\avgint_{\partial  B_{\epsilon}(0)}|\nabla\OT(x+y)-\nabla\OT(x)|^2\diff y\label{eq:Monge_avg_cvx}\\
&+\avgint_{\partial  B_{\epsilon}(0)}\left\{2\langle (\nabla \W)(\nabla\OT(x)),\nabla\OT(x+y)-\nabla\OT(x)\rangle-\log\left[\frac{\det \nabla^2\OT(x+y)\det \nabla^2\OT(x-y)}{(\det \nabla^2\OT(x))^2}\right]\right\}\diff y.
\end{split}
\end{align}
Next we will multiply both sides of \eqref{eq:Monge_avg_cvx} by $(\Delta_{\epsilon}\OT(x))^p$ and integrate against $e^{-V(x)}\diff x$. This requires showing that some integrals are finite. 

\begin{lemma}
\label{lem:integrate_finite}
For every $p>0$ we have
\begin{align}
\begin{split}
&\epsilon^2 \lsh\int (\Delta\OT_{\epsilon}(x))^p e^{-V(x)}\diff x\ge  \lc\int\avgint_{\partial  B_{\epsilon}(0)}\left[|\nabla\OT(x+y)-\nabla\OT(x)|^2 \right]\diff y(\Delta_{\epsilon}\OT(x))^p e^{-V(x)}\diff x\label{eq:2_terms}\\
&+\int (\Delta_{\epsilon}\OT(x))^p e^{-V(x)}\\
&\cdot\left\{\avgint_{\partial  B_{\epsilon}(0)}\left\{ 2\langle \nabla\W(\nabla \OT(x)),\nabla \OT(x+y)-\nabla \OT(x)\rangle-\log\left[\frac{\det \nabla^2\OT(x+y)\det \nabla^2\OT(x-y)}{(\det \nabla^2\OT(x))^2}\right]\right\}\diff y\right\}\diff x,
\end{split}
\end{align}
and all the integrals are finite. 
\end{lemma}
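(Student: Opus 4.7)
The plan is to multiply inequality \eqref{eq:Monge_avg_cvx} pointwise by $(\Delta_{\epsilon}\OT(x))^p$, integrate against $e^{-\V(x)}\diff x$, and invoke Fubini to swap the outer $x$-integration with the spherical averages in $y$. Before doing so I must check two things: that $(\Delta_{\epsilon}\OT)^p$ is a nonnegative multiplier that preserves the sense of the inequality, and that every integrand in the resulting expression lies in $L^1(\R^{\dd})$.

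The first point is immediate: convexity of $\OT$ gives $\OT(x+\epsilon e)+\OT(x-\epsilon e)-2\OT(x)\ge 0$ for every unit $e$, so averaging over $e\in S^{\dd-1}$ yields $\Delta_{\epsilon}\OT(x)\ge 0$. The dual inequality coming from the Caffarelli bound \eqref{eq:Caffarelli_bound}, $\nabla^2\OT\preceq C\Id_{\dd}$, additionally gives the uniform pointwise estimate $0\le\Delta_{\epsilon}\OT(x)\le C\epsilon^2/2$ as well as $|\nabla\OT(x+y)-\nabla\OT(x)|\le C\epsilon$. With these two uniform bounds in hand, the left-hand side of \eqref{eq:2_terms} and the first term on its right-hand side are both dominated by bounded functions times $e^{-\V}$, hence are finite since $\source$ is a probability measure. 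For the linear $\nabla\W$-term, the Hessian bound $\nabla^2\W\preceq d\Id_{\dd}$ from Assumption \ref{assump}(5) gives $|\nabla\W(v)|\le|\nabla\W(0)|+d|v|$, and pushing forward by $(\nabla\OT)_{\sharp}\source=\target$ reduces its $L^1$ bound to finiteness of the first moment of $\target$, which follows from $\nabla^2\W\succeq\lc\Id_{\dd}$.

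The main obstacle is the log-determinant term, since $\det\nabla^2\OT$ can be small and $\log$ is not uniformly controlled from below. The key trick is to invoke the Monge-Amp\`ere identity \eqref{eq:Monge_log}, which gives $\log\det\nabla^2\OT(z)=\W(\nabla\OT(z))-\V(z)$ pointwise, and hence
\begin{align*}
\log\frac{\det\nabla^2\OT(x+y)\det\nabla^2\OT(x-y)}{(\det\nabla^2\OT(x))^2}&=\bigl[\W(\nabla\OT(x+y))+\W(\nabla\OT(x-y))-2\W(\nabla\OT(x))\bigr]\\
&\quad-\bigl[\V(x+y)+\V(x-y)-2\V(x)\bigr].
\end{align*}
A Taylor estimate using $\nabla^2\W\preceq d\Id_{\dd}$ bounds the $\W$-second-difference by a constant multiple of $|\nabla\OT(x\pm y)-\nabla\OT(x)|^2+|\nabla\W(\nabla\OT(x))|\cdot|\nabla\OT(x\pm y)-\nabla\OT(x)|$, both terms already shown to be integrable against $\source$. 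For the $\V$-second-difference, Assumption \ref{assump}(2) yields the pointwise upper bound $\V(x+y)+\V(x-y)-2\V(x)\le c|y|^2$, while the $L^1(e^{-\V}\diff x)$ estimate on $|\V(x\pm y)-\V(x)|$ is obtained by writing it as $\bigl|\int_0^1\langle\nabla\V(x\pm sy),\pm y\rangle\,ds\bigr|$ and combining the Lipschitz bound $|\nabla\V(x)|\lesssim 1+|x|$ from Assumption \ref{assump}(2) with the Gaussian decay of $e^{-\V}$ at infinity from Assumption \ref{assump}(3). Once every integrand is shown to lie in $L^1(\R^{\dd})$, Fubini yields \eqref{eq:2_terms}.
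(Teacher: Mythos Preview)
Your proposal is correct and follows essentially the same route as the paper: multiply \eqref{eq:Monge_avg_cvx} by the nonnegative weight $(\Delta_\epsilon\OT)^p$, integrate against $e^{-\V}\diff x$, and verify integrability term by term using the Caffarelli bound \eqref{eq:Caffarelli_bound}, the linear growth of $\nabla\W$, and the Monge--Amp\`ere identity to rewrite the log-determinant as second differences of $\W$ and $\V$. One minor imprecision: the estimate $|\nabla\V(x)|\lesssim 1+|x|$ does not follow from Assumption~\ref{assump}(2) alone (an upper Hessian bound gives no two-sided control on the gradient) but also requires a global lower bound on $\V$, which is supplied by Assumption~\ref{assump}(3) together with smoothness; with that in hand your argument goes through exactly as written.
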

\begin{proof}
Once we show that the integrals are finite, Equation \eqref{eq:2_terms} follows by multiplying both sides of \eqref{eq:Monge_avg_cvx} by $(\Delta_{\epsilon}\OT)^p$ and integrating against $e^{-V}\diff x$. Note that at the moment we are only interested  in these quantities to be finite and not uniformly bounded in \(\epsilon\). The integral $\int (\Delta_{\epsilon}\OT)^p e^{-V}\diff x$ is finite by \eqref{eq:Caffarelli_bound} and \eqref{eq:Delta_eps_bound}. The  integral $\int\avgint_{\partial  B_{\epsilon}(0)}\left[|\nabla\OT(x+y)-\nabla\OT(x)|^2 \right]\diff y(\Delta_{\epsilon}\OT)^p e^{-V}\diff x$ is finite since by the fundamental theorem of calculus we can write the difference $\nabla\OT(x+y)-\nabla\OT(x)$ as an integral along a path between $x+y$ and $x$ of $\nabla^2\OT$ multiplied by a vector of length $\le\epsilon$, so the difference is bounded since $\nabla^2\OT$ is bounded by  \eqref{eq:Caffarelli_bound}. To bound the term $\langle \nabla\W(\nabla \OT(x)),\nabla \OT(x+y)-\nabla \OT(x)\rangle$ use Assumption \ref{assump}(5) according to which $\nabla W$ grows at most linearly, and the fact that again   $\nabla \OT(x+y)-\nabla \OT(x)$ can be bounded by a constant, to see that $\int (\Delta_{\epsilon}\OT)^p\avgint_{\partial  B_{\epsilon}(0)} 2\langle \nabla\W(\nabla \OT(x)),\nabla \OT(x+y)-\nabla \OT(x)\rangle e^{-V}$ is finite since $\target$ has a finite first moment. Finally, to bound the term
  \[
    \int\avgint_{\partial  B_{\epsilon}(0)}\left| \log\left[\frac{\det \nabla^2\OT(x+y)\det \nabla^2\OT(x-y)}{(\det \nabla^2\OT(x))^2}\right]\right| e^{-V(x)}\diff x \diff y 
  \]
  we use identity \eqref{eq:Monge_avg} so we need to bound
  \[
    \int\avgint_{\partial  B_{\epsilon}(0)}|\W(\nabla\OT(x+y))-\W(\nabla\OT(x))|(\Delta_{\epsilon}\OT)^p  e^{-V}\diff y\diff x
  \]
  as well as
  \[
    \int |\Delta_{\epsilon}\V| e^{-V}\diff x.
  \]
  For the first term, again by \eqref{eq:Caffarelli_bound}, the distance between $\nabla\OT(x+y)$ and $\nabla\OT(x)$ is bounded by a constant, so by Assumption \ref{thm:main_OT_Lp}(5), $\avgint_{\partial  B_{\epsilon}(0)}|\W(\nabla\OT(x+y))-\W(\nabla\OT(x))|$ is linear in $\nabla\OT$ (at some point between $x+y$ and $x$) and, since $\nabla^2\Phi$ is bounded,  $\int\avgint_{\partial  B_{\epsilon}(0)}|\W(\nabla\OT(x+y))-\W(\nabla\OT(x))|(\Delta_{\epsilon}\OT)^p  e^{-V}\diff y\diff x$ is finite since $\target$ has a finite first moment. Finally, for the term $ \int |\Delta_{\epsilon}\V |e^{-V}\diff x$,  we use the definition of $\Delta_{\epsilon}$ and note that at infinity $\V$ grows quadratically so  $\int |\Delta_{\epsilon}\V |e^{-V}\diff x$ is finite since $\source$ has a finite second moment.
\end{proof}

We will show in Proposition \ref{prop:noneg_term} below that the second term in \eqref{eq:2_terms} is nonnegative, which is the replacement of \eqref{eq:opt_condition}-\eqref{eq:Tr_noneg}. Let us complete the proof of Theorem \ref{thm:main_OT_Lp} assuming the validity of Proposition \ref{prop:noneg_term}. 
\begin{lemma}
\label{lem:_term_to_thm_proof}
Suppose that for every $p>0$,
\begin{align}
\label{eq:1_term}
&\epsilon^2 \lsh\int (\Delta_{\epsilon}\OT(x))^p e^{-V(x)}\diff x\ge  \lc\int\avgint_{\partial  B_{\epsilon}(0)}\left[|\nabla\OT(y+x)-\nabla\OT(x)|^2 \right]\diff y(\Delta_{\epsilon}\OT(x))^p e^{-V(x)}\diff x.
\end{align}
Then,
\begin{align}
\label{eq:infty_bound}
\|\Delta\OT\|_{L^{\infty}(\source)}\le \dd\sqrt{ \frac{\lsh}{\lc}}.
\end{align}
\end{lemma}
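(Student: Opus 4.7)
The plan is to divide both sides of \eqref{eq:1_term} by $\epsilon^{2p+2}$, send $\epsilon \to 0$, and then iterate the resulting inequality in $p$ and send $p \to \infty$.

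First I would identify the two pointwise limits appearing on each side. Lemma \ref{lem:avg_Laplacian} immediately gives $\Delta_{\epsilon}\OT(x)/\epsilon^2 \to \Delta\OT(x)/(2\dd)$. For the spherical average on the right, a Taylor expansion $\nabla\OT(x+y) = \nabla\OT(x) + \nabla^2\OT(x)y + O(|y|^2)$, combined with the moment identity $\avgint_{\partial B_{\epsilon}(0)} y_i y_j \diff y = \tfrac{\epsilon^2}{\dd}\delta_{ij}$ and the vanishing of odd moments on the sphere, should yield
\[
\lim_{\epsilon \to 0}\frac{1}{\epsilon^2}\avgint_{\partial B_{\epsilon}(0)}|\nabla\OT(x+y)-\nabla\OT(x)|^2 \diff y = \frac{\|\nabla^2\OT(x)\|_{\textnormal{F}}^2}{\dd}.
\]
Under Assumption \ref{assump}, $\OT$ is smooth and Theorem \ref{thm:Caffarelli_intro} provides the a priori bound $\nabla^2\OT \preceq C\Id_{\dd}$ recorded in \eqref{eq:Caffarelli_bound}. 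Hence both $\Delta_{\epsilon}\OT/\epsilon^2$ and $\epsilon^{-2}\avgint_{\partial B_{\epsilon}(0)}|\nabla\OT(x+y)-\nabla\OT(x)|^2 \diff y$ are bounded uniformly in $x$ and $\epsilon$, and since $\source$ is a probability measure, dominated convergence allows me to pass to the limit in \eqref{eq:1_term} divided by $\epsilon^{2p+2}$ to obtain
\[
\lsh\dd \int (\Delta\OT)^p \diff\source \;\ge\; \lc \int \|\nabla^2\OT\|_{\textnormal{F}}^2 \, (\Delta\OT)^p \diff\source.
\]

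Next I would invoke convexity: since $\OT$ is convex, the eigenvalues of $\nabla^2\OT$ are nonnegative, so Cauchy--Schwarz gives $\|\nabla^2\OT\|_{\textnormal{F}}^2 \ge (\Delta\OT)^2/\dd$. Setting $M_p := \int (\Delta\OT)^p \diff\source$, this yields the iteration $M_{p+2} \le \frac{\lsh\dd^2}{\lc}\, M_p$ valid for every $p>0$. Because $\source$ is a probability measure and $\Delta\OT \ge 0$, the $L^p(\source)$ norm is monotone nondecreasing in $p$, which rearranges to $M_p^{(p+2)/p} \le M_{p+2}$. Combining the two bounds gives $M_p^{2/p} \le \lsh\dd^2/\lc$, i.e., $\|\Delta\OT\|_{L^p(\source)} \le \dd\sqrt{\lsh/\lc}$ for every $p>0$; letting $p \to \infty$ proves \eqref{eq:infty_bound}.

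The only genuinely technical step is the dominated convergence in passing to the $\epsilon \to 0$ limit, but this is routine thanks to the smoothness from Assumption \ref{assump} and the Caffarelli bound \eqref{eq:Caffarelli_bound}; the rest is a single Cauchy--Schwarz estimate followed by a standard Moser-type iteration.
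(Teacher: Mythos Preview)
Your proposal is correct and follows essentially the same route as the paper: divide \eqref{eq:1_term} by $\epsilon^{2p+2}$, pass to the limit using Lemma~\ref{lem:avg_Laplacian} and the spherical moment identity to obtain $\alpha \dd\int (\Delta\OT)^p\diff\source \ge \lc\int \|\nabla^2\OT\|_{\mathrm F}^2(\Delta\OT)^p\diff\source$, apply Cauchy--Schwarz on the eigenvalues to get the recursion $M_{p+2}\le \tfrac{\alpha\dd^2}{\lc}M_p$, and combine with H\"older (your $L^p$-monotonicity) to conclude. The paper's proof differs only cosmetically, writing the Frobenius-norm limit via a change of variables $y\mapsto \epsilon y$ rather than a Taylor expansion, and phrasing the final step in terms of $\|(\Delta\OT)^2\|_{L^{p+1}(\source)}$ after relabeling $p\mapsto 2p$.
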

\begin{proof}
Dividing both sides of \eqref{eq:1_term} by $\epsilon^{2p+2}$, and using \eqref{eq:Delta_eps_lim}, we get 
\begin{align*}
 \lsh \int \left(\frac{\Delta \OT}{2\dd}\right)^p e^{-V}\diff x&\ge \lc  \int\left\{\lim_{\epsilon\to 0}\frac{1}{\epsilon^2}\avgint_{\partial  B_{\epsilon}(0)}\left[|\nabla\OT(y+x)-\nabla\OT(x)|^2 \right]\diff y\right\}  \left(\frac{\Delta \OT}{2\dd}\right)^p e^{-V}\diff x\\
&=\lc \int\left\{\avgint_{\partial  B_{1}(0)}\lim_{\epsilon\to 0}\left|\frac{\nabla\OT(x+\epsilon y)-\nabla\OT(x)}{\epsilon}\right|^2\diff y\right\} \left(\frac{\Delta \OT)}{2\dd}\right)^p e^{-V}\diff x\\
&=\lc \int\left\{\avgint_{\partial  B_{1}(0)}\left| \nabla^2\OT(x)y\right|^2\diff y\right\}  \left(\frac{\Delta \OT}{2\dd}\right)^p e^{-V}\diff x.
\end{align*}
Since
\begin{align*}
\avgint_{\partial  B_{1}(0)}\left| \nabla^2\OT(x)y\right|^2\diff y=\frac{\Tr[(\nabla^2\OT(x))^2]}{
\dd}\ge \frac{(\Delta \OT(x))^2}{\dd^2},
\end{align*}
it follows that
\[
 \lsh\int  \left(\frac{\Delta \OT}{2\dd}\right)^p e^{-V}\diff x\ge \lc \int \frac{(\Delta \OT)^2}{\dd^2}  \left(\frac{\Delta \OT}{2\dd}\right)^p e^{-V}\diff x,
\]
and hence,
\begin{align}
\label{eq:p_p+2}
\int (\Delta \OT)^p e^{-V}\ge\frac{1}{\dd^2} \frac{\lc}{\lsh}  \int (\Delta \OT)^{p+2}  e^{-V}\diff x.
\end{align}
By H\^older's inequality, with exponents $\frac{p+2}{2},\frac{p+2}{p}$, we have
\begin{align}
\label{eq:Holder}
\int (\Delta \OT)^p e^{-V}\le \left(\int \left[(\Delta \OT)^p\right]^{\frac{p+2}{p}} e^{-V} \right)^{\frac{p}{p+2}}\left(\int 1^{\frac{p}{p+2}}e^{-V}\right)^{\frac{p+2}{p}}=\left(\int (\Delta\OT)^{p+2}e^{-V}\right)^{\frac{p}{p+2}},
\end{align}
so combining \eqref{eq:Holder} and \eqref{eq:p_p+2} we get 
\[
\left(\int (\Delta\OT)^{p+2}e^{-V}\right)^{\frac{p}{p+2}}\ge  \frac{1}{\dd^2}\frac{\lc}{\lsh}   \int (\Delta \OT)^{p+2}  e^{-V}.
\]
Relabeling $p\mapsto 2p$, the above can be written as 
\begin{align}
\label{eq:q_bound_proof}
\|(\Delta\OT)^2\|_{L^{p+1}(\source)}\le\dd^2 \frac{\lsh}{\lc}.
\end{align}
 Inequality \eqref{eq:infty_bound} follows by taking $p\to\infty$,
\begin{align}
\label{eq:infty_bound_proof}
\|\Delta\OT\|_{L^{\infty}(\source)}\le \dd\sqrt{ \frac{\lsh}{\lc}}.
\end{align}
\end{proof}
It remains to show that the second term in \eqref{eq:2_terms} is nonnegative, which was first observed by Kolesnikov in \cite{MR3201654}. Here we essentially follow his proof with some minor adaptations to our case.
\begin{proposition}
\label{prop:noneg_term}
\begin{align}
\begin{split}
&\int (\Delta_{\epsilon}\OT(x))^p e^{-V(x)}\\\label{eq:noneg_term}
&\cdot\left\{\avgint_{\partial  B_{\epsilon}(0)}\left\{ 2\langle \nabla\W(\nabla \OT(x)),\nabla \OT(x+y)-\nabla \OT(x)\rangle-\log\left[\frac{\det \nabla^2\OT(x+y)\det \nabla^2\OT(x-y)}{(\det \nabla^2\OT(x))^2}\right]\right\}\diff y\right\}\diff x\ge 0.
\end{split}
\end{align}
\end{proposition}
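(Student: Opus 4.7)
The plan is to combine concavity of $\log\det$ with a cofactor-matrix (Piola) integration by parts, in the spirit of Kolesnikov. First, I would linearize the log-determinant: since $\log\det$ is concave on positive definite matrices,
\[
\log\det\nabla^2\OT(x\pm y) - \log\det\nabla^2\OT(x) \le \Tr\bigl[(\nabla^2\OT(x))^{-1}(\nabla^2\OT(x\pm y) - \nabla^2\OT(x))\bigr].
\]
Summing the $+y$ and $-y$ versions bounds the log-ratio inside the integrand of \eqref{eq:noneg_term}. Since $\OT$ is convex we have $\Delta_{\epsilon}\OT\ge 0$, so $(\Delta_{\epsilon}\OT)^{p}e^{-\V}$ is a nonnegative weight and the inequality is preserved after multiplication and integration. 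Averaging over $\partial B_{\epsilon}(0)$ and exploiting the symmetry $y\mapsto -y$ to rewrite spherical averages of first differences as $2\Delta_{\epsilon}\nabla\OT$ and $2\nabla^{2}\Delta_{\epsilon}\OT$ (using that $\Delta_{\epsilon}$ commutes with $\nabla$), the proposition reduces to showing
\[
A := \int (\Delta_{\epsilon}\OT)^{p}\,e^{-\V}\bigl\{\langle \nabla\W(\nabla\OT),\Delta_{\epsilon}\nabla\OT\rangle - \Tr[(\nabla^2\OT)^{-1}\nabla^2\Delta_{\epsilon}\OT]\bigr\}\diff x \ge 0.
\]

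Second, I would rewrite the trace term in divergence form using Piola's identity $\sum_{i}\partial_{i}\Cof(\nabla^2\OT)_{ij}=0$ (valid since $\nabla^{2}\OT$ is a Hessian) together with $(\nabla^2\OT)^{-1} = \Cof(\nabla^2\OT)/\det\nabla^2\OT$:
\[
\Tr[(\nabla^2\OT)^{-1}\nabla^2\Delta_{\epsilon}\OT] = \frac{1}{\det\nabla^2\OT}\,\Div\bigl[\Cof(\nabla^2\OT)\nabla\Delta_{\epsilon}\OT\bigr].
\]
The Monge-Amp\`ere equation \eqref{eq:Monge_proof} trivializes the determinant factor: $e^{-\V}/\det\nabla^2\OT = e^{-\W(\nabla\OT)}$. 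Integrating the trace term by parts against $(\Delta_{\epsilon}\OT)^{p}e^{-\V}$ produces two contributions. Differentiating $(\Delta_{\epsilon}\OT)^{p}$ yields $-p\int(\Delta_{\epsilon}\OT)^{p-1}e^{-\W(\nabla\OT)}\langle \Cof(\nabla^2\OT)\nabla\Delta_{\epsilon}\OT,\nabla\Delta_{\epsilon}\OT\rangle\diff x$, a nonnegative quadratic form. Differentiating $e^{-\W(\nabla\OT)}$ yields, upon using the identity $\nabla^2\OT\cdot\Cof(\nabla^2\OT) = (\det\nabla^2\OT)\Id_{\dd}$ and the commutation $\nabla\Delta_{\epsilon}=\Delta_{\epsilon}\nabla$, exactly $\int(\Delta_{\epsilon}\OT)^{p}e^{-\V}\langle \nabla\W(\nabla\OT),\Delta_{\epsilon}\nabla\OT\rangle\diff x$, which cancels the convex-gradient term in $A$. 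What remains is
\[
A = p\int (\Delta_{\epsilon}\OT)^{p-1}\,e^{-\W(\nabla\OT)}\langle \Cof(\nabla^2\OT)\nabla\Delta_{\epsilon}\OT,\nabla\Delta_{\epsilon}\OT\rangle\diff x \ge 0,
\]
since $\Cof(\nabla^2\OT)$ is PSD as the cofactor of a symmetric PSD matrix, and $(\Delta_{\epsilon}\OT)^{p-1}\ge 0$.

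The main obstacle is making the integration by parts rigorous: one has to check integrability of each intermediate quantity and vanish the boundary terms at infinity. Under Assumption \ref{assump}, $\nabla^{2}\OT$ is uniformly bounded by Caffarelli's theorem, $\V$ grows at least quadratically at infinity, $\nabla\W$ grows at most linearly, and $\source$, $\target$ have finite moments; these are precisely the ingredients already exploited in Lemma \ref{lem:integrate_finite}, so the boundary terms should be killed by a standard cutoff argument. A secondary technicality is that for $p<1$ the factor $(\Delta_{\epsilon}\OT)^{p-1}$ is singular on $\{\Delta_{\epsilon}\OT=0\}$; I would handle this by replacing $(\Delta_{\epsilon}\OT)^{p}$ with $(\delta+\Delta_{\epsilon}\OT)^{p}$, proving the inequality for each $\delta>0$, and passing $\delta\to 0$ via dominated convergence.
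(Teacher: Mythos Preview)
Your approach is correct and uses the same ingredients as the paper's proof---concavity of $\log\det$ (equivalently, $\Tr A-\dd-\log\det A\ge 0$), the divergence-free property of $\Cof(\nabla^2\OT)$, the Monge--Amp\`ere relation, integration by parts with cutoffs, and the nonnegative quadratic form $\langle\Cof(\nabla^2\OT)\nabla\Delta_{\epsilon}\OT,\nabla\Delta_{\epsilon}\OT\rangle$---only reorganized: you linearize the log-determinant first and integrate by parts on the resulting trace term, whereas the paper integrates by parts on the $\nabla\W$ term (carrying $y$ explicitly and passing to target coordinates via $\nabla\Psi=(\nabla\OT)^{-1}$) and applies the inequality $\Tr A-\dd-\log\det A\ge 0$ only at the end. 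After unwinding the change of variables the two computations yield the identical nonnegative remainder, so the arguments are essentially the same.
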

\begin{proof}
Let $\nabla\Psi=(\nabla \OT)^{-1}$ be the optimal transport map between  $\diff \target=e^{-\W}\diff x$ to $\diff \source=e^{-\V}\diff x$. The key estimate required to obtain \eqref{eq:noneg_term} is contained in the following proposition.
\begin{proposition}
\label{prop:nablaW_term}
\begin{align}
\begin{split}
&\int (\Delta_{\epsilon}\OT(x))^p e^{-V(x)}\avgint_{\partial  B_{\epsilon}(0)} 2\langle \nabla\W(\nabla \OT(x)),\nabla \OT(x+y)-\nabla \OT(x)\rangle\diff y\diff x\\ \label{eq:nablaW_term}
&\ge 2\avgint_{\partial  B_{\epsilon}(0)}\left\{\int (\Delta_{\epsilon}\OT(\nabla \Psi(x)))^p  \left\{\Tr\left[\nabla^2\OT(\nabla\Psi(x)+y)\nabla^2\Psi(x)\right]-\dd\right\}\diff \target(x)\right\} \diff y.
\end{split}
\end{align}
\end{proposition}
Assuming the estimate \eqref{eq:nablaW_term} let us prove the estimate \eqref{eq:noneg_term}. Using the symmetry of $\partial  B_{\epsilon}(0)$ under reflection $y\mapsto -y$  we have 
\begin{align*}
&\int (\Delta_{\epsilon}\OT)^p e^{-V}\\
&\cdot\left\{\avgint_{\partial  B_{\epsilon}(0)}\left\{ 2\langle \nabla\W(\nabla \OT),\nabla \OT(x+y)-\nabla \OT(x)\rangle-\log\left[\frac{\det \nabla^2\OT(x+y)\det \nabla^2\OT(x-y)}{(\det \nabla^2\OT(x))^2}\right]\right\}\diff y\right\}\diff x\\
&\overset{\eqref{eq:nablaW_term}}{\ge} 2\avgint_{\partial  B_{\epsilon}(0)}\left\{\int (\Delta_{\epsilon}\OT(\nabla \Psi))^p  \left\{\Tr\left[\nabla^2\OT(\nabla\Psi(x)+y)\nabla^2\Psi(x)\right]-\dd\right\}\diff \target\right\} \diff y\\
&-2\int (\Delta_{\epsilon}\OT)^p \avgint_{\partial  B_{\epsilon}(0)}\log\det\left[\nabla^2\OT(x+y)(\nabla^2\OT(x))^{-1}\right]\diff y\,e^{-V}\diff x\\
&= 2\avgint_{\partial  B_{\epsilon}(0)}\left\{\int  \left\{\Tr\left[\nabla^2\OT(\nabla\Psi(x)+y)\nabla^2\Psi(x)\right]-\dd\right\}(\Delta_{\epsilon}\OT(\nabla \Psi))^p \diff \target\right\} \diff y\\
&-2\avgint_{\partial  B_{\epsilon}(0)}\left\{\int\log\det\left[\nabla^2\OT(\nabla \Psi(x)+y)(\nabla^2\OT(\nabla \Psi(x)))^{-1}\right](\Delta_{\epsilon}\OT(\nabla\Psi))^p\diff \target\right\}\diff y\\
&=2\avgint_{\partial  B_{\epsilon}(0)}\left\{\left[\Tr[A(x)]-\dd-\log \det A(x)\right](\Delta_{\epsilon}\OT(\nabla\Psi))^p\diff \target\right\}\diff y,
\end{align*}
where $A(x)=\nabla^2\OT(\nabla \Psi(x)+y)(\nabla^2\OT(\nabla \Psi(x)))^{-1}$. By \cite[p. 8]{kolesnikov2011mass}, $\Tr[A(x)]-\dd-\log \det A(x)\ge 0$ for every $x$, which completes the proof. 
\end{proof}
It remains to prove  Proposition \ref{prop:nablaW_term}.
\begin{proof}[Proof of Proposition \ref{prop:nablaW_term}]
Let $\nabla\Psi=(\nabla \OT)^{-1}$ be the optimal transport map between  $\diff \target=e^{-\W}\diff x$ to $\diff \source=e^{-\V}\diff x$. The estimate \eqref{eq:nablaW_term} will obtained as a consequence of integration by parts. But in order to justify the vanishing of the boundary terms in the integration by parts we need to work with cutoff functions. Let $\{\eta_k\}$ be a sequence of compactly supported smooth functions $\eta_k:\R^{\dd}\to \R$ such that 
\begin{itemize}
\item $0\le \eta_k\le 1$ for every $k$.
\item $\lim_{k\to\infty}\eta_k=1$ uniformly on every compact set.
\item $\lim_{k\to\infty}\int |\nabla\eta_k(x)|^2 \diff\target(x)=0$.
\end{itemize}
In the computations below we will simplify the notation and write
\[
\findiff_y(x):=\nabla \OT(x+y)-\nabla \OT(x),
\]
omitting the dependence on $x$ when it is clear from context, $\findiff_y:=\findiff_y(x)$.

By the Monge-Amp\`ere equation \eqref{eq:Monge_proof} (and since the integrals can be exchanged by the proof of Lemma \ref{lem:integrate_finite}),  multiplying the integrand in the left-hand side of \eqref{eq:nablaW_term} by $\eta_k(\nabla\OT(x))$, and integrating against $e^{-\V(x)}\diff x$, yields
\begin{align*}
&\int\left\{ (\Delta_{\epsilon}\OT)^p\avgint_{\partial  B_{\epsilon}(0)}2\langle \nabla\W(\nabla \OT),\findiff_y\rangle\diff y\right\}\eta_k(\nabla\OT)\diff \source\\
&=\int\left\{ (\Delta_{\epsilon}\OT)^p\avgint_{\partial  B_{\epsilon}(0)}2\langle \nabla\W(\nabla \OT),\findiff_y\rangle\diff y\right\}\eta_k(\nabla\OT)e^{-\W(\nabla\OT)}\det\nabla^2\OT\diff x\\
&=-\int\left\{ (\Delta_{\epsilon}\OT)^p\avgint_{\partial  B_{\epsilon}(0)}2\langle \nabla(e^{-\W(\nabla\OT)}),(\nabla^2\OT)^{-1}\findiff_y\rangle\diff y\right\}\eta_k(\nabla\OT)\det\nabla^2\OT\diff x\\
&=-\avgint_{\partial  B_{\epsilon}(0)}\left\{\int  2\langle \nabla(e^{-\W(\nabla\OT)}),\Cof(\nabla^2\OT)\findiff_y\rangle(\Delta_{\epsilon}\OT)^p\eta_k(\nabla\OT)\diff x\right\}\diff y.
\end{align*}
By integration by parts in the $x$ variable (which has no boundary terms because $\eta_k$ is compactly supported), and using the fact that the cofactor matrix is divergence free, we get
\begin{align*}
  -\int&  \langle \nabla(e^{-\W(\nabla\OT)}),\Cof(\nabla^2\OT)\findiff_y\rangle(\Delta_{\epsilon}\OT)^p\eta_k(\nabla\OT)
  \\
  &=\int e^{-\W(\nabla\OT)} \Div\left[\Cof(\nabla^2\OT)\findiff_y(\Delta_{\epsilon}\OT)^p\eta_k(\nabla\OT)\right]
  \\
  &=\int e^{-\W(\nabla\OT)} \Tr\left[\Cof(\nabla^2\OT)\nabla[\findiff_y(\Delta_{\epsilon}\OT)^p\eta_k(\nabla\OT)]\right]
  \\
  &=\int e^{-\W(\nabla\OT)} \Tr\left[\Cof(\nabla^2\OT)\nabla[\findiff_y(\Delta_{\epsilon}\OT)^p]\right]\eta_k(\nabla\OT)
  \\
&\quad +\int e^{-\W(\nabla\OT)} \Tr\left[\Cof(\nabla^2\OT)\left\{\left((\Delta_{\epsilon}\OT)^p\findiff_y\right)\otimes\nabla[\eta_k(\nabla\OT)]\right\}\right],
\end{align*}
where we use the convention
$(\nabla u(x))_{ij}=\partial_iu_j(x)$ for a vector field $u$, and $(w\otimes v)_{ij}=w_jv_i$ for vectors $v,w$. Hence, our goal is to lower bound the term
\begin{align}
\begin{split}
&\int\left\{ (\Delta_{\epsilon}\OT)^p\avgint_{\partial  B_{\epsilon}(0)}2\langle \nabla\W(\nabla \OT),\findiff_y\rangle\diff y\right\}\eta_k(\nabla\OT)\diff \source\\
&=2\avgint_{\partial  B_{\epsilon}(0)}\int e^{-\W(\nabla\OT)} \Tr\left[\Cof(\nabla^2\OT)\nabla[\findiff_y(\Delta_{\epsilon}\OT)^p]\right]\eta_k(\nabla\OT)\diff x\diff y\\\label{eq:2terms}
&+2\avgint_{\partial  B_{\epsilon}(0)}\int e^{-\W(\nabla\OT)} \Tr\left[\Cof(\nabla^2\OT)\left\{\left((\Delta_{\epsilon}\OT)^p\findiff_y\right)\otimes\nabla[\eta_k(\nabla\OT)]\right\}\right]\diff x\diff y.
\end{split}
\end{align}
The second term on the right-hand side of \eqref{eq:2terms} will be shown to vanish as $k\to\infty$ so it suffices to lower bound the first term on the right-hand side of \eqref{eq:2terms}, and then take the limit $k\to \infty$.  We start with the first term in \eqref{eq:2terms}. 

\begin{lemma}
\label{lem:1term}
\begin{align}
\begin{split}
&\avgint_{\partial  B_{\epsilon}(0)}\int e^{-\W(\nabla\OT(x))} \Tr\left[\Cof(\nabla^2\OT(x))\nabla[\left(\nabla \OT(x+y)-\nabla \OT(x)\right)(\Delta_{\epsilon}\OT(x))^p]\right]\eta_k(\nabla\OT(x))\diff x\diff y\\\label{eq:1term}
&\ge \avgint_{\partial  B_{\epsilon}(0)}\int\left \{\Tr\left[\nabla^2\OT(\nabla\Psi(x)+y)\nabla^2\Psi(x)\right]-\dd\right\}(\Delta_{\epsilon}\OT(\nabla\Psi(x)))^p \eta_k(x)\diff \target(x)\diff y.
\end{split}
\end{align}
\end{lemma}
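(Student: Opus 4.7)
The plan is to apply Leibniz's rule inside the trace, separating the integrand into a \emph{principal} piece that reproduces the right-hand side via the Monge-Amp\`ere equation and a push-forward change of variables, and a \emph{remainder} piece that will be pointwise nonnegative after averaging in $y$ over $\partial B_\epsilon(0)$ and hence can be discarded. Writing $\findiff_y = \nabla\OT(x+y)-\nabla\OT(x)$ and using the paper's conventions, the product rule gives
\begin{equation*}
\nabla\!\left[\findiff_y\,(\Delta_\epsilon\OT)^p\right] = (\Delta_\epsilon\OT)^p\,\nabla\findiff_y + \findiff_y \otimes \nabla(\Delta_\epsilon\OT)^p,
\end{equation*}
and symmetry of $\Cof(\nabla^2\OT)$ yields $\Tr[\Cof(\nabla^2\OT)(\findiff_y\otimes\nabla g)] = \langle \findiff_y, \Cof(\nabla^2\OT)\nabla g\rangle$ for any scalar $g$.

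For the remainder piece I use the identity $\avgint_{\partial B_\epsilon(0)} \findiff_y(x)\diff y = \Delta_\epsilon(\nabla\OT)(x) = \nabla\Delta_\epsilon\OT(x)$, which follows because $\nabla$ commutes with spherical averaging. Averaging in $y$ therefore gives
\begin{equation*}
p(\Delta_\epsilon\OT(x))^{p-1}\langle\nabla\Delta_\epsilon\OT(x),\Cof(\nabla^2\OT(x))\nabla\Delta_\epsilon\OT(x)\rangle,
\end{equation*}
which is pointwise nonnegative: $\Cof(\nabla^2\OT) \succeq 0$ since $\nabla^2\OT \succeq 0$ by convexity of $\OT$, and $\Delta_\epsilon\OT \ge 0$ since the spherical average of a convex function exceeds its center value. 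The extra weight $e^{-\W(\nabla\OT)}\eta_k(\nabla\OT) \ge 0$ preserves the sign, so the remainder's contribution to the left-hand side of \eqref{eq:1term} is nonnegative and can be dropped.

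For the principal piece I factor $\Cof(\nabla^2\OT) = \det(\nabla^2\OT)\,(\nabla^2\OT)^{-1}$, apply $\Tr[\Cof(\nabla^2\OT)\nabla^2\OT] = \dd\det\nabla^2\OT$, and absorb the Jacobian through the Monge-Amp\`ere identity $e^{-\W(\nabla\OT)}\det\nabla^2\OT = e^{-\V}$, so the integrand reduces to
\begin{equation*}
\bigl[\Tr[(\nabla^2\OT(x))^{-1}\nabla^2\OT(x+y)] - \dd\bigr]\,(\Delta_\epsilon\OT(x))^p\,\eta_k(\nabla\OT(x))\,e^{-\V(x)}.
\end{equation*}
The algebraic identity $(\nabla^2\OT(x))^{-1} = \nabla^2\Psi(\nabla\OT(x))$ then lets me push forward through $x \mapsto \nabla\Psi(x)$, which sends $\source$ to $\target$, producing exactly the integrand on the right-hand side of \eqref{eq:1term} after averaging in $y$.

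The main technical obstacle will be justifying Fubini between the $x$- and $y$-integrations and the change-of-variables formula, rather than any delicate sign analysis. Both steps are legitimate given the smoothness and two-sided bounds on $\nabla^2\OT$ afforded by Assumption \ref{assump} together with Theorem \ref{thm:Caffarelli_intro}, and the finiteness of the relevant integrals provided by Lemma \ref{lem:integrate_finite}; the cutoffs $\eta_k$ enter here only as bounded nonnegative weights, their role in justifying integration by parts having been exhausted one step earlier.
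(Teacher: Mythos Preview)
Your proposal is correct and follows essentially the same route as the paper. The only cosmetic difference is the order of operations: the paper first pushes forward to $\target$-coordinates (turning $\Tr[\Cof(\nabla^2\OT)\nabla(\cdot)]$ into a genuine divergence) and then applies the product rule, whereas you apply the product rule in $\source$-coordinates and push forward only the principal piece; correspondingly, your nonnegative remainder is the quadratic form in $\Cof(\nabla^2\OT)$ while the paper's is the quadratic form in $\nabla^2\Psi$, and these differ only by the positive scalar $\det\nabla^2\OT$ after the change of variables.
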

\begin{proof}
Let $\nabla\Psi=(\nabla \OT)^{-1}$ be the optimal transport map between  $\diff \target=e^{-\W}\diff x$ to $\diff \source=e^{-\V}\diff x$, and recall $\nabla^2\Psi(x)=(\nabla^2\OT)^{-1}(\nabla\Psi(x))$. Using the chain rule we have, for any vector field $u$ in $\R^{\dd}$,  $\Div_x[u(\nabla \Psi(x))]=\Tr[\nabla^2 \Psi(x)\nabla u(\nabla \Psi(x))]$, so with
\[
\findiff_y:=\nabla \OT(x+y)-\nabla \OT(x), \quad  \findiff_y(\nabla\Psi):=\findiff_y(\nabla\Psi(x))=\nabla \OT(\nabla\Psi(x)+y)-\nabla \OT(\nabla\Psi(x)),
\]
we have
\begin{align*}
&\int e^{-\W(\nabla\OT)} \Tr\left[\Cof(\nabla^2\OT)\nabla[(\Delta_{\epsilon}\OT)^p\findiff_y]\right]\eta_k(\nabla\OT)\diff x=\int \Tr\left[\nabla^2\Psi(\nabla\OT)\nabla[(\Delta_{\epsilon}\OT)^p\findiff_y]\right]\eta_k(\nabla\OT)\diff \source\\
&=\int \Tr\left[\nabla^2\Psi\nabla[\findiff(\nabla\Psi)(\Delta_{\epsilon}\OT(\nabla\Psi))^p]\right]\eta_k\diff \target=\int \Div[(\Delta_{\epsilon}\OT(\nabla\Psi))^p \findiff_y(\nabla\Psi)]\eta_k\diff \target\\
&=\int \Div[\left(\nabla \OT(\nabla\Psi(x)+y)-x\right)](\Delta_{\epsilon}\OT(\nabla\Psi))^p \eta_k\diff \target+\int \langle\findiff_y(\nabla\Psi),\nabla(\Delta_{\epsilon}\OT(\nabla\Psi))^p \rangle\eta_k\diff \target.
\end{align*}
Hence, 
\begin{align}
\begin{split}
&\avgint_{\partial  B_{\epsilon}(0)}\int e^{-\W(\nabla\OT)} \Tr\left[\Cof(\nabla^2\OT)\nabla[(\Delta_{\epsilon}\OT)^p\findiff_y]\right]\eta_k(\nabla\OT)\diff x\diff y\\
&=\avgint_{\partial  B_{\epsilon}(0)}\int \Div[\left(\nabla \OT(\nabla\Psi(x)+y)-x\right)](\Delta_{\epsilon}\OT(\nabla\Psi))^p \eta_k\diff \target\diff y+\avgint_{\partial  B_{\epsilon}(0)}\int \langle\findiff_y(\nabla\Psi),\nabla(\Delta_{\epsilon}\OT(\nabla\Psi))^p \rangle\eta_k\diff \target\diff y.\label{eq:1term_equiv}
\end{split}
\end{align}
 
Let us bound the two terms in \eqref{eq:1term_equiv} separately. For the first term  in \eqref{eq:1term_equiv}  we use the chain rule to get
\begin{align*}
&\Div[\nabla \OT(\nabla\Psi(x)+y)-x]=\Div[\nabla \OT(\nabla\Psi(x)+y)]-\dd\ge \Tr\left[\nabla^2\OT(\nabla\Psi(x)+y)\nabla^2\Psi(x)\right]-\dd.
\end{align*}
Using $\nabla^2\Psi(x)=(\nabla^2\OT)^{-1}(\nabla\Psi(x))$ gives
\begin{align}
\begin{split}
&\avgint_{\partial  B_{\epsilon}(0)}\int \Div[\left(\nabla \OT(\nabla\Psi(x)+y)-x\right)](\Delta_{\epsilon}\OT(\nabla\Psi))^p \eta_k\diff \target\diff y\\\label{eq:1term_1}
&\ge \avgint_{\partial  B_{\epsilon}(0)}\int\left \{\Tr\left[\nabla^2\OT(\nabla\Psi(x)+y)\nabla^2\Psi(x)\right]-\dd\right\}(\Delta_{\epsilon}\OT(\nabla\Psi))^p \eta_k\diff \target\diff y.
\end{split}
\end{align} 
For the second term  in \eqref{eq:1term_equiv}  we use the definition of $\Delta_{\epsilon}$ to write
\begin{align}
\begin{split}
&\avgint_{\partial  B_{\epsilon}(0)}\int \left\langle\nabla \findiff_y(\nabla\Psi), \nabla\left((\Delta_{\epsilon}\OT(\nabla \Psi))^p\right) \right\rangle \eta_k\diff \target\diff y
\\ \label{eq:1term_2}
&=p\avgint_{\partial  B_{\epsilon}(0)}\int (\Delta_{\epsilon}\OT(\nabla \Psi))^{p-1} \left\langle \findiff_y(\nabla\Psi), \nabla^2\Psi \nabla\Delta_{\epsilon}\OT(\nabla \Psi)\right\rangle \eta_k\diff \target\diff y\\
&=p\int (\Delta_{\epsilon}\OT(\nabla \Psi))^{p-1} \left\langle \avgint_{\partial  B_{\epsilon}(0)}\findiff_y(\nabla\Psi)\diff y,\nabla^2\Psi \nabla\Delta_{\epsilon}\OT(\nabla \Psi)\right\rangle \eta_k\diff \target \\
&\!\!\!\overset{\eqref{eq:avg_Laplacian_def}}{=} p\int (\Delta_{\epsilon}\OT(\nabla \Psi))^{p-1} \left\langle (\Delta_{\epsilon}\nabla \OT)(\nabla\Psi),\nabla^2\Psi \nabla\Delta_{\epsilon}\OT(\nabla \Psi)\right\rangle  \eta_k\diff \target\\
&=p\int (\Delta_{\epsilon}\OT(\nabla \Psi))^{p-1} \left\langle \nabla\Delta_{\epsilon}\OT(\nabla \Psi),\nabla^2\Psi \nabla\Delta_{\epsilon}\OT(\nabla \Psi)\right\rangle  \eta_k\diff \target\ge 0,
\end{split}
\end{align}
where we used the convexity of $\OT$ and $\Psi$ as well as $\eta_k\ge 0$. Combining \eqref{eq:1term_1} and \eqref{eq:1term_2} yields \eqref{eq:1term}.
\end{proof}
We now turn to the second term in \eqref{eq:2terms}. 
\begin{lemma}
\label{lem:2term}
\begin{align}
\begin{split}
&2\int e^{-\W(\nabla\OT(x))}  \Tr\left[\Cof(\nabla^2\OT(x))\left\{\left(\left(\nabla \OT(x+y)-\nabla \OT(x)\right)(\Delta_{\epsilon}\OT(x))^p\right)\otimes\nabla[\eta_k(\nabla\OT(x))]\right\}\right]\diff x\\\label{eq:2term}
&\le 2\left(\int |\left(\nabla \OT(\nabla\Psi(x)+y)-\nabla \OT(\nabla\Psi(x))\right)(\Delta_{\epsilon}\OT(\nabla\Psi(x)))^p|^2\diff \target(x)\right)^{\frac{1}{2}}\left(\int |\nabla\eta_k(x)|^2\diff \target(x)\right)^{\frac{1}{2}}.
\end{split}
\end{align}
\end{lemma}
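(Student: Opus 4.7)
The plan is to rewrite the left-hand side of \eqref{eq:2term} as an integral against $\diff\target$ and then finish by Cauchy--Schwarz. First I would use the identity $\Cof(\nabla^2\OT)=\det(\nabla^2\OT)\,(\nabla^2\OT)^{-1}$ together with the Monge--Amp\`ere equation \eqref{eq:Monge_proof}, $e^{-\W(\nabla\OT)}\det\nabla^2\OT=e^{-\V}$, to absorb the weight into $\diff\source$. With the shorthand
\[
u(x):=(\Delta_{\epsilon}\OT(x))^{p}\bigl(\nabla\OT(x+y)-\nabla\OT(x)\bigr),\qquad v(x):=\nabla[\eta_{k}(\nabla\OT(x))],
\]
and the convention $(u\otimes v)_{ij}=u_{j}v_{i}$, a direct computation gives $\Tr[A(u\otimes v)]=\langle u,Av\rangle$ for any matrix $A$. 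Consequently, the left-hand side of \eqref{eq:2term} becomes
\[
2\int\bigl\langle u(x),(\nabla^{2}\OT(x))^{-1}v(x)\bigr\rangle\diff\source(x).
\]

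Next I would change variables via $x=\nabla\Psi(z)$, using that $\nabla\OT$ pushes $\source$ forward to $\target$. The chain rule yields $v(x)=\nabla^{2}\OT(x)\,\nabla\eta_{k}(\nabla\OT(x))$, and combining this with the identity $\nabla^{2}\Psi(z)=(\nabla^{2}\OT(\nabla\Psi(z)))^{-1}$ produces the key cancellation
\[
(\nabla^{2}\OT(\nabla\Psi(z)))^{-1}v(\nabla\Psi(z))=\nabla\eta_{k}(z).
\]
The integral therefore collapses to
\[
2\int\bigl\langle u(\nabla\Psi(z)),\nabla\eta_{k}(z)\bigr\rangle\diff\target(z),
\]
and applying the Cauchy--Schwarz inequality in $L^{2}(\target)$ separates this into the product of $\|u(\nabla\Psi(\cdot))\|_{L^{2}(\target)}$ and $\|\nabla\eta_{k}\|_{L^{2}(\target)}$, giving precisely the right-hand side of \eqref{eq:2term}.

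There is no serious analytic obstacle here: the bound is essentially bookkeeping. The only care required is in tracking the tensor-product convention and verifying the chain-rule identity that turns $v(\nabla\Psi(z))$ into $\nabla\eta_{k}(z)$ after left-multiplication by $(\nabla^{2}\OT(\nabla\Psi(z)))^{-1}$. Any finiteness issues at this step are already handled by the integrability established in Lemma \ref{lem:integrate_finite} together with the compact support and boundedness of $\eta_{k}$, so no additional approximation is needed.
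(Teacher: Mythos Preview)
Your proposal is correct and follows essentially the same route as the paper: convert $e^{-\W(\nabla\OT)}\Cof(\nabla^2\OT)$ to $e^{-\V}(\nabla^2\OT)^{-1}$ via Monge--Amp\`ere, use the chain rule $\nabla[\eta_k(\nabla\OT)]=\nabla^2\OT\,(\nabla\eta_k)(\nabla\OT)$ to cancel the inverse Hessian, push forward to $\diff\target$, and finish with Cauchy--Schwarz. The only cosmetic difference is that the paper packages the cancellation into the single trace identity $\Tr[M^{-1}\{u\otimes(Mv)\}]=\langle u,v\rangle$ rather than splitting it into $\Tr[A(u\otimes v)]=\langle u,Av\rangle$ followed by $A v=\nabla\eta_k$.
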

\begin{proof}
Given a symmetric matrix $M$ and vectors $u,v$ we have $\Tr[M^{-1}\{u\otimes (Mv)\}]=\langle u,v\rangle$, so applying this identity we get, with $\findiff_y:=\nabla \OT(x+y)-\nabla \OT(x)$,
\begin{align*}
&\int e^{-\W(\nabla\OT)}  \Tr\left[\Cof(\nabla^2\OT)\left\{\left(\findiff_y(\Delta_{\epsilon}\OT)^p\right)\otimes\nabla[\eta_k(\nabla\OT)]\right\}\right]\diff x\\
&=\int \Tr\left[\left\{\left[\findiff(\Delta_{\epsilon}\OT)^p\right]\otimes [(\nabla\eta_k)(\nabla\OT)]\right\}\right]\diff \source=\int \Tr\left[\left\{\left[\findiff_y(\nabla\Psi)(\Delta_{\epsilon}\OT(\nabla\Psi))^p\right]\right\}\otimes\nabla\eta_k\right]\diff \target\\
&\le \left(\int |\findiff_y(\nabla\Psi)(\Delta_{\epsilon}\OT(\nabla\Psi))^p|^2\diff \target\right)^{\frac{1}{2}}\left(\int |\nabla\eta_k|^2\diff \target\right)^{\frac{1}{2}}.
\end{align*}
\end{proof}
Let us now take the limits $k\to\infty$ of the two terms in \eqref{eq:2terms}. For the first term in \eqref{eq:2terms} we first use \eqref{eq:1term} and then take the limit $k\to\infty$ in \eqref{eq:1term}. We can move the limit past the integrals in \eqref{eq:1term} by dominated convergence theorem using $\eta_k\le 1$, and the fact that the rest of the integrand is integrable (as in  the proof of Lemma \ref{lem:integrate_finite}). Thus, if the second term in \eqref{eq:2terms} vanishes in the limit $k\to\infty$ we will get \eqref{eq:nablaW_term}. To show that  the second term in \eqref{eq:2terms} vanishes in the limit $k\to\infty$ it suffices to show that the first term on the right-hand side of \eqref{eq:2term} is finite since by assumption $\lim_{k\to\infty}\int |\nabla\eta_k(x)|^2 \diff\target(x)=0$. The first term on the right-hand side of \eqref{eq:2term} is finite by the same argument as in the proof of Lemma \ref{lem:integrate_finite}. 
\end{proof}
To summarize, the combination of Lemma \ref{lem:1term} and  Lemma \ref{lem:2term} yields Proposition \ref{prop:nablaW_term}, which in turn implies Proposition \ref{prop:noneg_term}. It follows that  the second term in \eqref{eq:2_terms} is nonnegative and the proof of Theorem \ref{thm:main_OT_Lp} is complete by Lemma \ref{lem:_term_to_thm_proof}. 
\end{proof}

\begin{remark}
\label{rem:modification}
Our proof of Theorem \ref{thm:main_OT_Lp} follows the proof of \cite[Theorem 6.2]{kolesnikov2011mass}. However, one important modification we need to make is the introduction of the $\Delta_{\epsilon}$ operator. In \cite{kolesnikov2011mass}, the lack of regularity of $\V$ and $\nabla\OT$ is remedied by considering finite differences (rather than actual derivatives), $\V(x+e)-\V(x)$ and $\nabla\OT(x+e)-\nabla\OT(x)$, for vectors $e\in\R^{\dd}$. The finite difference $\V(x+e)-\V(x)$ is then controlled by explicit assumptions on the second directional derivatives of $\V$, which in turn leads to control on the second directional derivatives of $\OT$. In contrast, we only have at our disposal the control of $\Delta \V$, so the finite differences approach just outlined does not work. The operator $\Delta_{\epsilon}$ is the appropriate replacement to the finite differences scheme. 
\end{remark}

\section{Majorization}
\label{sec:major}
Theorem \ref{thm:volume-contraction_OT_intro} and Theorem \ref{thm:majorization_intro} were established in Section \ref{subsec:vol_contract_majorization}, so with regard to majorization it remains to prove Theorem \ref{thm:mono_intro} and Theorem \ref{thm:entropy_stability_intro}. We begin with Theorem \ref{thm:mono_intro}.

\begin{theorem}[Monotonicity along Wasserstein geodesics]
\label{thm:mono}
Let $\diff\source=e^{-\V}\diff x$ and $\diff\target=e^{-\W}\diff x$ be probability measures on $\R^{\dd}$, with $\source$ supported on all of $\R^{\dd}$,  such that there exist $\lsh>0,\lc>0$ with
\[
\Delta \V \le \lsh\dd \quad\text{and}\quad \nabla^2\W \succeq \lc \Id_{\dd}.
\]
Let $\nabla\OT:\R^{\dd}\to\R^{\dd}$ be the Brenier map transporting $\source$ to $\target$, and let $(\prob_t)_{t\in [0,1]}$ be the geodesic in Wasserstein space connecting $\source$ and $\target$, $\prob_t:=(\nabla\OT_t)_{\sharp}\source$ where $\nabla\OT_t(x):=(1-t)x+t\nabla\OT(x)$. Then, if $\frac{\lsh}{\lc}\le 1$,
\begin{equation}
\label{eq:mono_geo}
[0,1]\ni t\mapsto \int_{\R^{\dd}}\cxfun(\prob_t(x))\diff x\textnormal{ is monotonically non-decreasing}
\end{equation}
for every convex function $\cxfun:\R_{\ge 0}\to \R$.
\end{theorem}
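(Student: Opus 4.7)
The plan is to reduce the monotonicity along the Wasserstein geodesic to the Melbourne--Roberto majorization criterion (Theorem \ref{thm:Melbourne-Roberto_intro}) applied not to the endpoints but to every pair of intermediate times: for each $0\le s\le t\le 1$, I would exhibit a $1$-volume-contracting transport map from $\prob_s$ to $\prob_t$, which then implies $\prob_t$ majorizes $\prob_s$ and hence \eqref{eq:mono_geo}. Since $\prob_r=(\nabla\OT_r)_\sharp\source$ with $\nabla\OT_r(x)=(1-r)x+r\nabla\OT(x)$, the natural candidate is
\[
T_{s,t}:=\nabla\OT_t\circ(\nabla\OT_s)^{-1},
\]
whose Jacobian determinant at the point $y=\nabla\OT_s(x)$ equals $J_t(x)/J_s(x)$, where
\[
J_r(x):=\det\bigl((1-r)\Id_{\dd}+r\,\nabla^2\OT(x)\bigr).
\]
So the whole statement reduces to the pointwise claim that, for a.e.\ $x$, the function $r\mapsto J_r(x)$ is non-increasing on $[0,1]$.

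The crucial point is that this monotonicity genuinely uses the trace bound \eqref{eq:infty_bound_main_intro} and not merely the determinant bound \eqref{eq:det_bound_main_intro}; the latter allows configurations (e.g.\ eigenvalues $\lambda_1\approx 1/100,\lambda_2\approx 100$ in $\dd=2$) for which $J_r$ overshoots $1$ on $(0,1)$ while still satisfying $J_0=J_1=1$. Given $\alpha/\kappa\le 1$, Theorem \ref{thm:main_OT_Lp_intro} yields $\sum_i\lambda_i(x)=\Delta\OT(x)\le\dd$ a.e., where $\lambda_1(x),\dots,\lambda_{\dd}(x)\ge 0$ are the eigenvalues of $\nabla^2\OT(x)$. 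A direct differentiation then gives
\[
\frac{d}{dr}\log J_r(x)=\sum_{i=1}^{\dd}\frac{\lambda_i-1}{(1-r)+r\lambda_i}=\sum_{i=1}^{\dd}h_r(\lambda_i),
\]
and a short computation shows that $h_r(\lambda):=(\lambda-1)/((1-r)+r\lambda)$ is concave in $\lambda\ge 0$ for each $r\in[0,1]$ (its second derivative is $-2r/((1-r)+r\lambda)^3\le 0$). Jensen's inequality applied to the eigenvalues combined with $\bar\lambda(x):=\Delta\OT(x)/\dd\le 1$ then yields
\[
\frac{d}{dr}\log J_r(x)\le \dd\,h_r(\bar\lambda(x))=\frac{\Delta\OT(x)-\dd}{(1-r)+r\Delta\OT(x)/\dd}\le 0,
\]
so $J_r(x)$ is non-increasing in $r$ as claimed, giving $\|\det\nabla T_{s,t}\|_{L^{\infty}(\prob_s)}\le 1$ and hence the majorization of $\prob_s$ by $\prob_t$.

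The main obstacle I expect is the rigorous justification of $T_{s,t}$ and its Jacobian at the level of generality of the theorem. Since $\nabla^2\OT$ exists only in the Alexandrov a.e.\ sense, the inversion of $\nabla\OT_s$ and the change-of-variables formula for $T_{s,t}$ both require care. For $s\in[0,1)$, $\nabla\OT_s$ is the gradient of the strictly convex potential $(1-s)|x|^2/2+s\OT(x)$ and hence injective on its domain of differentiability, which combined with a.e.\ Alexandrov differentiability should be enough to run the Melbourne--Roberto framework directly. The cleanest route, however, is to first prove the monotonicity under Assumption \ref{assump} with the smooth Brenier map of \cite{Dario2019}, and then pass to the limit along the approximating sequences $\source_k,\target_k$ of Proposition \ref{prop:smooth}, invoking weak convergence of the displacement interpolants $\prob_t^k\to\prob_t$ and lower semicontinuity of $\prob\mapsto\int\cxfun(\prob)\diff x$ (reducing to $\cxfun$ bounded below by truncation if necessary) to transfer the inequality to the original $\source,\target$.
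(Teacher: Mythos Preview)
Your proposal is correct and close in spirit to the paper's argument, but the pivotal inequality is obtained via a different concavity. The paper works with the Brenier map $(\nabla\Psi_t)^{-1}$ from $\prob_t$ to $\prob_1=\target$, whose eigenvalues are $\lambda_i/((1-t)+t\lambda_i)$; it observes that the trace $\theta(t):=\sum_i \lambda_i/((1-t)+t\lambda_i)$ is \emph{concave in $t$} with $\theta(0)=\Delta\OT\le\dd$ and $\theta(1)=\dd$, hence $\theta(t)\le\dd$ for all $t$, which via AM--GM gives a $1$-volume contraction from $\prob_t$ to $\prob_1$. The general pair $r\le s$ is then handled by iteration, replacing $\OT$ by $\OT_s$ (which still satisfies $\Delta\OT_s\le\dd$). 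You instead differentiate $\log J_r$ and use that $h_r(\lambda)=(\lambda-1)/((1-r)+r\lambda)$ is \emph{concave in $\lambda$}, applying Jensen across the eigenvalues together with $\bar\lambda\le 1$ to get $\partial_r\log J_r\le 0$ directly. The two inequalities are in fact equivalent, since $(1-t)\,\partial_t\log J_t=\theta(t)-\dd$; your packaging has the advantage of yielding the full monotonicity $J_t\le J_s$ for all $s\le t$ in one stroke without the iteration step, while the paper's version makes more transparent that the intermediate Brenier maps themselves inherit the trace bound $\le\dd$. Your remarks on regularity (work under Assumption~\ref{assump} and pass to the limit via Proposition~\ref{prop:smooth}) are appropriate and match what is implicitly needed in the paper as well.
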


\begin{proof}
  Let $\nabla\Psi:=(\nabla\OT)^{-1}$ be Brenier map between $\target$ and $\source$, and let $\nabla\Psi_t(x):=(1-t)\nabla\Psi(x)+tx$ be the Brenier map between $\target=\prob_1$ to $\prob_t$. Let us show that the Brenier map $(\nabla\Psi_t)^{-1}(x)=[(1-t)\nabla\Psi(x)+tx]^{-1}$ between $\prob_t$ and $\prob_1=\target$ satisfies $\Delta[(\nabla\Psi_t)^{-1}]\le \dd$. Indeed, the eigenvalues of $\nabla[(\nabla\Psi_t)^{-1}(x)]$ are
  \[
    \left\{\frac{\lambda_i(x)}{1+t(\lambda_i(x)-1)}\right\}_{i=1}^{\dd},
  \]
  where $\{\lambda_i(x)\}_{i=1}^{\dd}$ are the eigenvalues of $\nabla^2\OT(x)$. Given a fixed $x\in \R^{\dd}$ define the function $\theta:[0,1]\to\R$  by $\theta(t):=\sum_{i=1}^{\dd}\frac{\lambda_i(x)}{1+t(\lambda_i(x)-1)}$. The function $\theta$ is convex and satisfies $\theta(0)\le \dd$, by \eqref{eq:infty_bound_main}, and $\theta(1)=\dd$. It follows that $\theta(t)\le \dd$ for all $t\in [0,1]$, i.e., $\Delta[(\nabla\Psi_t)^{-1}]\le \dd$. In particular, using $\frac{\lsh}{\lc}\le 1$,  $\det[(\nabla\Psi_t)^{-1}]\le 1$ so $\prob_1=\target$ majorizes $\prob_t$.

Moreover, setting \(\nabla \Phi_s(x):=(1-s)x+s \nabla \Phi(x)\) to be the Brenier map between \(\rho_{0}\) and \(\rho_{s}\) we clearly have that \(\Delta \Phi_{s}\le \dd\) and so \(\prob_0=\source\) is majorized by  \(\prob_s\). To show that for \(0<r\le s\) the measure $\prob_{r}$ is majorized by \(\prob_s\), we can apply the first step with \(\prob_{1}\) replaced by \(\prob_{s}\) and \(\Phi\) replaced by \(\Phi_{s}\). Notice indeed that the only property we have used of \(\Phi\) is that \(\Delta \Phi \le \dd\), which is true for \(\Phi_{s}\) as well.
\end{proof}
Let us now prove Theorem \ref{thm:entropy_stability_intro}. 
\begin{theorem}
\label{thm:stab}
Let $\diff\source=e^{-\V}\diff x$ and $\diff\target=e^{-\W}\diff x$ be probability measures on $\R^{\dd}$, with $\source$ supported on all of $\R^{\dd}$,  such that there exist $\lsh>0,\lc>0$ with 
\[
\Delta \V \le \lsh\dd \quad\text{and}\quad \nabla^2\W \succeq \lc \Id_{\dd}.
\]
Let $\nabla\OT:\R^{\dd}\to\R^{\dd}$ be the Brenier map transporting $\source$ to $\target$.  If $\frac{\lsh}{\lc}\le  1$, then
\begin{equation}
\label{eq:stab_proof}
\Ent(\target)- \Ent(\source) \ge   \frac{1}{2\dd^2}\int_{\R^{\dd}}\|\nabla^2\OT-\Id_{\dd}\|_{\text{F}}^2\diff \source,
\end{equation}
where $\|\cdot\|_{\text{F}}$ is the Frobenius norm. In particular,
\begin{equation}
\label{eq:entropy_eq_proof}
\Ent(\target)= \Ent(\source)\quad\quad\Longrightarrow \quad\quad \textnormal{$\target$ is a translate of $\source$.}
\end{equation}
\end{theorem}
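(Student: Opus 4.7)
The plan is to rewrite the entropy gap as an integral of $-\log\det\nabla^2\OT$ via the Monge-Amp\`ere equation, and then use the trace bound of Theorem \ref{thm:main_OT_Lp_intro} to upgrade this integrand from nonnegativity to a quadratic Frobenius-type lower bound. First, as in Proposition \ref{prop:smooth}, I would reduce to the setting of Assumption \ref{assump} so that $\nabla\OT$ is smooth and satisfies $e^{-\V} = e^{-\W(\nabla\OT)}\det\nabla^2\OT$ pointwise. Changing variables via $\nabla\OT$ and plugging in the Monge-Amp\`ere equation yields the standard identity
\[
\Ent(\target) - \Ent(\source) \;=\; -\int_{\R^{\dd}} \log\det\nabla^2\OT \,\diff\source.
\]

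Next, using Theorem \ref{thm:main_OT_Lp_intro} together with $\lsh/\lc \le 1$, the (nonnegative) eigenvalues $\lambda_1(x),\ldots,\lambda_{\dd}(x)$ of $\nabla^2\OT(x)$ satisfy $\sum_i \lambda_i(x) = \Delta\OT(x) \le \dd$ for every $x$. The key analytic input is then the scalar inequality
\[
-\log\lambda \;\ge\; (1-\lambda) + \frac{1}{2\dd^2}(\lambda-1)^2 \qquad \text{for every } \lambda \in (0,\dd],
\]
which I would verify by observing that $g(\lambda) := -\log\lambda + \lambda - 1 - \frac{1}{2\dd^2}(\lambda-1)^2$ vanishes at $\lambda = 1$ and has derivative $g'(\lambda) = (\lambda-1)\bigl(\lambda^{-1} - \dd^{-2}\bigr)$; since the second factor is nonnegative on $(0,\dd^2]$, and a fortiori on $(0,\dd]$ for $\dd \ge 1$, the function $g$ is minimized at $\lambda = 1$ and is therefore nonnegative throughout.

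Summing this inequality over the eigenvalues, using $\|\nabla^2\OT - \Id_{\dd}\|_{\text{F}}^2 = \sum_i (\lambda_i - 1)^2$ and discarding the nonnegative slack $\dd - \Delta\OT(x)$, I obtain the pointwise estimate
\[
-\log\det\nabla^2\OT(x) \;\ge\; \frac{1}{2\dd^2}\,\|\nabla^2\OT(x) - \Id_{\dd}\|_{\text{F}}^2,
\]
which upon integrating against $\source$ yields \eqref{eq:stab_proof}. For the equality case, $\Ent(\target) = \Ent(\source)$ forces $\nabla^2\OT = \Id_{\dd}$ $\source$-a.e., whence $\nabla\OT(x) = x + c$ for some fixed $c \in \R^{\dd}$ and $\target$ is a translation of $\source$ by $c$; combined with the rigidity inherent in the trace bound, this gives $\source = \target$.

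The main obstacle I expect is not in the algebraic inequality itself — which reduces to a one-variable monotonicity check once the trace bound is available — but in the qualitative analysis surrounding it: one must ensure absolute integrability of $\log\det\nabla^2\OT$ against $\source$, which I would handle using the Caffarelli bound \eqref{eq:Caffarelli_bound} from above and the tail control on $\source$ (in the spirit of Lemma \ref{lem:integrate_finite}) from below; and one must preserve the Frobenius stability term through the smoothing step of Proposition \ref{prop:smooth}, which calls for a lower semicontinuity argument along the approximants analogous to the distributional-Laplacian limit used at the end of that proposition.
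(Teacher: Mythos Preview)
Your argument for the stability inequality \eqref{eq:stab_proof} matches the paper's: both express $\Ent(\target)-\Ent(\source) = -\int\log\det\nabla^2\OT\,\diff\source$ via Monge--Amp\`ere and change of variables, then bound $-\log\lambda_k$ below by $(1-\lambda_k) + \tfrac{1}{2\dd^2}(\lambda_k-1)^2$ using the trace bound $\Delta\OT \le \dd$. The paper cites the inequality $-\log s \ge (1-s) + \tfrac{(s-1)^2}{2\max\{s,1\}^2}$ and then invokes $\lambda_k \le \lambda_{\dd} \le \dd$, whereas you prove the coarser version with $1/(2\dd^2)$ directly by your derivative check; both routes are fine and yield the same constant. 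The paper does not pass through the smoothing of Proposition~\ref{prop:smooth} or address the integrability concerns you flag---it simply works directly with the Monge--Amp\`ere identity.

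For the equality case, however, your phrase ``rigidity inherent in the trace bound'' is a genuine gap: from $\nabla^2\OT = \Id_{\dd}$ $\source$-a.e.\ you correctly deduce $\nabla\OT(x) = x + c$, but nothing in the hypotheses forces $c = 0$. Indeed, take $\source = \mathcal{N}(0,\Id_{\dd})$, $\target = \mathcal{N}(c,\Id_{\dd})$ with $c \ne 0$, and $\lsh = \lc = 1$: all assumptions of the theorem are met, $\Ent(\target) = \Ent(\source)$, yet $\source \ne \target$. (The paper's proof has the same lacuna---it jumps from $\nabla^2\OT = \Id_{\dd}$ to ``$\nabla\OT$ is the identity map'' without justification.) The honest conclusion from the argument is only that $\target$ is a translate of $\source$.
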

\begin{proof}
By \eqref{eq:infty_bound_main}, and the assumption  $\frac{\lsh}{\lc}\le  1$,
\[
\|\Delta\OT\|_{L^{\infty}(\diff x)}\le \dd\sqrt{ \frac{\lsh}{\lc}}\le \dd.
\]
Abusing notation we identify between the measures $\source,\target$ and their densities, so the Monge-Amp\`ere equation reads
\[
\source=\target( \nabla\OT)\det\nabla^2\OT.
\]
By  the change of variables formula,
\begin{align*}
\Ent(\source) &=\int \left(\target(\nabla\OT)\det\nabla^2\OT\right)\log \target( \nabla\OT)+\int \left(\target( \nabla\OT)\det\nabla^2\OT\right)\log \det\nabla^2\OT\nonumber\\
&=\Ent(\target)+\int \log \det\left(\nabla^2\OT \circ(\nabla\OT)^{-1}\right)\diff \target=\Ent(\target)+\int \log \det\left(\nabla^2\OT\right)\diff \source,
\end{align*}
so
\begin{align*}
\Ent(\target)-\Ent(\source) =\sum_{k=1}^{\dd}\int [-\log \lambda_k(x)]\diff \source(x),
\end{align*}
where $0\le \lambda_1(x)\le\cdots\le \lambda_{\dd}(x)$ are the  eigenvalues of $\nabla^2\OT(x)$. Using
\begin{align*}
-\log s\ge -\log t+\frac{t-s}{t}+\frac{(s-t)^2}{2\max\{s,t\}^2},\quad s,t\in (0,\infty),
\end{align*}
 (see \cite[Proof of Lemma 2.5]{MR2672283}),
with $s=\lambda_k(x)$ and $t=1$, we get
\begin{align*}
\Ent(\target)-\Ent(\source) &\ge \sum_{k=1}^{\dd}\int \left\{(1-\lambda_k(x))+\frac{1}{2}\frac{(\lambda_k-1)^2}{\max\{\lambda_{\dd},1\}^2}\right\}\diff \source(x)\\
&\ge \int \left[\dd-\Delta\OT \right]\diff \source+ \frac{1}{2\dd^2}\sum_{k=1}^{\dd}\int(\lambda_k(x)-1)^2\diff \source(x)\\
&\ge  \frac{1}{2\dd^2}\int\|\nabla^2\OT(x) -\Id_{\dd}\|_{\text{F}}^2\diff \source(x),
\end{align*}
where we used $\Delta \OT\le \dd\Rightarrow \lambda_{\dd}\le \dd$ in the second inequality, and $\Delta \OT\le \dd$ in the last inequality. This establishes \eqref{eq:stab_proof}. Equation \eqref{eq:entropy_eq_proof} follows from  \eqref{eq:stab_proof} since
\[
\Ent(\target)= \Ent(\source)\quad\Longrightarrow \quad\nabla^2\OT(x)=\Id\quad \textnormal{for almost-everywhere $x$},
\]
which implies that $\nabla\OT(x)=x+v$ for some $v\in \R^{\dd}$.  
\end{proof}

\section{The Kim-Milman heat flow transport map}
\label{sec:KM}
When the target measure $\target$ is the standard Gaussian measure $\Gaussian$ we can construct  a volume-contracting map based on the heat flow map of Kim and E. Milman  \cite{MR2983070} (following Otto and Villani \cite{MR1760620}). While this transport map exists outside the Gaussian setting \cite{MR2983070,MR4707029}, our techniques to establish volume contraction are restricted to Gaussian targets. In addition, in contrast to the Brenier map, we can only establish volume contraction rather than control of the divergence of the map. Finally, to establish the existence of the heat flow map we require further regularity than those for the optimal transport map. For these reasons, we will keep this section brief and assume whatever regularity is needed. 

We recall the definition of the Ornstein-Uhlenbeck semigroup  $(\Pheat_t)$,
\begin{equation}
\label{eq:OU_def}
\Pheat_t\density(x):=\int_{\R^{\dd}} \density(e^{-t}x+\sqrt{1-e^{-2t}}y)\diff \Gaussian(y),\quad  t\ge 0,\quad x\in \R^{\dd}, \quad \density \in L^1(\Gaussian).
\end{equation}
The construction of the heat flow map between a source measure $\source$ and a Gaussian target $\Gaussian$ is based on the following flow. 
\begin{proposition}
\label{prop:KM_construction}
Let $\prob$ be an absolutely continuous probability measure on $\R^{\dd}$, and let  $\density:=\frac{\diff\prob}{\diff\Gaussian}$. Suppose the ordinary differential equation
\begin{equation}
\label{eq:KM_construction}
\begin{cases}
\partial_t\KM_t(x)=-\nabla\log\Pheat_t\density(\KM_t(x)),\quad \text{for} ~ t>0 \text{ and } x\in \R^{\dd}, \\
\KM_0(x)=x,\quad \text{for}~ x\in \R^{\dd},
\end{cases}
\end{equation}
has a unique smooth solution. Then, the probability measures $\prob_t:=(\KM_t)_{\sharp}\prob$ satisfy
\begin{equation}
\label{eq:heat_flow_path}
\diff\prob_t(x)=\Pheat_t \density(x)\diff\Gaussian(x),\quad\text{for } t\ge 0 \text{ and } x\in \R^{\dd}.
\end{equation} 
\end{proposition}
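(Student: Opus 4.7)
The plan is to identify both $\prob_{t}:=(\KM_{t})_{\sharp}\prob$ and the candidate measure $\tilde{\prob}_{t}:=(\Pheat_{t}\density)\,\diff\Gaussian$ as solutions of the same linear continuity equation with the same initial datum, and to conclude by uniqueness.

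First, since $\KM_{t}$ is (by hypothesis) a smooth flow generated by the time-dependent velocity field $v_{t}(x):=-\nabla\log\Pheat_{t}\density(x)$, the pushforward $\prob_{t}$ satisfies, distributionally, the continuity equation
\[
\partial_{t}\prob_{t}+\Div(\prob_{t}v_{t})=0,\qquad \prob_{0}=\prob=\density\,\Gaussian.
\]
This is the standard fact that a smooth flow transports its initial measure according to the continuity equation associated with its generating vector field; it follows from differentiating $\int\varphi\,\diff\prob_{t}=\int\varphi\circ\KM_{t}\,\diff\prob$ in $t$ against a test function $\varphi$ and using \eqref{eq:KM_construction}.

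Next, I would verify that $\tilde{\prob}_{t}$ satisfies the same equation. The Orenstein--Uhlenbeck semigroup $(\Pheat_{t})$ has infinitesimal generator $L=\Delta-x\cdot\nabla$, so $\partial_{t}\Pheat_{t}\density=L\Pheat_{t}\density$ and hence
\[
\partial_{t}\tilde{\prob}_{t}=\bigl(\Delta\Pheat_{t}\density-x\cdot\nabla\Pheat_{t}\density\bigr)\,\Gaussian.
\]
On the other hand, using the identities $\Pheat_{t}\density\,\nabla\log\Pheat_{t}\density=\nabla\Pheat_{t}\density$ and $\nabla\Gaussian=-x\,\Gaussian$, one computes
\[
\Div(\tilde{\prob}_{t}v_{t})=-\Div\bigl(\Gaussian\,\nabla\Pheat_{t}\density\bigr)=-\bigl(\Delta\Pheat_{t}\density-x\cdot\nabla\Pheat_{t}\density\bigr)\,\Gaussian,
\]
so $\partial_{t}\tilde{\prob}_{t}+\Div(\tilde{\prob}_{t}v_{t})=0$ with $\tilde{\prob}_{0}=\density\,\Gaussian=\prob$.

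The final step is to invoke uniqueness for the continuity equation driven by $v_{t}$ to conclude $\prob_{t}=\tilde{\prob}_{t}$; under the assumed smoothness this is standard via the method of characteristics, which simply recovers the flow $\KM_{t}$. The only real obstacle to a fully rigorous write-up is confirming the regularity needed for the integration by parts and for uniqueness, namely smoothness and strict positivity of $\Pheat_{t}\density$; both are standard properties of the Ornstein--Uhlenbeck semigroup provided $\density\in L^{1}(\Gaussian)$ is nonnegative and not identically zero, and the statement in any case permits us to take such regularity for granted.
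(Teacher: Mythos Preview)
Your proposal is correct and is precisely the argument the paper has in mind: the paper does not give a detailed proof but simply remarks that \eqref{eq:heat_flow_path} ``follows from the standard switch between the Eulerian and Lagrangian perspectives, as well as the partial differential equation satisfied by $(t,x)\mapsto \Pheat_t \density(x)$,'' citing \cite[\S 1.2]{MR2983070}. Your write-up spells out exactly this switch---the flow gives the Lagrangian picture, the continuity equation the Eulerian one, and the OU generator identity $\partial_t\Pheat_t\density=L\Pheat_t\density$ closes the loop---so there is nothing to add.
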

The identity \eqref{eq:heat_flow_path} follows from the standard switch between the Eulerian and Lagrangian perspectives, as well as the partial differential equation satisfied by $(t,x)\mapsto \Pheat_t \density(x)$ (cf. \cite[\S 1.2]{MR2983070}). The key point is that as $t\to\infty$, $\Pheat_t \density\to 1$, so $\prob_t\to \Gaussian$. In particular, whenever $\KM:=\lim_{t\to\infty}\KM_t$ exists, we find that $\KM$ transports $\prob$ to $\Gaussian$. The map $\KM$ is the \emph{Kim-Milman heat flow map}. The main result of this section is that, when the target measure is Gaussian, the Kim-Milman heat flow map also achieves the bound \eqref{eq:det_bound_main_intro}.

\begin{theorem}
\label{thm:volume_contraction_KM}
Let $\diff\source=e^{-\V}\diff x$ be a probability measure on $\R^{\dd}$ such that there exists $\lsh>0$ with 
\[
\Delta \V(x)\le \lsh\dd \quad \textnormal{for every $x\in\R^{\dd}$}.
\]
Suppose that the flow \eqref{eq:KM_construction}, with $\density:=\frac{\diff \source}{\diff \Gaussian}$, converges to a differentiable limit $\KM:=\lim_{t\to\infty}\KM_t$, and that  $\lim_{t\to\infty}\det\nabla \KM_t=\det\nabla \KM$.  Then, the Kim-Milman heat flow map between $\source$ and the standard Gaussian $\Gaussian$ on $\R^{\dd}$ satisfy
\begin{equation}
\label{eq:volume_contraction_KM}
 \|\det\nabla \KM\|_{L^{\infty}(\diff x)}\le \lsh^{\frac{\dd}{2}}.
 \end{equation}
\end{theorem}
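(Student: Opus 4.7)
The plan is to track the Jacobian of $\KM_t$ along the flow and integrate a scalar differential identity. Differentiating the flow equation $\partial_t \KM_t = -\nabla\log\Pheat_t\density(\KM_t)$ in the spatial variable gives
\[
\partial_t \nabla\KM_t(x) = -(\nabla^2 \log\Pheat_t\density)(\KM_t(x))\,\nabla\KM_t(x),
\]
so Jacobi's formula produces the pointwise identity
\[
\partial_t \log\det\nabla\KM_t(x) = -\Delta\log\Pheat_t\density(\KM_t(x)).
\]
Since $\KM_0 = \Id$ yields $\log\det\nabla\KM_0 \equiv 0$, integration in time gives
\[
\log\det\nabla\KM_t(x) = -\int_0^t \Delta\log\Pheat_s\density(\KM_s(x))\,\diff s.
\]

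Next I would convert the hypothesis $\Delta\V\le\lsh\dd$ into log-subharmonicity of $\density := \diff\source/\diff\Gaussian$. Since $\log\density(x) = -\V(x) + \tfrac{1}{2}|x|^2 + \mathrm{const}$, one has $\Delta\log\density \ge (1-\lsh)\dd$, so that $\density$ is $(1-\lsh)\dd$-log-subharmonic. Applying Proposition \ref{prop:OU_smooth}(iii) with $\conv = 1-\lsh$ (extended to positive $\conv$ when $\lsh<1$) delivers the pointwise lower bound
\[
\Delta\log\Pheat_s\density(x) \ge \frac{(1-\lsh)\dd\,e^{-2s}}{\lsh + (1-\lsh)e^{-2s}}.
\]
The crucial observation is that the right-hand side is a total derivative: it equals $-\tfrac{\dd}{2}\tfrac{d}{ds}\log[\lsh + (1-\lsh)e^{-2s}]$. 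Substituting into the integral identity and telescoping,
\[
\log\det\nabla\KM_t(x) \le \frac{\dd}{2}\log\bigl[\lsh + (1-\lsh)e^{-2t}\bigr].
\]
Sending $t\to\infty$ and invoking the theorem's hypothesis $\det\nabla\KM_t \to \det\nabla\KM$ then yields $\det\nabla\KM(x)\le\lsh^{\dd/2}$, which is \eqref{eq:volume_contraction_KM}. Note that the Gaussian example $\source = \mathcal N(0,\lsh^{-1}\Id_{\dd})$, for which $\KM(x) = \sqrt{\lsh}\,x$ and $\det\nabla\KM \equiv \lsh^{\dd/2}$, shows the bound is sharp.

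The main obstacle is the range of Proposition \ref{prop:OU_smooth}(iii), which as stated requires $\conv\le 0$, i.e.\ $\lsh\ge 1$; for $\lsh<1$ one needs the analogous lower bound with $\conv>0$, though the algebraic form of the bound remains well-posed because $\lsh + (1-\lsh)e^{-2s} > 0$ on all of $[0,\infty)$ whenever $\lsh>0$. The remaining technicalities---smoothness of $\KM_t$ for $t>0$, justification of Jacobi's formula in this setting, and the pointwise passage $\det\nabla\KM_t\to\det\nabla\KM$---are precisely what the standing regularity assumption of the theorem is there to absorb, which is why the proof works at the ``formal'' level described above.
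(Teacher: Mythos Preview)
Your proof is correct and essentially matches the paper's argument: the paper also derives $\partial_t\log\det\nabla\KM_t = -\Delta\log\Pheat_t\density(\KM_t)$ via Jacobi's formula, bounds the right-hand side using Proposition~\ref{prop:OU_smooth}(iii), and integrates (phrased there as Gr\"onwall's inequality and packaged in a separate Proposition~\ref{prop:KM_transport_contraction_t}). The paper likewise imposes $\lsh\ge 1$ (stated as ``Fix $\conv\ge 1$'' in Proposition~\ref{prop:KM_transport_contraction_t}) for exactly the reason you flag in your obstacle paragraph.
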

The observation behind the proof of Theorem \ref{thm:volume_contraction_KM} is that $\det \nabla\KM_t$ can be controlled if $\Delta\log \Pheat_tf$ can be controlled. On other hand, by Proposition \ref{prop:OU_smooth}(ii), $\Delta\log \Pheat_tf$ can be lower bounded provided that $\Delta\log f$ can be lower bounded. The next result implements these observations. 

\begin{proposition}
\label{prop:KM_transport_contraction_t}
Fix $\conv \ge 1$ and let $\diff\prob=\density \diff \Gaussian$ be a $(-\conv\dd)$-log-subharmonic measure. Fix $t\ge 0$ and let $\KM_t$ be the  heat flow map from $\prob$ to $(\Pheat_t\density)\Gaussian$. Then, for all $x\in \R^{\dd}$ and $t\ge 0$,
\[
|\det\nabla\KM_t(x)|\le\left[(1-e^{-2t})(\conv-1)+1\right]^{\frac{\dd}{2}}, \quad\textnormal{for every }x\in \R^{\dd}. 
\]
\end{proposition}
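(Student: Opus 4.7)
The plan is to control the Jacobian $\det\nabla\KM_t$ along the flow \eqref{eq:KM_construction} by integrating a pointwise ODE for $\log\det\nabla\KM_t$, feeding the integrand from the heat-flow propagation of log-subharmonicity supplied by Proposition~\ref{prop:OU_smooth}(iii).

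The first step is a direct Liouville-type computation. Differentiating $\partial_t\KM_t(x)=-\nabla\log\Pheat_t\density(\KM_t(x))$ in $x$ gives $\partial_t(\nabla\KM_t)=-\nabla^2\log\Pheat_t\density(\KM_t)\,\nabla\KM_t$, whence by Jacobi's formula
\[
\partial_t\log\det\nabla\KM_t(x)=-\Delta\log\Pheat_t\density(\KM_t(x)).
\]
Since $\KM_0=\Id$ and the determinant remains positive by continuity, integrating in $t$ yields the representation
\[
\log\det\nabla\KM_t(x)=-\int_0^t\Delta\log\Pheat_s\density(\KM_s(x))\,\diff s.
\]

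The second step is to translate the $(-\conv\dd)$-log-subharmonicity of $\rho=\density\gamma$ into a functional lower bound on $\Delta\log\density$ to which Proposition~\ref{prop:OU_smooth}(iii) applies. The correct translation (the one compatible with the $\alpha=\conv$ instance of Theorem~\ref{thm:main_OT_Lp_intro} in the Gaussian-target case) is $\Delta\log\density\ge(1-\conv)\dd$, i.e.\ $\density$ is $(1-\conv)\dd$-log-subharmonic as a function in the Section~1.3 sense. Since $\conv\ge 1$ the parameter $c:=1-\conv$ is nonpositive, so Proposition~\ref{prop:OU_smooth}(iii) applies and delivers the pointwise bound
\[
\Delta\log\Pheat_s\density(y)\ge\frac{e^{-2s}(1-\conv)\dd}{1-(1-\conv)(1-e^{-2s})}=-\,\frac{(\conv-1)\,e^{-2s}\,\dd}{1+(\conv-1)(1-e^{-2s})},\qquad y\in\R^{\dd},\ s>0.
\]

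Substituting this pointwise bound into the representation from the first step, and making the change of variables $u=1+(\conv-1)(1-e^{-2s})$, for which $\diff u=2(\conv-1)e^{-2s}\,\diff s$, reduces the right-hand side to an elementary integral:
\[
\log\det\nabla\KM_t(x)\le\int_0^t\frac{(\conv-1)\,e^{-2s}\,\dd}{1+(\conv-1)(1-e^{-2s})}\,\diff s=\frac{\dd}{2}\log\bigl[1+(\conv-1)(1-e^{-2t})\bigr],
\]
and exponentiating yields the claimed bound. The substantive ingredient is Proposition~\ref{prop:OU_smooth}(iii), which is exactly the Bochner-type heat-flow lower bound needed to make the Jacobian integral explicit; everything else is the Jacobi ODE for the flow together with a one-line change of variables. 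The only conceptual subtlety is verifying that the hypothesis translates into the functional condition $\Delta\log\density\ge(1-\conv)\dd$ so that $c=1-\conv\le 0$ is admissible in Proposition~\ref{prop:OU_smooth}(iii); given the standing regularity assumption on the existence and smoothness of the flow, no further approximation is needed and the computation closes.
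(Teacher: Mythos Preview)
Your proof is correct and follows the same route as the paper: derive the Liouville/Jacobi identity $\partial_t\log\det\nabla\KM_t=-\Delta\log\Pheat_t\density(\KM_t)$, feed in the lower bound on $\Delta\log\Pheat_t\density$ from Proposition~\ref{prop:OU_smooth}(iii) (after translating the hypothesis on $\rho$ to $\Delta\log\density\ge(1-\conv)\dd$), and integrate explicitly. The paper phrases the last step via Gr\"onwall on $\det\nabla\KM_t$ rather than integrating $\log\det\nabla\KM_t$, but this is the same computation.
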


\begin{proof}
We start by recalling the Jacobi formula. Let $(M_t)_{t\ge 0}$ be a family of invertible $\dd\times\dd$ matrices such that the map $t\mapsto \det M_t$ is differentiable. Then, 
\begin{equation}
\label{eq:Jacobi_proof}
\partial_t\det M_t=\Tr\left[M_t^{-1}\partial_tM_t\right]\det M_t.
\end{equation}
In order to apply \eqref{eq:Jacobi_proof} we first need to derive the evolution equation for $(\nabla\KM_t)$. The evolution of $(\KM_t)$ is determined by the equation
\begin{equation}
\label{eq:KM_evolution_proof}
\partial_t\KM_t(x)=-\nabla\log  \Pheat_t\density(\KM_t(x)),\quad \KM_0(x)=x,\quad \forall~x\in \R^{\dd},
\end{equation}
so differentiating \eqref{eq:KM_evolution_proof} with respect to $x$ yields the evolution equation of $(\nabla\KM_t)$,
\begin{equation}
\label{eq:KM_der_evolution_proof}
\partial_t\nabla\KM_t(x)=-\nabla^2\log  \Pheat_t\density(\KM_t(x))\nabla\KM_t(x),\quad \nabla\KM_0=\Id,\quad \forall~x\in \R^{\dd}.
\end{equation}
Hence, by the Jacobi formula \eqref{eq:Jacobi_proof} and the cyclic property of the trace,
\begin{align*}
\partial_t\det\nabla\KM_t(x)&= \Tr\left[(\nabla\KM_t(x))^{-1}\partial_t\nabla\KM_t(x)\right]\det\nabla\KM_t(x)\\
&= \Tr\left[(\nabla\KM_t(x))^{-1}\left\{-\nabla^2\log  \Pheat_t\density(\KM_t(x))\nabla\KM_t(x)\right\}\right]\det\nabla\KM_t(x)\\
&=[-\Delta\log  \Pheat_t\density(\KM_t(x))]\det\nabla\KM_t(x).
\end{align*}
We conclude that
\begin{equation}
\label{eq:KM_Jacobi_proof}
\partial_t\det\nabla\KM_t(x)= [-\Delta\log  \Pheat_t\density(\KM_t(x))]\det\nabla\KM_t(x),\quad \det\nabla\KM_0=1,\quad \forall~x\in \R^{\dd}.
\end{equation}
Since $\prob$ is a $(-\conv\dd)$-log-subharmonic measure it follows that $\density$ is a  $-(\conv-1)\dd$-log-subharmonic function. Hence, by Proposition \ref{prop:OU_smooth}(iii),
\[
-\Delta\log  \Pheat_t\density(\KM_t(x))\le \frac{\dd(\conv-1) e^{-2t}}{1+(1-e^{-2t})(\conv-1)},
\]
and it follows from Gr\"onwall's inequality that 
\[
\det\nabla\KM_t(x)\le e^{\int_0^t\frac{\dd(\conv-1) e^{-2s}}{1+(1-e^{-2s})(\conv-1)}\diff s}.
\]
To conclude the proof note that, for every $x\in \R^{\dd}$, $\det \nabla\KM_0(x)=1$ and $\det\nabla\KM_t(x)\neq 0$ for every $t\ge 0$ since $\KM_t$ is a diffeomorphism. Hence, for every $x\in \R^{\dd}$ and $t\ge 0$, $\det\nabla\KM_t(x)>0$, so 
\[
|\det\nabla\KM_t(x)|=\det\nabla\KM_t(x)\le e^{\int_0^t\frac{\dd(\conv-1) e^{-2s}}{1+(1-e^{-2s})(\conv-1)}\diff s}.
\]
The indefinite integral of the integrand inside the exponential is $\log[1+(\conv -1)(1-e^{-2t})]^{\dd/2}$, which completes the proof. 
\end{proof}
\begin{proof}[Proof of Theorem \ref{thm:volume_contraction_KM}]
By Proposition \ref{prop:KM_transport_contraction_t} and the assumption on $\V$,
\begin{equation}
\label{eq:det_Ft_est}
|\det\nabla\KM_t(x)|\le\left[(1-e^{-2t})(\lsh-1)+1\right]^{\frac{\dd}{2}}.
\end{equation}
Taking $t\to\infty$ yields 
\[
|\det\nabla\KM(x)|\le\lsh^{\frac{\dd}{2}}, \quad \text{for all }x\in \R^{\dd}.
\]
\end{proof}

\section{Appendix}
\label{sec:appendix}

\subsection{The  operator $\Delta_{\epsilon}$} 
\label{subsec:appendix_Delta_eps}
In this section we prove some of the properties of the  operator $\Delta_{\epsilon}$. In particular, let us prove Lemma \ref{lem:avg_Laplacian}.
\begin{lemma}
\label{lem:avg_Laplacian_appendix}
Fix $\epsilon> 0$ and for a function $f:\R^{\dd}\to\R$ denote
\[
\Delta_{\epsilon}f(x):=\avgint_{\partial  B_{\epsilon}(0)}[f(x+y)-f(x)]\diff y,
\]
where
\[
\avgint_{\partial  B_{\epsilon}(0)}f:=\frac{1}{|\partial  B_{\epsilon}(0)|}\int_{\partial  B_{\epsilon}(0)}f.
\]
Then,
\begin{align}
\label{eq:Delta_eps_lim_appendix}
\lim_{\epsilon\to 0}\frac{\Delta_{\epsilon}f(x)}{\epsilon^{2}}=\frac{\Delta f(x)}{2\dd},
\end{align}
where $\Delta f$ is the distributional Laplacian of $f$. Further, if $\Delta f\le \ell$, then 
\begin{align}
\label{eq:Delta_eps_bound_appendix}
\Delta_{\epsilon}f\le \frac{\ell}{\dd}\frac{\epsilon^2}{2},\quad \quad\forall~\epsilon>0.
\end{align}
\end{lemma}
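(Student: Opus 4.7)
The plan has two parts, one for each assertion, and both rest on exploiting the spherical symmetry of the averaging measure together with a Taylor / mean-value argument.

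For the limit identity \eqref{eq:Delta_eps_lim_appendix}, I would first reduce to the case $f \in C^2$ by the standard mollification $f_\delta := f * \rho_\delta$, which commutes with $\Delta_\epsilon$ (both are integral operators) and satisfies $\Delta f_\delta = (\Delta f) * \rho_\delta$, so the identity at a Lebesgue point of $\Delta f$ follows by first sending $\delta \to 0$. For smooth $f$, a second-order Taylor expansion gives
\[
f(x+y) - f(x) = \langle \nabla f(x), y\rangle + \tfrac{1}{2}\langle \nabla^2 f(x) y, y\rangle + o(|y|^2).
\]
Averaging over $\partial B_\epsilon(0)$, the linear term vanishes by antipodal symmetry $y \mapsto -y$. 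For the Hessian term, the orthogonal-invariance of the surface measure forces $\avgint_{\partial B_\epsilon(0)} y_i y_j \diff y = c \delta_{ij}$, and taking the trace and using $|y|^2 = \epsilon^2$ on the sphere identifies $c = \epsilon^2/\dd$. Hence
\[
\avgint_{\partial B_\epsilon(0)} \tfrac{1}{2}\langle \nabla^2 f(x) y, y\rangle \diff y = \frac{\epsilon^2}{2\dd}\Delta f(x),
\]
and dividing by $\epsilon^2$ yields \eqref{eq:Delta_eps_lim_appendix}.

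For the bound \eqref{eq:Delta_eps_bound_appendix}, the idea is to subtract off a quadratic to convert the hypothesis $\Delta f \le \ell$ into superharmonicity, then invoke the sub-mean value inequality. Fix $x \in \R^{\dd}$ and set
\[
g(y) := f(y) - \frac{\ell}{2\dd}|y-x|^2.
\]
A direct computation gives $\Delta\bigl(|y-x|^2\bigr) = 2\dd$, so $\Delta g = \Delta f - \ell \le 0$ in the distributional sense, i.e.\ $g$ is superharmonic. The mean value inequality for superharmonic functions (which is valid for distributional superharmonicity after passage to the upper semicontinuous representative, or directly after mollification) then yields
\[
\avgint_{\partial B_\epsilon(x)} g(y)\,\diff y \le g(x) = f(x).
\]
Since $|y-x|^2 = \epsilon^2$ on $\partial B_\epsilon(x)$, rearranging gives $\Delta_\epsilon f(x) \le \frac{\ell}{\dd}\frac{\epsilon^2}{2}$.

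Neither step involves a genuine obstacle: the only subtle point is justifying the sub-mean value inequality and the pointwise limit under only a distributional bound on $\Delta f$, but this is handled by mollification and standard subharmonic-function theory, and in any case in the paper's applications $f$ is taken smooth via the approximation scheme in Proposition \ref{prop:smooth}.
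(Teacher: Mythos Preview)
Your argument is correct. For \eqref{eq:Delta_eps_lim_appendix} you and the paper do essentially the same thing: exploit the symmetry $y\mapsto -y$ to kill the first-order term and then use $\avgint_{\partial B_1(0)} y_iy_j\,\diff y=\frac{1}{\dd}\delta_{ij}$ on the quadratic term. For \eqref{eq:Delta_eps_bound_appendix}, however, your route is genuinely different. The paper writes $\Delta_\epsilon f(x)=\int_0^\epsilon \frac{\diff}{\diff r}\bigl[\avgint_{\partial B_r(0)} f(x+y)\,\diff y\bigr]\diff r$, converts the radial derivative into a solid average of $\Delta f$ via the divergence theorem, bounds that by $\ell$, and integrates in $r$; in effect it reproves the sub-mean-value inequality from scratch. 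You instead subtract the quadratic $\frac{\ell}{2\dd}|y-x|^2$ to reduce to a superharmonic function and invoke the sub-mean-value inequality directly as a black box. Your version is shorter and more conceptual; the paper's is more self-contained and makes the role of the divergence theorem explicit, which matches the spirit of the surrounding finite-difference computations. Both are fully rigorous under the smoothness available from Proposition~\ref{prop:smooth}, as you note.
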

\begin{proof}
To prove \eqref{eq:Delta_eps_lim_appendix} first note that letting $ \nabla^2f$ be the distributional derivative of $f$ we have
\begin{align*}
\lim_{\epsilon\to 0}\frac{f(x+\epsilon y)+f(x-\epsilon y)-2f(x)}{\epsilon^2}=\langle \nabla^2f(x)y,y\rangle,
\end{align*}
in the sense of distributions, where the distribution $\langle \nabla^{2} f(x)y, y \rangle$ is defined as
  \[
\left \langle \langle \nabla^{2} f(x)y, y \rangle, \eta \right \rangle =\int f(x) \langle \nabla^{2} \eta( x) y, y \rangle \diff x,
  \]
for test functions $\eta$. Hence, using the symmetry of $\partial B_{\epsilon}(0)$ under reflection, and changing variables $y\mapsto \epsilon y$,
\begin{align*}
\lim_{\epsilon\to 0}\frac{\Delta_{\epsilon}f(x)}{\epsilon^2}&=\lim_{\epsilon\to 0}\frac{1}{2\epsilon^2}\left\{\avgint_{\partial B_{\epsilon}(0)}2[f(x+y)-f(x)]\diff y\right\}\\
&=\frac{1}{2}\lim_{\epsilon\to 0}\left\{\avgint_{\partial B_{\epsilon}(0)}\frac{f(x+y)+f(x-y)-2f(x)}{\epsilon^2}\diff y\right\}\\
&=\frac{1}{2}\lim_{\epsilon\to 0}\left\{\frac{1}{\epsilon^{\dd-1}}\avgint_{\partial B_{1}(0)}\frac{f(x+\epsilon y)+f(x-\epsilon y)-2f(x)}{\epsilon^2}\epsilon^{\dd-1}\diff y\right\}\\
&=\frac{1}{2}\avgint_{\partial B_{1}(0)}\langle \nabla^2f(x)y,y\rangle \diff y.
\end{align*}
For each $i,j\in [\dd]$, by the symmetry of $\partial B_{\epsilon}(0)$ under $y_j\mapsto -y_j$,
\[
\avgint_{\partial B_{1}(0)}y_iy_j \diff y=\delta_{ij}\avgint_{\partial B_{1}(0)}y_i^2 \diff y=\frac{1}{\dd}. 
\]
It follows that
\begin{align*}
\lim_{\epsilon\to 0}\frac{\Delta_{\epsilon}f(x)}{\epsilon^2}&=\frac{1}{2}\sum_{i,j=1}^{\dd}\avgint_{\partial B_{1}(0)}\partial_{ij}^2f(x)y_iy_j \diff y=\frac{1}{2}\sum_{i=1}^{\dd}\partial_{ii}^2f(x)\avgint_{\partial B_{1}(0)}y_i^2\diff y\\
&=\frac{1}{2\dd}\sum_{i=1}^{\dd}\partial_{ii}^2f(x)=\frac{\Delta f(x)}{2\dd}.
\end{align*}

Next we move to the proof of \eqref{eq:Delta_eps_bound_appendix}. We have
\begin{align*}
\Delta_{\epsilon}f(x)&=\int_0^{\epsilon} \frac{\diff}{\diff r}\left\{\avgint_{\partial  B_{r}(0)}[f(x+y)-f(x)]\diff y\right\}\diff r\\
&=\int_0^{\epsilon} \frac{\diff}{\diff r}\left[\avgint_{\partial  B_{r}(0)}f(x+y)\diff y\right]\diff r.
\end{align*}
Since, by the change of variables $y\mapsto r y$,
\begin{align*}
\avgint_{\partial  B_{r}(0)}f(x+y)\diff y=\avgint_{\partial  B_1(0)}f(x+ry)\diff y,
\end{align*}
we have, by integration by parts,
\begin{align*}
&\frac{\diff}{\diff r}\left[\avgint_{\partial  B_{r}(0)}f(x+y)\diff y\right]=\avgint_{\partial  B_1(0)}\frac{\diff}{\diff r}f(x+ry)\diff y=\avgint_{\partial  B_1(0)}\langle\nabla f(x+ry),y\rangle\diff y\\
&=\avgint_{\partial  B_1(0)}\frac{1}{r}\langle\nabla_y [f(x+ry)],y\rangle\diff y=\avgint_{ B_1(0)}\frac{1}{r}\Delta_y[ f(x+ry)]\diff y\\
&=\frac{r}{\dd}\avgint_{B_1(0)}\Delta f(x+ry)\diff y\le \frac{\ell r}{\dd},
\end{align*}
where the last inequality used $\Delta f\le \ell$. It follows that
\[
\Delta_{\epsilon}f(x)\le \int_0^{\epsilon}r\diff r=\frac{\ell}{\dd}\frac{\epsilon^2}{2}.
\]
\end{proof}

\subsection{Smoothing under Ornstein-Uhlenbeck and heat semigroups}
\label{subsec:appendix_smooth}
In this section we discuss Proposition \ref{prop:OU_smooth}. Since most of the results in the proposition  are standard, when relevant we  will only sketch the arguments and give references for detailed proofs. For the sake of completeness we will prove some additional smoothing properties beyond those stated in Proposition \ref{prop:OU_smooth}.

\begin{proposition}
\label{prop:OU_smooth_appendix}
Let $\Gaussian$ be the standard Gaussian measure on $\R^{\dd}$, and let $f:\R^{\dd}\to \R_{\ge 0}$ be a nonnegative function in $L^1(\Gaussian)$. Let $(\Pheat_t)_{t\ge 0}$ be the Ornstein-Uhlenbeck semigroup,
\begin{equation}
\label{eq:OU_def_prop_appendix}
\Pheat_t\density(x):=\int_{\R^{\dd}} \density(e^{-t}x+\sqrt{1-e^{-2t}}y)\diff \Gaussian(y),\quad  t\ge 0,\quad x\in \R^{\dd},
\end{equation}
and let 
 $(\Hheat_t)_{t\ge 0}$ be the heat semigroup
\begin{equation}
\label{eq:heat_def_prop_appendix}
\Hheat_t\density(x):=\int_{\R^{\dd}} \density(x+\sqrt{t}y)\diff \Gaussian(y),\quad  t\ge 0,\quad x\in \R^{\dd}.
\end{equation}
\begin{enumerate}
\item  For any $x\in \R^{\dd}$ and $t>0$,
\begin{equation}
\label{eq:less_lc_appendix}
\nabla^2\log\Pheat_t\density(x) \succeq -\frac{e^{-2t}}{1-e^{-2t}} \Id_{\dd}\quad\text{and}\quad \nabla^2\log\Hheat_t\density(x) \succeq -\frac{1}{t}\Id_{\dd}. 
\end{equation}
\item Suppose that $\density$ is $\conv$-log-concave for some $\conv\in \R$ (i.e., $x\mapsto \log \density(x)-\conv\frac{|x|^2}{2}$ is concave). Then, for every $x\in \R^{\dd}$,
\begin{equation}
\label{eq:beta_log-concave_OU_appendix}
\nabla^2\log\Pheat_t\density(x)\preceq \frac{ e^{-2t}\conv}{1-\conv(1-e^{-2t})} \Id_{\dd}\begin{cases}
\text{for any }t\in [0,\infty) &\text{if }\conv\le 1\\
\text{for any }t\in \left[0,\log\left(\sqrt{\frac{\conv}{\conv-1}}\right)\right] &\text{if }\conv >1
\end{cases},
\end{equation}
and
\begin{equation}
\label{eq:beta_log-concave_heat_appendix}
\nabla^2\log\Hheat_t\density(x)\preceq \frac{\conv}{1-\conv t}\Id_{\dd}\begin{cases}
\text{for any }t\in [0,\infty) &\text{if }\conv\le 0\\
\text{for any }t\in \left[0,\frac{1}{\conv}\right] &\text{if }\conv >0
\end{cases}.
\end{equation}

\item Suppose that $\density$ is $\conv$-log-convex for some $\conv\le 0$ (i.e., $x\mapsto \log \density(x)-\conv\frac{|x|^2}{2}$ is convex).  Then,  for every $x\in \R^{\dd}$ and $t>0$,
\begin{equation}
\label{eq:beta_cvx_appendix}
\nabla^2\log\Pheat_t\density(x)\succeq \frac{e^{-2t}\conv}{1-\conv(1-e^{-2t})} \Id_{\dd}\quad\text{and}\quad \nabla^2\log\Hheat_t\density(x) \succeq \frac{\conv}{1-\conv t}\Id_{\dd}.
\end{equation}
Suppose $\density$ is $\conv\dd$-log-subharmonic for some $\conv\le 0$ (i.e.,  $x\mapsto \log f(x)-\conv\frac{|x|^2}{2}$ is subharmonic). Then, for every $x\in \R^{\dd}$ and $t>0$,
\begin{equation}
\label{eq:beta_subharmonic_appendix}
\Delta\log\Pheat_t\density(x)\ge \frac{e^{-2t}\conv\dd}{1-\conv(1-e^{-2t})}\quad\text{and}\quad \Delta \log\Hheat_t\density(x) \ge  \frac{\conv\dd}{1-\conv t}.
\end{equation}
\end{enumerate}
\end{proposition}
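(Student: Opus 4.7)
The plan is to reduce all three parts of the proposition to a single master identity for the heat semigroup $\Hheat_s\density$, and then to exploit it via Brascamp--Lieb and Cram\'er--Rao type bounds applied to a naturally associated tilted probability measure.

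A change of variables shows that $\Pheat_t\density(x) = \Hheat_s\density(e^{-t}x)$ with $s := 1-e^{-2t}$, so $\nabla^2\log\Pheat_t\density(x) = e^{-2t}\,(\nabla^2\log\Hheat_s\density)(e^{-t}x)$. Thus it suffices to prove the heat-semigroup statements and then substitute $s = 1-e^{-2t}$ in the Ornstein--Uhlenbeck case. For fixed $y\in\R^{\dd}$ introduce the tilted probability measure
\[
\mu_y(\diff z)\propto e^{-W_y(z)}\diff z,\qquad W_y(z):=\tfrac{|y-z|^2}{2s}-\log\density(z).
\]
Since $\log\Hheat_s\density(y)$ differs from $\log\int e^{-W_y(z)}\diff z$ by an additive constant, differentiating twice under the integral yields the master identity
\[
\nabla^2\log\Hheat_s\density(y) = -\tfrac{1}{s}\Id_{\dd} + \tfrac{1}{s^2}\Cov(\mu_y),
\]
which reduces every claim to a bound on $\Cov(\mu_y)$.

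For (1), the trivial bound $\Cov(\mu_y)\succeq 0$ gives the universal lower bound. For (2), the hypothesis $\nabla^2\log\density \preceq \conv\Id_{\dd}$ yields $\nabla^2 W_y \succeq \tfrac{1-\conv s}{s}\Id_{\dd}$ pointwise; whenever $\conv s <1$ the measure $\mu_y$ is thereby strongly log-concave and the Brascamp--Lieb inequality gives $\Cov(\mu_y)\preceq \tfrac{s}{1-\conv s}\Id_{\dd}$. The time restrictions in \eqref{eq:beta_log-concave_OU_appendix}--\eqref{eq:beta_log-concave_heat_appendix} are simply $\conv s < 1$ rewritten in terms of $t$. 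For the Hessian statement of (3), the bound $\nabla^2\log\density\succeq \conv\Id_{\dd}$ with $\conv\le 0$ gives $\nabla^2 W_y\preceq \tfrac{1-\conv s}{s}\Id_{\dd}$, so the Stein-type identity $\mathbb{E}_{\mu_y}[\nabla W_y \nabla W_y^\top] = \mathbb{E}_{\mu_y}[\nabla^2 W_y]$ (valid thanks to the Gaussian tail of $\mu_y$) combined with the multivariate Cram\'er--Rao inequality yields $\Cov(\mu_y)\succeq (\mathbb{E}_{\mu_y}[\nabla^2 W_y])^{-1}\succeq \tfrac{s}{1-\conv s}\Id_{\dd}$, which is exactly the claim.

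The Laplacian statement in (3) is the main obstacle, since the pointwise Hessian of $\log\density$ is not controlled --- only its trace is. Taking trace in the master identity gives $\Delta\log\Hheat_s\density(y) = -\tfrac{\dd}{s} + \tfrac{1}{s^2}\Tr[\Cov(\mu_y)]$, so the goal becomes $\Tr[\Cov(\mu_y)]\ge \tfrac{\dd s}{1-\conv s}$. The hypothesis $\Delta\log\density\ge \conv\dd$ yields $\Tr(\nabla^2 W_y)\le \tfrac{\dd(1-\conv s)}{s}$ pointwise in $z$, and hence $\Tr(\mathbb{E}_{\mu_y}[\nabla^2 W_y])\le\tfrac{\dd(1-\conv s)}{s}$. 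Using Cram\'er--Rao once more together with the AM--HM inequality applied to the eigenvalues of the positive semidefinite matrix $\mathbb{E}_{\mu_y}[\nabla^2 W_y]$,
\[
\Tr[\Cov(\mu_y)]\ge \Tr\!\bigl[(\mathbb{E}_{\mu_y}[\nabla^2 W_y])^{-1}\bigr]\ge \frac{\dd^2}{\Tr(\mathbb{E}_{\mu_y}[\nabla^2 W_y])}\ge \frac{\dd s}{1-\conv s},
\]
which yields $\Delta\log\Hheat_s\density(y)\ge \tfrac{\conv\dd}{1-\conv s}$. The secondary bound $\ge\conv\dd$ in \eqref{eq:beta_subharmonic_appendix} follows from $\conv\le 0$, $e^{-2t}\le 1$, and $1-\conv(1-e^{-2t})\ge 1$. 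The main technical point to be handled is the justification of the Stein identity and Cram\'er--Rao for $\mu_y$ --- which is not assumed log-concave in this setting --- together with the nonsingularity of $\mathbb{E}_{\mu_y}[\nabla^2 W_y]$; this will be done by first regularizing $\density$ (for example replacing it by $\Pheat_\epsilon\density$, which preserves the log-subharmonicity hypothesis while providing smoothness and controlled growth) and then passing to the limit.
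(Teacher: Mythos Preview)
Your proposal is correct. For parts (1) and (2) you follow exactly one of the paper's proofs: the covariance identity $\nabla^2\log\Hheat_s\density(y)=-\tfrac{1}{s}\Id_{\dd}+\tfrac{1}{s^2}\Cov(\mu_y)$ combined respectively with $\Cov\succeq 0$ and with Brascamp--Lieb.

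For part (3) your route is genuinely different. The paper abandons the covariance identity there and instead gives two proofs: (i) Hamilton's maximum principle applied to the parabolic equation $\partial_t M=\tfrac12\Delta M+L(M)+M^2$ satisfied by $M=\nabla^2\log\Hheat_t\density$, using $\Tr[M^2]\ge\Tr[M]^2/\dd$ to handle the trace version; and (ii) an explicit change of variables (Lemma~\ref{lem:Ptf_identity}) that peels off the quadratic part of $\log\density$ and reduces to the fact that mixtures of log-convex (resp.\ log-subharmonic) functions remain log-convex (resp.\ log-subharmonic). You instead stay entirely within the covariance framework by pairing the Cram\'er--Rao lower bound $\Cov(\mu_y)\succeq(\mathbb E_{\mu_y}[\nabla^2 W_y])^{-1}$ with the AM--HM inequality on eigenvalues. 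This is a clean argument that unifies all three parts under a single identity; the paper's maximum-principle proof has the complementary advantage of extending to curved settings.

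One caveat on your regularization step: replacing $\density$ by $\Pheat_\epsilon\density$ and asserting that this ``preserves the log-subharmonicity hypothesis'' is exactly the statement you are proving, so as written the argument is circular. You should instead mollify $\log\density-\conv\tfrac{|x|^2}{2}$ directly (convolution with a smooth probability kernel preserves subharmonicity since the Laplacian commutes with convolution), or else invoke the mixture-of-log-subharmonic fact as an independent input before running the Cram\'er--Rao argument.
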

\begin{remark}
It was shown in \cite[Remark]{MR4632741} that there exists a $0$-log-superharmonic $\density$ such that, for any $t>0$, $\Pheat_t\density$ is strictly log-subharmonic. Concretely, let $\dd=2$ and $\density(x)=\density(x_1,x_2):=e^{x_1x_2}$.  Then, $\Delta\log \density(x)=0$ for all $x\in \R^{\dd}$ while $\Delta\log \Pheat_t\density(x)>0$ for all $x\in\R^{\dd}$ and $t>0$. We conclude that log-superharmonic functions are not preserved under the Ornstein-Uhlenbeck/heat semigroups.
\end{remark}

\begin{proof}
We will present a number of proofs for the various parts in Proposition \ref{prop:OU_smooth_appendix}. Since the Ornstein-Uhlenbeck semigroup and heat semigroup are related by the change of variables
\begin{equation}
\label{eq:OU_heat}
\Pheat_t\density(x)=\Hheat_{1-e^{-2t}}\density (e^{-t}x),
\end{equation}
we will use  whichever one is convenient for the specific proof. The most robust proof technique (which works also on manifolds) is due to Hamilton \cite{hamilton1993matrix, MR4370325} based on deriving a partial differential equation for $M(t,x):=\nabla^2\log\Hheat_t\density(x)$, and then using the maximum principle. Standard computation (e.g. \cite[Proposition 2.11]{muller2006differential}) shows that
\begin{equation}
\label{eq:PDE_Hessian}
\partial_tM(t,x)=\frac{1}{2}\Delta M(t,x)+L(M(t,x))+M^2(t,x),
\end{equation} 
where $L(M(t,x))$ is a \emph{linear} differential operator in $M(t,x)$. Below we will apply the maximum principle to $(t,x)\mapsto M(t,x)$ to deduce Proposition \ref{prop:OU_smooth_appendix}(1-3). 

Another proof technique, which is more probabilistic  in nature, uses the specific form of the Ornstein-Uhlenbeck/heat semigroup in Euclidean space. Specifically, it is based on the following covariance identity 
\begin{equation}
\label{eq:cov_identity}
\nabla^2\log\Pheat_t\density(x)=\frac{e^{-2t}}{1-e^{-2t}}\left(\frac{\Cov\left[p_{e^{-t}x,1-e^{-2t}}\right]}{1-e^{-2t}}-\Id_{\dd}\right), \quad \forall~x\in\R^{\dd},~\forall~ t>0,
\end{equation}
where $\Cov\left[p_{z,s}\right]$ is the covariance matrix of the probability measure
\begin{equation}
\label{eq:conditional_measure}
\diff p_{z,s}(y)\propto \density(y)e^{-\frac{|y-z|^2}{2s}}\diff y.
\end{equation}
The identity \eqref{eq:cov_identity}  is standard, e.g.,  \cite[Equation (3.2)]{MR4797372}, \cite[Equation (3.4)]{MR4748461}.\\

\noindent\textbf{Proofs of Proposition \ref{prop:OU_smooth_appendix}(1).} 

\noindent\textbf{Proof 1.} This is Hamilton's matrix inequality  \cite{MR4370325} which is proven via the maximum principle. Indeed, by \eqref{eq:PDE_Hessian}, since  the smallest eigenvalue \(\lambda\) is a concave function of   \(M(x,t)\), it satisfies
\begin{equation}
  \label{eq:min_eigenvalue}
\partial_{t}\lambda(x,t)\ge \frac{1}{2}\Delta \lambda (x,t) +L(\lambda (x,t))+ \lambda^{2}(x,t).
\end{equation}
Hence, $\lambda(x,t)\ge g(t)$  where \(g(t):=-1/t\) solves the equation,
\[
\partial_tg(t)=g^{2}(t), \qquad g(0)=-\infty.
\]
\noindent\textbf{Proof 2.} Use \eqref{eq:cov_identity} and $\Cov\left[p_{e^{-t}x,e^{-2t}}\right]\succeq 0$.

\noindent\textbf{Proof 3.} Consequence of the intrinsic dimensional local logarithmic Sobolev inequality: The term inside the logarithm in \cite[Equation 29]{MR4698563} must be nonnegative. (This is analogous to the way in which the Li-Yau inequality is deduced from the dimensional local logarithmic Sobolev inequality \cite{MR3612336}.)

\noindent\textbf{Proof 4.} Mixture of log-convex densities is log-convex: For any fixed $y\in \R^{\dd}$ the function $x\mapsto \frac{e^{-2t}}{1-e^{-2t}}\frac{|x|^2}{2}+\log \Pheat_t\delta_y(x)$ is convex, where $\delta_y$ is a point mass at $y$. Hence, the mixture $x\mapsto\frac{e^{-2t}}{1-e^{-2t}}\frac{|x|^2}{2}+\log \int\density(y)\Pheat_t\delta_y(x)\diff y$ is also convex for any probability density $\density:\R^{\dd}\to \R_{\ge 0}$. The proof is complete since $ \int\density(y)\Pheat_t\delta_y(x)\diff y=\Pheat_t\density(x)$ \cite[Lemma 1.3, Appendix]{MR3782065}. \\

\noindent\textbf{Proofs of Proposition \ref{prop:OU_smooth_appendix}(2).} 

\noindent\textbf{Proof 1.}  We will use the maximum principle. Let $\Lambda(t,x)$ be the maximal eigenvalue of $M(t,x):=\nabla^2\log\Hheat_t\density(x)$. Since the maximal eigenvalue $\Lambda$ is a convex function of  $M$, \eqref{eq:PDE_Hessian} implies that
\begin{equation}
\label{eq:eigen_min}
\partial_t\Lambda(t,x)\le \frac{1}{2}\Delta \Lambda(t,x)+L(\Lambda(t,x))+\Lambda^2(t,x).
\end{equation}
Fix $x\in \R^{\dd}$. Then,
\begin{equation*}
\Lambda(t,x)\le g(t)\quad\text{where $g$ solves the equation} \quad \partial_tg(t)=g^2(t),\quad g(0)=\Lambda(0,x),
\end{equation*}
for every $t$ for which $g$ is well-defined. Since the solution of the ordinary differential equation for $g$ is $g(t)=\frac{\Lambda(0,x)}{1-\Lambda(0,x)t}$ for all $t$ where the denominator does not vanish, we see that if $\Lambda(0,x)\le 0$, then $g$ is well-defined for all $t\ge 0$, and if $\Lambda(0,x)> 0$, then $g$ is well-defined for all $t\in \left[0,\frac{1}{\Lambda(0,x)}\right)$. Hence, since $\Lambda(0,x)\le \conv$,
\begin{equation*}
\Lambda(t,x)\le\frac{\Lambda(0,x)}{1-\Lambda(0,x)t}
\begin{cases}
\text{for any }t\in [0,\infty) &\text{if }\conv\le 0\\
\text{for any }t\in \left[0,\frac{1}{\conv}\right] &\text{if }\conv >0,
\end{cases}
\end{equation*}
which completes the proof of \eqref{eq:beta_log-concave_heat_appendix}. 

\noindent\textbf{Proof 2.} We will use the covariance identity  following the argument in \cite[Lemma 3.4(2)]{MR4797372}. If $\density$ is $\conv$-log-concave then the measure $p_{z,s}$ in \eqref{eq:conditional_measure} is $\left(\frac{1}{s}-\conv\right)$-log-concave. In particular, $p_{e^{-t}x,1-e^{-2t}}$ is $\left(\frac{1}{1-e^{-2t}}-\conv\right)$-log-concave so by the Brascamp-Lieb inequality \cite{MR450480}, as long as $\left(\frac{1}{1-e^{-2t}}-\conv\right)\ge 0$,
\[
\Cov\left[p_{e^{-t}x,1-e^{-2t}}\right] \preceq \left(\frac{1}{1-e^{-2t}}-\conv\right)^{-1}\Id_{\dd}.
\]
Hence, by \eqref{eq:cov_identity}, 
\begin{equation*}
\nabla^2\log\Pheat_t\density(x)\preceq \frac{\conv e^{-2t}}{1-\conv(1-e^{-2t})} \Id_{\dd}
\begin{cases}
\text{for any }t\in [0,\infty) &\text{if }\conv\le 1\\
\text{for any }t\in \left[0,\log\left(\sqrt{\frac{\conv}{\conv-1}}\right)\right] &\text{if }\conv >1,
\end{cases}
\end{equation*}
which proves \eqref{eq:beta_log-concave_OU_appendix}.

\noindent\textbf{Proof 3.} We will use the Pr\'ekopa-Leindler inequality. We need to show that $x\mapsto\Hheat_t\density(x)e^{-c_t\frac{|x|^2}{2}}$ is log-concave were $c_t:=\frac{\conv}{1-\conv t}$. We have
\begin{align*}
&\Hheat_t\density(x)e^{-\frac{\conv}{1-\conv t}\frac{|x|^2}{2}}=\int \density(z)e^{-\frac{|z-x|^2}{2t}}e^{-\frac{\conv}{1-\conv t}\frac{|x|^2}{2}}\diff z\\
&=\int \density(z)\exp\left(-\frac{1}{2}\left[\frac{1}{t(1+tc_t)}\left|z-(1+tc_t)x\right|^2+\frac{c_t}{1+tc_t}|z|^2\right]\right)\diff z=\Hheat_{t(1+tc_t)}\density_{c_t}((1+tc_t)x),
\end{align*} 
where $\density_{c_t}(z)=\density(z)e^{-\frac{1}{2}\frac{c_t}{1+tc_t}|z|^2}$, as long as $1+tc_t\ge 0$. Since $\density$ is $\conv$-log-concave, the function $\density_{c_t}$  is  $0$-log-concave as $\conv-\frac{c_t}{1+tc_t}=\conv-\frac{\frac{\conv}{1-\conv t}}{1+t\frac{\conv}{1-\conv t}}=0$. Hence, $\Hheat_{t(1+tc_t)}\density_{c_t}$ is log-concave because it is the convolution of log-concave functions,  which is log-concave by the Pr\'ekopa-Leindler inequality \cite[\S 9]{gardner2002brunn}. Hence, as long as $1+tc_t\ge 0$, $\Hheat_t\density(x)e^{-\frac{\conv}{1-\conv t}\frac{|x|^2}{2}}$ is log-concave, which implies \eqref{eq:beta_log-concave_heat_appendix}. \\

\noindent\textbf{Proofs of Proposition \ref{prop:OU_smooth_appendix}(3).} 

\noindent\textbf{Proof 1.}  We will use the maximum principle. Recall that the minimum eigenvalue  $\lambda(t,x)$ of $M(t,x):=\nabla^2\log\Hheat_t\density(x)$ satisfies \eqref{eq:min_eigenvalue}. Hence,   $\lambda(t,x)\ge g(t)$ where $g$ solves the equation $\partial_tg(t)=g^2(t)$ with $g(0)=\lambda(0,x)$.  Since the solution of the ordinary differential equation for $g$ is $g(t)=\frac{\lambda(0,x)}{1-\lambda(0,x)t}$ for all $t$ where the denominator  does not vanish, we get that if $\lambda(0,x)\le 0$, then, for all $t\ge 0$, $\lambda(t,x)\ge\frac{\lambda(0,x)}{1-\lambda(0,x)t}$. Since $\lambda(0,x)\ge \conv$ the proof of \eqref{eq:beta_cvx_appendix} is complete.

To prove \eqref{eq:beta_subharmonic_appendix} let $m(t,x):=\Tr[M(t,x)]$, and note that, by $\Tr[M^2(t,x)]\ge \frac{\Tr[M(t,x)]^2}{\dd}$, \eqref{eq:PDE_Hessian} implies that
\begin{equation}
\label{eq:eigen_min}
\partial_tm(t,x)\ge \frac{1}{2}\Delta m(t,x)+L(m(t,x))+\frac{m^2(t,x)}{\dd}.
\end{equation}
Again, this implies that, for any fixed $x\in \R^{\dd}$ and $t\ge 0$,  $m(t,x)\ge \frac{m(0,x)}{1-\frac{m(0,x)}{\dd}t}$. Since $m(t,x)\ge\conv\dd$ the proof of  \eqref{eq:beta_subharmonic_appendix} is complete

\noindent\textbf{Proof 2.} The proof is based on the fact that a mixture of log-convex (res. log-subharmonic) functions is log-convex (res. log-subharmonic): The argument we present below is based on the proof of \cite[Lemma 5]{MS2022}  which treats the $\conv$-log-convex case. Here we adapt the proof also to the $\conv\dd$-log-subharmonic case. Our starting point is the following identity for the action of Ornstein-Uhlenbeck semigroup. The point of this identity to separate the effect of the semigroup on the quadratic part of the function. 
\begin{lemma}
\label{lem:Ptf_identity}
Fix $\beta\ge 0$ and let $\density(x)=e^{-R(x)-\beta\frac{|x|^2}{2}}$. Let $(\Pheat_t)$ be the Ornstein-Uhlenbeck semigroup \eqref{eq:OU_def_prop_appendix}. Then, 
\begin{align}
\label{eq:P_t_rep_quadratic}
\Pheat_t\density(x)=e^{-\frac{\beta e^{-2t}}{1+(1-e^{-2t})\beta}\frac{|x|^2}{2}}\int e^{-R_{t,z}(x)}e^{-\frac{\beta}{2}|z|^2}\diff z,
\end{align}
where
\begin{align}
\label{eq:R_tz_def}
R_{t,z}(x):=(2\pi)^{-\frac{\dd}{2}}(1-e^{-2t}+\beta^{-1})^{-\frac{\dd}{2}}R\left(\frac{\sqrt{1-e^{-2t}}}{\sqrt{1-e^{-2t}+\beta^{-1}}}z+\frac{e^{-t}}{1+\beta(1-e^{-2t})}x\right).
\end{align}
\end{lemma}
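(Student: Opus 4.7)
The plan is to prove the identity by direct computation, starting from the definition of the Ornstein--Uhlenbeck semigroup, substituting the explicit form of $\density(x)=e^{-R(x)-\beta|x|^2/2}$, completing the square in the Gaussian weight, and finally performing an affine change of variables to recognize the Gaussian density $e^{-\beta|z|^2/2}\,dz$ on the right.

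First, I will write
\[
\Pheat_t\density(x)=(2\pi)^{-\dd/2}\int e^{-R(e^{-t}x+\sqrt{1-e^{-2t}}y)} e^{-\frac{\beta}{2}|e^{-t}x+\sqrt{1-e^{-2t}}y|^{2}} e^{-|y|^{2}/2}\diff y
\]
and expand the quadratic $-\frac{\beta}{2}|e^{-t}x+\sqrt{1-e^{-2t}}y|^{2}-\frac{1}{2}|y|^{2}$. Setting $a:=1+\beta(1-e^{-2t})$, completing the square in $y$ rewrites the exponent as
\[
-\tfrac{\beta e^{-2t}}{2a}|x|^{2}-\tfrac{a}{2}\bigl|y+\tfrac{\beta e^{-t}\sqrt{1-e^{-2t}}}{a}x\bigr|^{2},
\]
after a short algebraic reduction. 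This isolates the factor $\exp\bigl(-\tfrac{\beta e^{-2t}}{1+(1-e^{-2t})\beta}\tfrac{|x|^{2}}{2}\bigr)$ that appears in \eqref{eq:P_t_rep_quadratic}.

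Next, I substitute $y=\sqrt{\beta/a}\,(z-\sqrt{\beta/a}\,e^{-t}\sqrt{1-e^{-2t}}\,x)$, chosen so that the residual Gaussian in $y$ becomes exactly $e^{-\beta|z|^{2}/2}$. The Jacobian is $(\beta/a)^{\dd/2}=(1-e^{-2t}+\beta^{-1})^{-\dd/2}$, which combined with the $(2\pi)^{-\dd/2}$ prefactor gives precisely the constant displayed in \eqref{eq:R_tz_def}. I then compute the argument of $R$ under this substitution: the coefficient of $z$ becomes $\sqrt{(1-e^{-2t})/(1-e^{-2t}+\beta^{-1})}$, and the coefficient of $x$ telescopes to $e^{-t}[1-\beta(1-e^{-2t})/a]=e^{-t}/(1+\beta(1-e^{-2t}))$, matching the two affine coefficients in the definition of $R_{t,z}(x)$.

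There is no real obstacle here; the statement is just a parametrized completion-of-the-square identity. The only thing to be careful about is bookkeeping: making sure the normalization factor $(2\pi)^{-\dd/2}(1-e^{-2t}+\beta^{-1})^{-\dd/2}$ is properly accounted for through the change of variables, and that the affine coefficients simplify to the stated expressions. Once these algebraic checks are performed, identity \eqref{eq:P_t_rep_quadratic} follows directly, with the effect of $\Pheat_t$ on the quadratic part $e^{-\beta|x|^{2}/2}$ cleanly separated from its action on the residual factor $e^{-R}$ via the rescaled Gaussian integral against $e^{-\beta|z|^{2}/2}\diff z$.
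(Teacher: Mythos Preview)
Your proposal is correct and follows essentially the same approach as the paper: write out the definition of $\Pheat_t\density$, expand the quadratic, complete the square in $y$ (your shorthand $a=1+\beta(1-e^{-2t})$ makes this tidier, but the algebra is identical), and perform the affine change of variables $z=\sqrt{a/\beta}\,y+\sqrt{\beta/a}\,e^{-t}\sqrt{1-e^{-2t}}\,x$, which is exactly the substitution the paper uses. The Jacobian and the coefficients of $x$ and $z$ in the argument of $R$ all match, so nothing further is needed.
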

\begin{proof}
 By definition,
\begin{align*}
\Pheat_t\density(x)&=\int \density(e^{-t}x+\sqrt{1-e^{-2t}}y)\diff \Gaussian(y)\\
&=\frac{1}{(2\pi)^{\frac{\dd}{2}}}\int e^{-R(e^{-t}x+\sqrt{1-e^{-2t}}y)-\beta\frac{|e^{-t}x+\sqrt{1-e^{-2t}}y|^2}{2}-\frac{|y|^2}{2}}\diff y\\
&=\frac{e^{-\beta e^{-2t}\frac{|x|^2}{2}}}{(2\pi)^{\frac{\dd}{2}}}\int e^{-R(e^{-t}x+\sqrt{1-e^{-2t}}y)-\beta\frac{2\langle e^{-t}x,\sqrt{1-e^{-2t}}y\rangle +(1-e^{-2t}+\beta^{-1})|y|^2}{2}}\diff y\\
&=\frac{e^{-\beta e^{-2t}\frac{|x|^2}{2}}}{(2\pi)^{\frac{\dd}{2}}}\int e^{-R(e^{-t}x+\sqrt{1-e^{-2t}}y)-\frac{\beta}{2}\left|\sqrt{1-e^{-2t}+\beta^{-1}}y+\frac{\sqrt{1-e^{-2t}}e^{-t}}{\sqrt{1-e^{-2t}+\beta^{-1}}}x\right|^2}e^{-\frac{-(1-e^{-2t})e^{-2t}}{1-e^{-2t}+\beta^{-1}}\frac{\beta|x|^2}{2}}\diff y\\
&=\frac{e^{-\left(1-\frac{1-e^{-2t}}{1-e^{-2t}+\beta^{-1}}\right)\frac{e^{-2t}\beta}{2}|x|^2}}{(2\pi)^{\frac{\dd}{2}}}\int e^{-R(e^{-t}x+\sqrt{1-e^{-2t}}y)-\frac{\beta}{2}\left|\sqrt{1-e^{-2t}+\beta^{-1}}y+\frac{\sqrt{1-e^{-2t}}e^{-t}}{\sqrt{1-e^{-2t}+\beta^{-1}}}x\right|^2}\diff y\\
&=\frac{e^{-\frac{\beta e^{-2t}}{1+(1-e^{-2t})\beta}\frac{|x|^2}{2}}}{(2\pi)^{\frac{\dd}{2}}}\int e^{-R(e^{-t}x+\sqrt{1-e^{-2t}}y)-\frac{\beta}{2}\left|\sqrt{1-e^{-2t}+\conv^{-1}}y+\frac{\sqrt{1-e^{-2t}}e^{-t}}{\sqrt{1-e^{-2t}+\beta^{-1}}}x\right|^2}\diff y.
\end{align*}

Let $z=\sqrt{1-e^{-2t}+\beta^{-1}}y+\frac{\sqrt{1-e^{-2t}}e^{-t}}{\sqrt{1-e^{-2t}+\beta^{-1}}}x$, so that $\diff y=(1-e^{-2t}+\beta^{-1})^{-\frac{\dd}{2}}\diff z$, and change variables to get
\begin{align*}
\Pheat_t\density(x)&=\frac{e^{-\frac{\beta e^{-2t}}{1+(1-e^{-2t})\beta}\frac{|x|^2}{2}}}{(2\pi)^{\frac{\dd}{2}}}\int e^{-R\left(\frac{\sqrt{1-e^{-2t}}}{\sqrt{1-e^{-2t}+\beta^{-1}}}z+\frac{e^{-t}}{1+\beta(1-e^{-2t})}x\right)-\frac{\beta}{2}|z|^2}(1-e^{-2t}+\beta^{-1})^{-\frac{\dd}{2}}\diff z\\
&=e^{-\frac{\beta e^{-2t}}{1+(1-e^{-2t})\beta}\frac{|x|^2}{2}}\int e^{-R_{t,z}(x)}e^{-\frac{\beta}{2}|z|^2}\diff z,
\end{align*}
where
\[
R_{t,z}(x):=(2\pi)^{-\frac{\dd}{2}}(1-e^{-2t}+\beta^{-1})^{-\frac{\dd}{2}}R\left(\frac{\sqrt{1-e^{-2t}}}{\sqrt{1-e^{-2t}+\beta^{-1}}}z+\frac{e^{-t}}{1+\beta(1-e^{-2t})}x\right).
\]
\end{proof}
Let us now complete the proof of Proposition \ref{prop:OU_smooth_appendix}(3) using \eqref{eq:P_t_rep_quadratic}. Given $\density$ which is $\conv$-log-convex (res. $\conv\dd$-log-subharmonic) let $R(x):=-\log\density (x)-|\conv|\frac{|x|^2}{2}$. Then $R$ is concave (res. superharmonic), so with $R_{t,z}$ as in \eqref{eq:R_tz_def} we have that $x\mapsto e^{-R_{t,z}(x)}$ is log-convex (res. log-subharmonic). Since the mixture of log-convex functions is log convex \cite[Chapter 16.B]{marshall2011inequalities} (res. the mixture of log-subharmonic functions is log-subharmonic \cite[Proposition 2.2]{MR2578455}), it follows that $x\mapsto \int e^{-R_{t,z}(x)}e^{-\frac{|\conv|}{2}|z|^2}\diff z$ is log-convex (res. log-subharmonic). From \eqref{eq:P_t_rep_quadratic} we see that $\Pheat_t\density$ is $\frac{\conv e^{-2t}}{1-\conv(1-e^{-2t})}$-log-convex (res. $\dd\frac{\conv e^{-2t}}{1-\conv(1-e^{-2t})}$-log-subharmonic).
\end{proof}

\bibliographystyle{amsplain0}
\bibliography{ref_majorization_transport}

\end{document}